\documentclass[10pt]{article}
\usepackage{latexsym,amssymb,amsmath}
\usepackage{amssymb,amsthm,upref,amscd}
\everymath{\displaystyle}
\usepackage{color}
\usepackage[T1]{fontenc}
\textwidth 15cm \textheight 22.5cm \oddsidemargin -0.0cm
\evensidemargin -0.0cm \topmargin -1.5cm

\begin{document}
\baselineskip=13pt

\numberwithin{equation}{section}

\pretolerance1000
\newtheorem{theorem}{Theorem}[section]
\newtheorem{lemma}[theorem]{Lemma}
\newtheorem{proposition}[theorem]{Proposition}
\newtheorem{corollary}[theorem]{Corollary}
\newtheorem{remark}[theorem]{Remark}
\newtheorem{definition}[theorem]{Definition}
\newtheorem{claim}[theorem]{Claim}
\renewcommand{\theequation}{\thesection.\arabic{equation}}

\newcommand{\A}{\mathbb{A}}
\newcommand{\B}{\mathbb{B}}
\newcommand{\C}{\mathbb{C}}
\newcommand{\D}{\mathbb{D}}
\newcommand{\E}{\mathbb{E}}
\newcommand{\F}{\mathbb{F}}
\newcommand{\G}{\mathbb{G}}
\newcommand{\I}{\mathbb{I}}
\newcommand{\J}{\mathbb{J}}
\newcommand{\K}{\mathbb{K}}
\newcommand{\M}{\mathbb{M}}
\newcommand{\N}{\mathbb{N}}
\newcommand{\Q}{\mathbb{Q}}
\newcommand{\R}{\mathbb{R}}
\newcommand{\T}{\mathbb{T}}
\newcommand{\U}{\mathbb{U}}
\newcommand{\V}{\mathbb{V}}
\newcommand{\W}{\mathbb{W}}
\newcommand{\X}{\mathbb{X}}
\newcommand{\Y}{\mathbb{Y}}
\newcommand{\Z}{\mathbb{Z}}
\newcommand\ca{\mathcal{A}}
\newcommand\cb{\mathcal{B}}
\newcommand\cc{\mathcal{C}}
\newcommand\cd{\mathcal{D}}
\newcommand\ce{\mathcal{E}}
\newcommand\cf{\mathcal{F}}
\newcommand\cg{\mathcal{G}}
\newcommand\ch{\mathcal{H}}
\newcommand\ci{\mathcal{I}}
\newcommand\cj{\mathcal{J}}
\newcommand\ck{\mathcal{K}}
\newcommand\cl{\mathcal{L}}
\newcommand\cm{\mathcal{M}}
\newcommand\cn{\mathcal{N}}
\newcommand\co{\mathcal{O}}
\newcommand\cp{\mathcal{P}}
\newcommand\cq{\mathcal{Q}}
\newcommand\rr{\mathcal{R}}
\newcommand\cs{\mathcal{S}}
\newcommand\ct{\mathcal{T}}
\newcommand\cu{\mathcal{U}}
\newcommand\cv{\mathcal{V}}
\newcommand\cw{\mathcal{W}}
\newcommand\cx{\mathcal{X}}
\newcommand\ocd{\overline{\cd}}

\def\c{\centerline}
\def\ov{\overline}
\def\emp {\emptyset}
\def\pa {\partial}
\def\bl{\setminus}
\def\op{\oplus}
\def\sbt{\subset}
\def\un{\underline}
\def\al {\alpha}
\def\bt {\beta}
\def\de {\delta}
\def\Ga {\Gamma}
\def\ga {\gamma}
\def\lm {\lambda}
\def\Lam {\Lambda}
\def\om {\omega}
\def\Om {\Omega}
\def\sa {\sigma}
\def\vr {\varepsilon}
\def\va {\varphi}

\title{\bf Multi-bump solutions for Choquard equation with deepening potential well}

\author { Claudianor O. Alves$^a$\thanks{C. O. Alves was partially supported by CNPq/Brazil
 301807/2013-2 and INCT-MAT, coalves@mat.ufcg.edu.br}\, , \, \ Al\^annio B. N\'obrega$^a$\thanks{alannio@mat.ufcg.edu.br}\, , \,    Minbo Yang$^b$\thanks{M. Yang is the corresponding author, he was  partially supported by NSFC (11571317, 11271331) and ZJNSF(LY15A010010), mbyang@zjnu.edu.cn}\vspace{2mm}
\and {\small $a.$ Universidade Federal de Campina Grande} \\ {\small Unidade Acad\^emica de Matem\'{a}tica} \\ {\small CEP: 58429-900, Campina Grande - Pb, Brazil}\\
{\small $b.$ Department of Mathematics, Zhejiang Normal University} \\ {\small Jinhua, 321004, P. R. China.}}

\date{}
\maketitle

\begin{abstract}
In this paper we study the existence of multi-bump solutions for the following Choquard equation
$$
\begin{array}{ll}
-\Delta u + (\lambda a(x)+1)u=\displaystyle\big(\frac{1}{|x|^{\mu}}\ast |u|^p\big)|u|^{p-2}u  \mbox{ in } \,\,\, \R^3,
\end{array}
$$
where $\mu \in (0,3), p\in(2, 6-\mu)$, $\lambda$ is a positive parameter and the nonnegative continuous function $a(x)$ has a potential well $ \Omega:=int (a^{-1}(0))$  which possesses $k$ disjoint bounded components $ \Omega:=\cup_{j=1}^{k}\Omega_j$. We prove that if the parameter $\lambda$ is large enough, then the equation has at least $2^{k}-1$ multi-bump solutions.

 \vspace{0.3cm}

\noindent{\bf Mathematics Subject Classifications (2010):} 35J20,
35J65

\vspace{0.3cm}

 \noindent {\bf Keywords:}  Choquard equation, multi-bump
solution, variational methods.
\end{abstract}

\section{Introduction}
The nonlinear Choquard equation
\begin{equation}\label{Nonlocal.S1}
 -\Delta u +V(x)u =\Big(\frac{1}{|x|^{\mu}}\ast|u|^{p}\Big)|u|^{p-2}u  \quad \mbox{in} \quad \R^3,
\end{equation}
$p=2$ and $\mu=1$, goes
back to the description of the quantum theory of a polaron at rest by S. Pekar in 1954 \cite{P1}
and the modeling of an electron trapped
in its own hole in 1976 in the work of P. Choquard, as a certain approximation to Hartree-Fock theory of one-component
plasma \cite{L1}. In some particular cases, this equation is also known as the Schr\"{o}dinger-Newton equation, which was introduced by Penrose in his discussion on the selfgravitational collapse of a quantum mechanical wave function  \cite{Pe}.

The existence and qualitative properties of solutions of \eqref{Nonlocal.S1} have been widely studied in the last decades. In \cite{L1}, Lieb proved the existence and uniqueness, up to translations,
of the ground state. Later, in \cite{Ls}, Lions showed the existence of
a sequence of radially symmetric solutions. In \cite{CCS1, ML,  MS1} the authors showed the regularity, positivity
and radial symmetry of the ground states and
derived decay property at infinity as well. Moreover, Moroz and Van
Schaftingen in \cite{MS2} considered  the existence of ground states under the assumptions of Berestycki-Lions type. When $V$ is a continuous periodic function with $\inf_{\R^3} V(x)> 0$, noticing that the nonlocal term is invariant under translation, we can obtain easily the existence result  by applying the Mountain Pass Theorem, see \cite{AC} for example. For periodic potential $V$ that changes sign and $0$ lies in the gap of the spectrum of the Schr\"{o}dinger operator $-\Delta +V$, the problem is strongly indefinite, and the existence of solution for $p=2$ was considered in \cite{BJS} by reduction arguments. For a general case, Ackermann \cite{AC} proposed a new approach to prove the existence of infinitely many geometrically distinct weak solutions. For other related results, we refer the readers to \cite{CS, GS} for the existence of sign-changing solutions, \cite{MS3, S, WW} for the existence and concentration behavior of the semiclassical solutions and \cite{MS4} for the critical nonlocal part with respect to the Hardy-Littlewood-Sobolev inequality.

In the present paper, we are interested in the nonlinear Choquard equation with deepening potential well
$$
\begin{array}{ll}
  - \Delta u + (\lambda a(x)+1)u =\displaystyle\Big(\frac{1}{|x|^{\mu}}\ast|u|^{p}\Big)|u|^{p-2}u  \,\,\,   \mbox{ in } \,\,\, \R^3, \\
\end{array}\eqno{(C)_{\lambda}}
$$
where  $\mu \in (0,3), p \in(2, 6-\mu)$ and $a(x)$ is a  nonnegative continuous function with $\Omega = int (a^{-1}(0))$ being a non-empty bounded open set with smooth boundary $\partial \Omega$. Moreover, $\Omega$ has $k$ connected components, more precisely,
\begin{equation} \label{a1}
\Omega =\bigcup_{j=1}^{k}\Omega_j
\end{equation}
with
\begin{equation} \label{a2}
dist( \Omega_i, \Omega_j)>0  \quad \mbox{for} \quad i \neq j.
\end{equation}
Moreover, we suppose that there exists $M_0>0$ such that
\begin{equation}\label{a3}
	|\{ x\in \mathbb{R}^3;\, a(x)\leq
	M_0\}|<+\infty.
\end{equation}
Hereafter, if $A \subset \mathbb{R}^{3}$ is a mensurable set, $|A|$ denotes its Lebesgue's measure. The purpose of the present paper is to study the existence and the asymptotic shape of the solutions for $(C)_\lambda$ when $\lambda$ is large enough, more precisely, we will show the existence of multi-bump type solutions.

The motivation of the present paper arises from the results for the local Schr\"odinger equations with deepening potential well
\begin{equation} \label{LL2}
  - \Delta u + (\lambda a(x)+b(x))u = |u|^{p-2}u \,\,\,  \mbox{ in } \,\,\, \R^N,
\end{equation}
where $a(x), b(x)$ are suitable continuous functions and $ p \in (2, \frac{2N}{N-2} ) $ if $ N \ge 3 $; $ p \in (1, \infty) $ if $ N = 1, 2 $.
In \cite{BW2}, for $b(x)=1$, Bartsch and Wang proved the existence of a least energy solution for large $\lambda$ and that the sequence of solutions converges strongly to a least energy solution for a problem in bounded domain. They also showed the existence of at least cat$\Omega$ positive solutions for large $\lambda$, where $ \Omega =int (a^{-1}(0))$,  and the exponent $p$ is close to the critical exponent. The same results were also established by Clapp and Ding \cite{CD} for critical growth case. We also refer to \cite{BPW} for nonconstant $b(x)>0$, where the authors prove the existence of $k$ solutions that may change sign for any $k$ and $\lambda$ large enough. For other results related to Schr\"odinger equations with deepening potential well, we may refer the readers to \cite{ST, SZ, WZ}

 The existence and characterization of the solutions for problem \eqref{LL2} with large parameter $\lambda$ were considered in \cite{Alves, DT},  by supposing that $a(x)$ has a potential well $ \Omega =int (a^{-1}(0))$ consisting of $k$ disjoint bounded components $\Omega_1,\cdots, \Omega_k$, the authors studied the multiplicity and multi-bump shape of the solutions associated to the number of the components of the domain $ \Omega =int (a^{-1}(0))$. In \cite{DT}, by using of penalization ideas developed in \cite{DF1}, Ding and Tanaka were able to overcome the loss of compactness and then they applied  the deformation flow arguments found in \cite{ZR, Se} to prove the existence at least $ 2^k-1 $ solutions $u_\lambda$ for large values of $ \lambda $.  More precisely, for each non-empty subset $ \Gamma $ of $ \{ 1,\ldots,k \} $, it was proved that, for any sequence $ \lambda_n \to \infty $ one can extract a subsequence $( \lambda_{n_i}) $ such that $( u_{ \lambda_{n_i} } )$ converges strongly in $ H^1 \big( \mathbb R^N \big) $ to a function $ u $, which satisfies $ u = 0 $ outside $\displaystyle \Omega_\Gamma= \bigcup_{ j \in \Gamma } \Omega_j $ and $ u_{|_{\Omega_j}}, \, j \in \Gamma$, is a least energy solution for
\begin{equation} \label{LL}
   \begin{cases}
		  - \Delta u +  u  = |u|^{p-2}u, \text{ in } \Omega_j,\vspace{3mm} \\
		  u \in H^1_0 \big( \Omega_j \big), \, u > 0, \text{ in } \Omega_j.
	 \end{cases}
\end{equation}
As we all know, the problem \eqref{LL}  on bounded domain plays an important role in the study of multi-bump shaped solutions for problem \eqref{LL2}. By using of "gluing" techniques, Ding and Tanaka  used the ground states of problem \eqref{LL} as building bricks to construct minimax values and then proved the existence of multibump solutions by deformation flow arguments.

From the commentaries above, it is quite natural to ask if the results in \cite{Alves, DT} still hold for the generalized Choquard equation. Unfortunately, we can not draw a similar conclusion in a straight way, since  the nonlinearity of the generalized Choquard equation is a  nonlocal one. For $\Gamma=\{1,\cdots,l\}$ with $l \le k$ and $\displaystyle \Omega_\Gamma= \bigcup_{ j \in \Gamma } \Omega_j $ , it is easy to see that
$$
\int_{\Omega_\Gamma}\Big(\int_{\Omega_\Gamma} \frac{ |u|^p}{|x-y|^{\mu}}dy\Big)|u|^{p}dx\neq\displaystyle\sum_{i=1}^l\int_{\Omega_i}\Big(\int_{\Omega_i} \frac{ |u|^p}{|x-y|^{\mu}}dy\Big)|u|^{p}dx.
$$
Thus, it is impossible to repeat the same arguments explored in \cite{DT} to use the least energy solution of
$$
   \begin{cases}
		  - \Delta u +  u  = \displaystyle\Big(\int_{\Omega_j} \frac{ |u|^p}{|x-y|^{\mu}}dy\Big)|u|^{p-2}u, \text{ in } \Omega_j,\\
			\, u \not= 0, \text{ in } \Omega_j, \\
		  u \in H^1_0 \big( \Omega_j \big), \quad j\in \Gamma,
	 \end{cases} \eqno{(C)_{j}}
$$
for building  the multi-bump solutions. For the generalized  Choquard equation, it can be observed that the equation
  $$
   \begin{cases}
		  - \Delta u +  u  = \displaystyle\Big(\int_{\Omega_\Gamma} \frac{ |u|^p}{|x-y|^{\mu}}dy\Big)|u|^{p-2}u, \text{ in } \Omega_\Gamma,\\
		  u \in H^1_0 \big( \Omega_\Gamma \big),
	 \end{cases} \eqno{(C)_{\infty,\Gamma}}
$$
plays the role of the limit problem for equation $(C)_{\lambda}$ as $\lambda$ goes to infinity. Moreover, noticing that the solution may disappear on some component, for the Dirichlet problem of Choquard equation with components, it is not easy to prove the existence of  the least energy solution that is nonzero on each component $\Omega_j$, $j\in \Gamma$. In order to find this type of least energy solution  we will study the minimizing problem on a subset of the Nehari manifold, see Section 2 for more details.

Here, we will also avoid the penalization arguments found in \cite{DF1}, because by using this method we are led to assume more restrictions on the constants $\mu$ and $p.$ For that reason, instead of the penalization method, we  will follow the approach explored by Alves and N\'obrega in \cite{AN}, which showed the existence of  multi-bump solution for  problem  (\ref{LL2}) driven by the biharmonic operator.  Thus, as in \cite{AN},  we will work directly with the energy functional associated with $(C)_\lambda$, and we will modify in a different way the set of pathes where Deformation Lemma is used, see Sections 5 and 6 for more details.

To prove the existence of positive multi-bump solutions for $(C)_{\lambda}$, the first step is to consider the limit Dirichlet problem $(C)_{\infty,\Gamma}$  and to look for the existence of least energy solution  that is nonzero on each component $\Omega_j$, $j\in \Gamma$ . Having this in mind, we proved the following result.

\begin{theorem} \label{main1}
Suppose that $\mu \in (0,3)$ and  $p \in [2,6-\mu)$. Then problem $(C)_{\infty,\Gamma}$ possesses a least energy solution $u$ that is nonzero on each component $\Omega_{j}$ of $\Omega_{\Gamma}$, $j\in \Gamma$.
\end{theorem}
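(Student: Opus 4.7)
The plan is to find the solution as a minimizer of $I_\Gamma$ on a Nehari-type manifold that has the nonvanishing-on-each-component constraint built in, bypassing the nonlocal coupling between components by a projection device. Since $\mathrm{dist}(\Omega_i,\Omega_j)>0$, we have $H^1_0(\Omega_\Gamma)=\bigoplus_j H^1_0(\Omega_j)$, and for each $j\in\Gamma$
$$
G_j(u):=\int_{\Omega_j}(|\nabla u|^2+u^2)\,dx-\int_{\Omega_j}\Bigl(\int_{\Omega_\Gamma}\frac{|u(y)|^p}{|x-y|^\mu}\,dy\Bigr)|u(x)|^p\,dx
$$
coincides with $I_\Gamma'(u)(u\chi_{\Omega_j})$. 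Define $\cm_\Gamma:=\{u\in H^1_0(\Omega_\Gamma):u|_{\Omega_j}\not\equiv 0,\ G_j(u)=0\ \forall j\in\Gamma\}$ and $c_\Gamma:=\inf_{\cm_\Gamma}I_\Gamma$; on $\cm_\Gamma$ one has $I_\Gamma(u)=(\tfrac12-\tfrac1{2p})\|u\|^2$.

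The crucial technical ingredient is a projection lemma asserting that for every $v\in H^1_0(\Omega_\Gamma)$ with $v|_{\Omega_j}\not\equiv0$ for each $j$, the polynomial
$$
f_v(t):=I_\Gamma\Bigl(\sum_j t_j v|_{\Omega_j}\Bigr)=\tfrac12\sum_j t_j^2\|v|_{\Omega_j}\|^2-\tfrac1{2p}\sum_{i,j}t_i^pt_j^p A_{ij}(v),\quad A_{ij}(v):=\iint_{\Omega_i\times\Omega_j}\frac{|v|^p(x)|v|^p(y)}{|x-y|^\mu}\,dxdy,
$$
has a unique critical point $\bar t(v)\in(0,\infty)^l$, which is the global maximum. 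Existence follows because $f_v\to-\infty$ at infinity and $\partial_j^2 f_v|_{t_j=0}=\|v|_{\Omega_j}\|^2>0$ (here I use $p>2$); for uniqueness, a direct computation shows the Hessian of $f_v$ at any critical point is strictly negative definite, with its dominating quadratic part $-p\iint h(x)h(y)|x-y|^{-\mu}dxdy$ for an appropriate $h$ linear in $\lambda$ strictly positive by the positive-definiteness of the Riesz kernel on $\R^3$, so there are no saddle points and a mountain-pass/gradient-flow argument forces uniqueness. Consequently $u\in\cm_\Gamma\iff\bar t(u)=(1,\ldots,1)$. Taking $v=\sum_j w_j$ for any $w_j\in H^1_0(\Omega_j)\setminus\{0\}$ gives $\cm_\Gamma\neq\emptyset$, and combining Hardy-Littlewood-Sobolev with Sobolev yields $\|u|_{\Omega_j}\|^2\leq C\|u|_{\Omega_j}\|^p\|u\|^p$ on $\cm_\Gamma$, producing uniform positive lower bounds on each $\|u|_{\Omega_j}\|$ and on $c_\Gamma$.

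For a minimizing sequence $(u_n)\subset\cm_\Gamma$, $\|u_n\|$ is bounded and I extract $u_n\rightharpoonup u$ in $H^1_0(\Omega_\Gamma)$, strongly in $L^q$ for $q\in[1,6)$; transferring the uniform $L^q$-lower bounds produces $u|_{\Omega_j}\not\equiv 0$ for each $j$. \emph{The main obstacle} is that passing to the limit in $G_j(u_n)=0$ only yields $G_j(u)\leq 0$, so a priori $u\notin\cm_\Gamma$. I bypass this by setting $\bar v:=\sum_j\bar t_j(u)\,u|_{\Omega_j}\in\cm_\Gamma$; the projection lemma applied to each $u_n\in\cm_\Gamma$ says $t=(1,\ldots,1)$ is the global maximum of $f_{u_n}$, so
$$
I_\Gamma\Bigl(\sum_j\bar t_j(u)u_n|_{\Omega_j}\Bigr)=f_{u_n}(\bar t(u))\leq f_{u_n}(1)=I_\Gamma(u_n)\quad\text{for every }n,
$$
and weak lower semicontinuity of $I_\Gamma$ (its nonlocal part is continuous under the weak convergence by Rellich and HLS) gives $I_\Gamma(\bar v)\leq c_\Gamma$; since $\bar v\in\cm_\Gamma$ also forces $I_\Gamma(\bar v)\geq c_\Gamma$, the infimum is attained at $\bar v$.

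Finally, to see that $\bar v$ actually solves $(C)_{\infty,\Gamma}$ and is not merely a constrained critical point, the Lagrange multiplier condition $I'_\Gamma(\bar v)=\sum_j\lambda_j G'_j(\bar v)$, tested against each $\bar v\chi_{\Omega_i}$, gives $(2-p)B_i\lambda_i=p\sum_j A_{ij}(\bar v)\lambda_j$ with $B_i=\sum_j A_{ij}(\bar v)>0$. Multiplying by $\lambda_i$ and summing, the right-hand side equals $p\iint g(x)g(y)|x-y|^{-\mu}dxdy$ with $g=\sum_i\lambda_i|\bar v|^p\chi_{\Omega_i}$, strictly positive for $\lambda\neq 0$ by Riesz-kernel positivity, while the left-hand side is nonpositive for $p>2$; hence $\lambda=0$ and $I'_\Gamma(\bar v)=0$, so $\bar v$ is the desired least-energy solution nonzero on each component.
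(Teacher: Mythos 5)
Your overall skeleton coincides with the paper's: minimize $I_\Gamma$ over the set $\mathcal{M}_\Gamma$ of functions that are on the Nehari manifold of each component simultaneously, use a projection lemma to recover a minimizer from the weak limit of a minimizing sequence, and then show the constrained minimizer is a free critical point. You diverge from the paper in the two technical steps, and in one of them your route is genuinely different and arguably cleaner. For the projection lemma, the paper substitutes $s_j=t_j^{p}$ and observes that $s\mapsto I_\Gamma(\sum_j s_j^{1/p}v_j)$ is \emph{strictly concave} on $[0,\infty)^l$ (concavity of $s\mapsto s^{2/p}$ against convexity of the squared Riesz potential), which gives existence and uniqueness of the maximizer in one stroke; you instead compute the Hessian of $f_v$ at interior critical points and invoke a mountain-pass/degree argument to exclude a second critical point. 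Your Hessian computation is correct (negative definiteness follows from $p>2$ plus positive definiteness of the Riesz kernel), but the deduction ``every critical point is a nondegenerate max, hence there is only one'' is left as a gesture; it can be made rigorous, but you should either carry out the mountain-pass argument in the open octant (handling the boundary faces, where $\partial_jf_v$ vanishes identically) or simply adopt the concavity substitution. For the criticality step the paper runs a quantitative deformation lemma on the two-parameter surface $(t,s)\mapsto t^{1/p}(w_*)_1+s^{1/p}(w_*)_2$ and uses Brouwer's fixed point theorem to show the deformed surface still meets $\mathcal{M}_\Gamma$, reaching a contradiction; your Lagrange-multiplier computation, killing the multipliers by pairing the system with $\lambda$ and using $(2-p)\sum_iB_i\lambda_i^2=p\iint g(x)g(y)|x-y|^{-\mu}dxdy$ with opposite signs, is shorter and more elementary, and it is valid because the matrix $\big(G_j'(\bar v)(\bar v\chi_{\Omega_i})\big)_{ij}$ you exhibit is invertible, so the constraints satisfy the qualification condition and $\mathcal{M}_\Gamma$ is a natural constraint. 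This is a legitimate alternative to the paper's deformation argument.

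There is, however, one genuine gap relative to the statement as written: Theorem \ref{main1} claims the range $p\in[2,6-\mu)$, and your argument uses $p>2$ essentially everywhere — in the positivity of $\partial_j^2f_v$ at $t_j=0$ (which is what keeps the maximizer off the boundary of the octant), in the componentwise lower bound $\|u|_{\Omega_j}\|^{p-2}\geq (C\|u\|^{p})^{-1}$ (which degenerates to a bound on the total norm only when $p=2$), and in the sign of the left-hand side of the multiplier identity (which vanishes at $p=2$, although there the right-hand side is still strictly positive, so that particular step survives). The paper itself needs an extra nondegeneracy hypothesis \eqref{condition} and a separate perturbation argument ($g'(0)>0$) to handle $p=2$; your proof as written simply does not cover that endpoint, and you should either restrict to $p>2$ (which is all that Theorem \ref{main2} uses) or supply the analogue of the paper's $p=2$ discussion.
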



Using the above theorem, we are able to state our main result.

\begin{theorem} \label{main2}
Suppose that $\mu \in (0,3)$ and  $p \in (2,6-\mu)$. There exists a constant $ \lambda_0 > 0 $, such that for any non-empty subset $\Gamma \subset \lbrace1,\cdots, k\rbrace$  and $ \lambda \ge \lambda_0 $, the problem $ \big( C \big)_\lambda $ has a positive solution $u_{\lambda}$, which possesses the following property: For any sequence $ \lambda_n \to \infty $ we can extract a subsequence $ (\lambda_{n_i}) $ such that $ (u_{ \lambda_{n_i}}) $ converges strongly in $ H^{1}(\mathbb R^3) $ to a function $ u $, which satisfies $ u = 0 $ outside $\displaystyle \Omega_{\Gamma} =\bigcup_{j\in \Gamma}\Omega_j $, and $ u_{|_{ \Omega_{\Gamma}}}$ is a least energy solution for $(C)_{\infty,\Gamma}$ in the sense of Theorem \ref{main1}.
\end{theorem}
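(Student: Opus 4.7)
The plan is to build $u_\lambda$ as a critical point of the energy functional associated with $(C)_\lambda$, localising a min-max scheme around the ground configuration of the limit problem $(C)_{\infty,\Gamma}$ furnished by Theorem~\ref{main1}. Following the approach of Alves and N\'obrega, I work directly in the Hilbert space
\[
E_\lambda=\Big\{u\in H^1(\R^3):\int_{\R^3}a(x)u^2\,dx<\infty\Big\},\qquad
\|u\|_\lambda^2=\int_{\R^3}\big(|\nabla u|^2+(\lambda a(x)+1)u^2\big)\,dx,
\]
with functional $I_\lambda(u)=\tfrac12\|u\|_\lambda^2-\tfrac1{2p}\int_{\R^3}\big(\tfrac1{|x|^\mu}\ast|u|^p\big)|u|^p\,dx$. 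Fix a non-empty $\Gamma\subset\{1,\dots,k\}$ and let $w$ be the least energy solution of $(C)_{\infty,\Gamma}$ supplied by Theorem~\ref{main1}, nonzero on each $\Omega_j$, $j\in\Gamma$. Setting $w_j=w\chi_{\Omega_j}$ and extending by zero, each $w_j\in E_\lambda$ and the $w_j$'s have pairwise disjoint supports by \eqref{a2}.

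For a suitably large $R>1$, define the parameter cube $Q=[1/R,R]^{|\Gamma|}$ and the model map $\gamma_0(s)=\sum_{j\in\Gamma}s_j w_j$ for $s=(s_j)_{j\in\Gamma}\in Q$. A short computation, using that $w$ satisfies the Euler--Lagrange equation for $I_{\infty,\Gamma}$ and that the $w_j$ have disjoint supports, gives $\nabla_s I_{\infty,\Gamma}(\gamma_0(s))|_{s=(1,\dots,1)}=0$ and shows that $(1,\dots,1)$ is the unique global maximum of $s\mapsto I_{\infty,\Gamma}(\gamma_0(s))$ on $Q$, with value $c_{\infty,\Gamma}=I_{\infty,\Gamma}(w)$; choosing $R$ large makes $\max_{\partial Q}I_{\infty,\Gamma}(\gamma_0)\le c_{\infty,\Gamma}-2\delta$ for some $\delta>0$. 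The min-max level is
\[
b_{\lambda,\Gamma}=\inf_{\gamma\in\mathcal{P}_\lambda}\max_{s\in Q}I_\lambda(\gamma(s)),\qquad
\mathcal{P}_\lambda=\{\gamma\in C(Q,E_\lambda):\gamma|_{\partial Q}=\gamma_0|_{\partial Q}\},
\]
and since $\mathrm{supp}(\gamma_0)\subset\overline{\Omega}$ where $a\equiv 0$, one has $b_{\lambda,\Gamma}\le c_{\infty,\Gamma}$ for every $\lambda$, while a classical linking/intersection argument gives $b_{\lambda,\Gamma}\to c_{\infty,\Gamma}$ as $\lambda\to\infty$.

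The heart of the proof, and its main obstacle, is a local Palais--Smale analysis in the neighbourhood
\[
\mathcal{O}_\lambda=\Big\{u\in E_\lambda:\min_{s\in Q}\|u-\gamma_0(s)\|_\lambda<\rho\Big\}
\]
for some $\rho>0$ independent of large $\lambda$. The nonlocal Hartree nonlinearity blocks the decoupling into components exploited by the Ding--Tanaka penalisation; instead, I will control the cross contributions $\iint|u|^p|x-y|^{-\mu}|u|^p$ via the Hardy--Littlewood--Sobolev inequality, and combine \eqref{a3} with the coercivity of $\lambda a(x)$ to force weak limits of $(PS)_c$-sequences with $c$ near $c_{\infty,\Gamma}$ to vanish outside $\overline{\Omega}$. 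Choosing $\rho$ small enough that no such sequence can cross $\partial\mathcal{O}_\lambda$ at a level in $[b_{\lambda,\Gamma}-\delta,b_{\lambda,\Gamma}+\delta]$, a deformation lemma confined to $\mathcal{O}_\lambda$ produces a nontrivial critical point $u_\lambda\in\mathcal{O}_\lambda$ at the level $b_{\lambda,\Gamma}$; positivity follows from running the construction with $u_+$ in the Hartree term and applying the strong maximum principle.

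For the asymptotic analysis, take $\lambda_n\to\infty$. The energy bound $I_{\lambda_n}(u_{\lambda_n})\to c_{\infty,\Gamma}$ together with $\langle I'_{\lambda_n}(u_{\lambda_n}),u_{\lambda_n}\rangle=0$ yields a uniform bound $\|u_{\lambda_n}\|_{\lambda_n}\le C$ and hence a weak limit $u\in H^1(\R^3)$; the control $\lambda_n\int a(x)u_{\lambda_n}^2\le C$ combined with \eqref{a3} forces $u\equiv 0$ outside $\overline{\Omega}$, so $u\in H^1_0(\Omega)$. A Br\'ezis--Lieb decomposition for both the quadratic and the Hartree terms, together with the localisation $u_{\lambda_n}\in\mathcal{O}_{\lambda_n}$, upgrades the convergence to strong in $H^1(\R^3)$ and identifies $u|_{\Omega_\Gamma}$ as a least energy solution of $(C)_{\infty,\Gamma}$ in the sense of Theorem~\ref{main1}. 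The constraint $u_{\lambda_n}\in\mathcal{O}_{\lambda_n}$ is precisely what prevents any bump from being lost on some $\Omega_j$ with $j\in\Gamma$ in the limit, closing the loop with the statement of the theorem.
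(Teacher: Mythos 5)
Your architecture (the limit configuration $w=\sum_{j\in\Gamma}w_j$ from Theorem \ref{main1}, a cube of rescalings $\gamma_0$, a minimax over paths pinned on the boundary of the cube, a localized deformation, and a $(PS)_\infty$ analysis for the asymptotics) is the paper's, and your final convergence step is essentially Proposition \ref{(PS) infty condition}. But the two middle steps have genuine gaps. The first is the localization set. You confine the flow to $\mathcal{O}_\lambda=\{u:\min_{s\in Q}\|u-\gamma_0(s)\|_\lambda<\rho\}$ and assert that for small $\rho$ no $(PS)$-sequence at levels near $b_{\lambda,\Gamma}$ can sit on $\partial\mathcal{O}_\lambda$. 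The $(PS)_\infty$ analysis only tells you that such a sequence converges to a solution $u$ of the limit problem with $I_{\Gamma}(u)=c_{\Gamma}$ and $\operatorname{dist}(u,\gamma_0(Q))=\rho$; since Theorem \ref{main1} gives no isolatedness or uniqueness of least energy solutions, this is not a contradiction, and the ``no crossing'' claim is unsupported. The paper's region $\mathcal{A}_\xi^\lambda$ in \eqref{A} is deliberately \emph{not} a norm-neighbourhood of a model path: it is cut out by the energy window $|I_\lambda(u)-c_\Gamma|\le\xi$, componentwise lower bounds on $\|u\|_{\lambda,\Omega_j'}$, smallness of $\|u\|_{\lambda,\R^3\setminus\Omega_\Gamma'}$, and the sign condition $\Phi_\Gamma(u)\ge0$. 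With that definition any limit of a $(PS)_\infty$-sequence taken from $\mathcal{A}_{2\xi}^{\lambda_n}\setminus\mathcal{A}_\xi^{\lambda_n}$ lands in $\mathcal{M}_\Gamma$ at level exactly $c_\Gamma$, which drags $u_n$ back into $\mathcal{A}_\xi^{\lambda_n}$ and yields the contradiction of Proposition \ref{derivative estimate}; your neighbourhood has no such self-improving property.

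The second gap is the lower bound on the minimax level. You invoke ``a classical linking/intersection argument'' for $b_{\lambda,\Gamma}\to c_{\infty,\Gamma}$, but this is exactly where the nonlocal term blocks the classical route and where the paper does its real work (see Remark \ref{Re2}). Because $\int\!\!\int|u(x)|^p|u(y)|^p|x-y|^{-\mu}$ does not decouple over components or over $\Omega_\Gamma'$ versus its complement, the inequality $\max_Q I_\lambda(\gamma)\ge c_{\lambda,\Gamma}$ is obtained by (i) introducing the Neumann functional $I_{\lambda,\Gamma}$ on $\Omega_\Gamma'$ and proving $c_{\lambda,\Gamma}\to c_\Gamma$ (Lemma \ref{CONVER}); (ii) restricting to the path class $\Gamma_\ast$ with the extra constraint $\Phi_\Gamma(\gamma)\ge0$, which is precisely what absorbs the exterior and cross Hartree terms and yields $I_\lambda(\gamma)\ge I_{\lambda,\Gamma}(\gamma)$ as in \eqref{Low estimate}; and (iii) Miranda's theorem (Lemma \ref{intersection}, which is where $p>2$ enters) to force every admissible path to meet the componentwise Nehari set $\mathcal{M}_\Gamma'$. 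With your unconstrained class $\mathcal{P}_\lambda$ the cross term $-\frac1p\int_{\Omega_\Gamma'}\big(\int_{\R^3\setminus\Omega_\Gamma'}|x-y|^{-\mu}|\gamma|^p\,dy\big)|\gamma|^p\,dx$ has the wrong sign, the comparison with $c_{\lambda,\Gamma}$ can fail, and the convergence $b_{\lambda,\Gamma}\to c_\Gamma$ is not established; one must also check, as the paper does, that the deformation flow preserves the constraint $\Phi_\Gamma\ge0$ so that the deformed path stays admissible.
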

\begin{remark}\label{Re1}
Noting that $ (u_{ \lambda_{n_i}}) $ converges strongly in $ H^{1}(\mathbb R^3) $ to a function $ u $,
which is zero outside $\displaystyle \Omega_{\Gamma}$ and nonzero on each component $\Omega_j$, $j\in \Gamma$. In this way,  we can conclude that the solutions $ (u_{ \lambda}) $ have the shape of multi-bump  if $\lambda$ is large enough.
\end{remark}

\begin{remark}\label{Re2}
By the Hardy-Littlewood-Sobolev inequality, the natural interval for considering the Choquard equation is $ (\frac{6-\mu}{3},6-\mu)$, however, the case $\frac{6-\mu}{3}<p\leq2$ is not considered in Theorem \ref{main2}. This is due to the fact that  the method applied here do have some limitations in proving the intersection property for the pathes and the set $\mathcal{M}_\Gamma$ defined in Section 5. Inspired by a recent paper by Ghimenti,  Moroz and  Van Schaftingen \cite{GMS}, we will consider the case $p=2$ in a future paper by approximation with $p\downarrow2$.
\end{remark}


In order to apply variational methods to obtain the solutions for problems $(C)_{\lambda}$ and $(C)_{\infty,\Gamma}$ , the following classical Hardy-Littlewood-Sobolev inequality will be frequently used.
\begin{proposition}\label{HLS} \cite{LL} $\,\,[Hardy-Littlewood-Sobolev \ inequality]$:\\
Let $s, r>1$ and $0<\mu<3$ with $1/s+\mu/3+1/r=2$. Let $f\in
L^s(\R^3)$ and $h\in L^r(\R^3)$. There exists a sharp constant
$C(s,\mu,r)$, independent of $f,h$, such that
$$
\int_{\R^3}\int_{\R^3}\frac{f(x)h(y)}{|x-y|^\mu}dydx\leq
C(s,\mu,r) |f|_s|h|_r.
$$
  \end{proposition}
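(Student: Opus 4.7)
The plan is to derive the inequality, as in Lieb--Loss, from mapping properties of the Riesz potential
\[
I_\mu f(x) := \int_{\R^3}\frac{f(y)}{|x-y|^\mu}\,dy,
\]
combined with H\"older's inequality. Writing the double integral as $\int_{\R^3} h(x)\,(I_\mu f)(x)\,dx$, and setting $q=r'$, the scaling relation $1/s+\mu/3+1/r=2$ becomes exactly $1/q = 1/s+\mu/3-1$. The statement will then follow from a strong-type bound
\[
\|I_\mu f\|_{L^q(\R^3)}\le C(s,\mu)\,\|f\|_{L^s(\R^3)},\qquad 1<s<3/\mu,
\]
by an immediate application of H\"older to $\int h \cdot I_\mu f$.

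To obtain this mapping property, I would first note that the kernel $|x|^{-\mu}$ belongs to the weak Lebesgue space $L^{3/\mu,\infty}(\R^3)$; this is immediate from the distribution function $|\{x\in\R^3:|x|^{-\mu}>t\}|=\frac{4\pi}{3}t^{-3/\mu}$. The weak-type $(s,q)$ inequality for $I_\mu$ would then come from the standard truncation trick: for each $R>0$ split the kernel as $|x|^{-\mu}=k_R(x)+g_R(x)$ with $k_R$ supported in $B_R(0)$ and $g_R$ in its complement, estimate $k_R * f$ by Young's inequality (using $k_R\in L^1$) and $g_R * f$ by H\"older (using $g_R\in L^{s'}$), and then optimize in $R$ to balance the two contributions to $|\{I_\mu f>\lambda\}|$. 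Finally, Marcinkiewicz interpolation between two such weak-type estimates on the scaling line upgrades the bound to a strong-type one for every $s\in(1,3/\mu)$.

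The chief technical point is the careful optimization in the truncation step and the verification that $I_\mu f$ is measurable and finite a.e.\ on $L^s$; these are routine but must be executed cleanly. The identification of the \emph{sharp} constant $C(s,\mu,r)$ and its extremizers is much deeper---it requires Riesz's rearrangement inequality to reduce to radial decreasing functions, followed by an exploitation of the conformal invariance of $|x-y|^{-\mu}$ under stereographic projection, as carried out by Lieb. For the present paper, however, only the existence of \emph{some} finite constant is needed, so sharpness can be bypassed entirely.
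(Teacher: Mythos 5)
The paper does not prove this statement at all: it is quoted as a classical result with a citation to Lieb--Loss, so there is no internal proof to compare against. Your outline is a correct and standard real-variable proof of the (non-sharp) inequality --- dualize to a mapping property of the Riesz potential, get the weak-type $(s,q)$ bound by truncating the kernel and optimizing in $R$, then upgrade by Marcinkiewicz interpolation along the scaling line. Note that this is actually a \emph{different} route from the one in the cited reference: Lieb--Loss prove the rough form of HLS directly via the layer-cake representation and a level-set decomposition (and the sharp form via Riesz rearrangement and conformal invariance), whereas your argument is the Sobolev/Stein-style convolution proof. Both are legitimate; yours has the advantage of being modular (it isolates the $L^s\to L^q$ bound for $I_\mu$, which is of independent use), while the Lieb--Loss argument avoids interpolation machinery. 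You are also right that sharpness of the constant is irrelevant for this paper, which only ever uses the inequality qualitatively.

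One slip to fix: the range of validity of the strong-type bound $\|I_\mu f\|_{L^q}\le C\|f\|_{L^s}$ is $1<s<3/(3-\mu)$, not $1<s<3/\mu$. The kernel $|x|^{-\mu}$ is the Riesz kernel of order $\alpha=3-\mu$, so the classical condition $1<s<n/\alpha$ reads $s<3/(3-\mu)$; equivalently, this is exactly the condition under which $1/q=1/s+\mu/3-1$ is positive and under which the tail $g_R$ of your truncation lies in $L^{s'}$ (one needs $\mu s'>3$, i.e.\ $s'>3/\mu$, i.e.\ $s<3/(3-\mu)$). The error is harmless here because the hypothesis $1/s+\mu/3+1/r=2$ with $r>1$ forces $1/s>1-\mu/3$ automatically, so the exponents you actually need always fall in the correct range; but as written the claimed range is wrong for every $\mu\neq 3/2$.
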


In the sequel,  we fix $ E_\lambda = \big( E, \| \cdot \|_\lambda \big) $ where
$$
   E = \left\{ u \in H^{1} ( \mathbb R^3 ) \, ; \, \int_{ \mathbb R^3 } a(x) |u|^{ 2 }dx < \infty \right\},
$$
and
$$
   \| u \|_\lambda = \left(\int_{\R^3}(|\nabla u|^{2}+(\lambda a(x)+1)|u|^{2})dx \right)^{\frac{1}{2}}.
$$
Obviously, $ E_\lambda $ is a Hilbert space, $ E_\lambda \hookrightarrow H^{1}( \mathbb R^3 ) $ continuously for $ \lambda \geq 0 $ and  $ E_\lambda $ is compactly embedded in $ L_{ loc }^{s}( \mathbb R^3 ) $, for all $ 1 \leq s <6$. We will study the existence of solutions for problem $(C)_\lambda$ by looking for critical points of the energy functional $ I_\lambda \colon E_\lambda \to \mathbb R $ given by
$$
   I_\lambda (u) = \frac{1}{2}\int_{ \mathbb R^3 }  \left( | \nabla u |^{2 } + \big( \lambda a(x) + 1 \big) u^{ 2 } \right)dx -\frac {1}{2p}\int_{\R^3}\Big(\frac{1}{|x|^{\mu}}\ast|u|^{p}\Big)|u|^{p}dx.
$$
For $p \in (\frac{6-\mu}{3},6-\mu)$, the Hardy-Littlewood-Sobolev inequality and the Sobolev embeddings imply that the functional  $I_\lambda \in C^1(E_\lambda,\mathbb{R})$ with
\[
I_\lambda'(u)v =  \int_{\R^3}\nabla u \nabla v +(\lambda a(x)+1)uv dx- \int_{\R^3}\Big(\frac{1}{|x|^{\mu}}\ast|u|^{p}\Big) |u|^{p-2}uvdx , \,\,\,\, \forall u,v \in E_\lambda.
\]
Hence, the critical points of $I_\lambda$ are in fact the weak solutions for problem $(C)_\lambda$.

This paper is organized as follows. In Section 2, we study a nonlocal problem set on bounded domain with 2 disjoint components for simplicity. By minimizing and deformation flow arguments, we are able to prove the existence of least energy solution which is nonzero on each component. In Section 3, we adapt the method used in \cite{AN} for the nonlocal situation, which permits us to prove that the energy functional satisfies the $(PS)$ condition for $\lambda$ large enough. In Section 4, we study the behavior of $(PS)_{\infty}$ sequence. In Section 5 and 6, we adapt the deformation flow method to establish the existence of a special critical point, which is crucial for showing the existence of multi-bump solutions  for $\lambda$ large enough.

\section { The problem $(C)_{\infty,\Gamma}$}
First, we need to study the Dirichlet  problem $(C)_{\infty,\Gamma}$ with several components and investigate the existence of least energy solution that is nonzero on each component. The main idea is to prove that the energy functional associated to $(C)_{\infty,\Gamma}$ defined by
\begin{equation} \label{LFJ}
I_\Gamma(u)=\frac 12 \int_{\Omega_\Gamma}(|\nabla u|^{2}+|u|^{2})dx -\frac {1}{2p} \int_{\Omega_\Gamma}\Big(\int_{\Omega_\Gamma} \frac{ |u|^p}{|x-y|^{\mu}}dy\Big)|u|^{p}dx
\end{equation}
achieves a minimum value on
$$
{\mathcal M_{\Gamma}}=\{u\in \mathcal N_{\Gamma}: I_\Gamma'(u)u_i=0 \mbox { and }
u_{i}\neq 0, \,\,\, i\in \Gamma\}
$$
where $\Gamma \subset \{1,\cdots,k\}$, $u_{i}=u{|_{ \Omega_i}}$ and $\mathcal{N}_{\Gamma}$ is the Nehari manifold of $I_\Gamma$ defined by
$$
{\mathcal N_{\Gamma}} = \{ u\in
H^1_0(\Omega_{\Gamma})\setminus\{0\}\, :\, I_\Gamma'(u)u=0\}.
$$
More precisely, we will prove that there is $w \in {\mathcal{M}_{\Gamma}}$ such that
\begin{equation} \label{LESC}
I_\Gamma(w)=\inf_{u \in {\mathcal{M}_{\Gamma}}}I_\Gamma(u) \quad \mbox{and} \quad I'_\Gamma(w)=0.
\end{equation}
Hereafter, we say that $w \in H^1_0(\Omega_{\Gamma})$, satisfying  $w_{i}=w{|_{ \Omega_i}}\neq 0, i\in \Gamma$, is a least energy solution for $(C)_{\infty,\Gamma}$ if the above condition \eqref{LESC}  holds. This feature will be used to characterize the multi-bump shape of the solutions of $(C)_\lambda$. Without loss of generality,  we will only consider $\Gamma=\{1,2\}$ for simplicity.  Moreover, we denote
by $\Omega,$ $\mathcal{M}$ and $\mathcal{N}$ the sets $\Omega_{\Gamma},$  $\mathcal{M}_{\Gamma}$ and $\mathcal{N}_{\Gamma}$ respectively, and $I_\Gamma$ will be denoted by $I$. Thereby,
$$
\Omega =\Omega_1\cup \Omega_2,$$
$$
{\mathcal M}=\{u\in \mathcal N: I'(u)u_i=0 \mbox { and }
u_{i}\neq 0, \,\,\, i=1,2. \}
$$
and
$$
{\mathcal N} = \{ u\in
H^1_0(\Omega)\setminus\{0\}\, :\, I'(u)u=0\}.
$$
In what follows, we denote by $||\,\,\,||$, $||\,\,\,||_1$ and $||\,\,\,||_2$  the norms in $H^{1}_0(\Omega)$, $H^{1}_0(\Omega_1)$ and $H^{1}_0(\Omega_2)$ given by
$$
||u||=\left(\int_{\Omega}(|\nabla u|^{2}+|u|^{2})dx\right)^{\frac{1}{2}},
$$
$$
||u||_1=\left(\int_{\Omega_1}(|\nabla u|^{2}+|u|^{2})dx\right)^{\frac{1}{2}}
$$
and
$$
||u||_2=\left(\int_{\Omega_2}(|\nabla u|^{2}+|u|^{2})dx\right)^{\frac{1}{2}}
$$
respectively.

The following Lemma shows that the set ${\mathcal M}$ is not empty.
\begin{lemma}\label{lema3}
Let $0<\mu<3$, $2 \leq p<6-\mu$ and  $v\in H_0^1(\Omega)$ with $v_{j}\neq 0$ for $j=1,2$, then  there exists $(\beta_1, \beta_2) \in (0,+\infty)^2$ such that
$\beta_1 v_1 + \beta_2 v_2 \in {\mathcal M}$ which means ${\mathcal M} \not= \emptyset$ and moreover,  $c_0=\inf_{u \in {\mathcal M}}I(u)>0$.
\end{lemma}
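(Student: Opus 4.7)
The plan is to split the proof into two steps: (i) show that $\mathcal M\neq\emptyset$ by constructing, for each admissible $v$, a pair $(\beta_1,\beta_2)\in(0,\infty)^2$ with $\beta_1 v_1+\beta_2 v_2\in\mathcal M$; and (ii) establish a uniform lower bound on $\|u_i\|_i$ for elements of $\mathcal M$, from which $c_0>0$ follows at once.

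For (i), the disjoint supports of $v_1,v_2$ give the pointwise identity $|\beta_1 v_1+\beta_2 v_2|^p=\beta_1^p|v_1|^p+\beta_2^p|v_2|^p$, so
\[
\psi(\beta_1,\beta_2):=I(\beta_1 v_1+\beta_2 v_2)=\tfrac12(\beta_1^2\|v_1\|_1^2+\beta_2^2\|v_2\|_2^2)-\tfrac1{2p}\!\!\sum_{i,j\in\{1,2\}}\!\beta_i^p\beta_j^pA_{ij},
\]
with $A_{ij}:=\int_{\Omega_i}\!\int_{\Omega_j}|v_i|^p|v_j|^p|x-y|^{-\mu}\,dx\,dy>0$. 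A direct computation gives $I'(w)w_i=\beta_i\,\partial_{\beta_i}\psi$ for $w=\beta_1v_1+\beta_2v_2$, so the condition $w\in\mathcal M$ is equivalent to $\nabla\psi=0$ at a point of $(0,\infty)^2$. I produce such a critical point as an interior maximizer: $\psi$ is continuous on $[0,\infty)^2$ with $\psi(0,0)=0$, strictly positive for small positive $\beta$ (the $\beta^2$-part dominates), and $\psi\to-\infty$ as $|\beta|\to\infty$ (since $2p>2$), so $\psi$ attains a positive maximum. To exclude boundary maxima, at $(0,t_2)$ with $t_2^{2p-2}=\|v_2\|_2^2/A_{22}$ I expand
\[
\psi(\beta_1,t_2)-\psi(0,t_2)=\tfrac{\beta_1^2}{2}\|v_1\|_1^2-\tfrac{\beta_1^p t_2^p}{p}A_{12}-\tfrac{\beta_1^{2p}}{2p}A_{11},
\]
whose leading term as $\beta_1\to 0^+$ is $\tfrac12\beta_1^2\|v_1\|_1^2>0$ when $p>2$; so $\psi$ strictly increases into the interior, and the symmetric computation rules out $(t_1,0)$, placing the maximum at some $(\beta_1^*,\beta_2^*)\in(0,\infty)^2$.

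For (ii), fix $u\in\mathcal M$; then $I'(u)u_i=0$ reads $\|u_i\|_i^2=\int_{\Omega_i}\!\int_\Omega|u(x)|^p|u(y)|^p|x-y|^{-\mu}\,dx\,dy$. Splitting the inner integral over $\Omega_i$ and $\Omega_j$, applying Proposition~\ref{HLS} with the Sobolev embedding on the local part, and using $|x-y|\geq\mathrm{dist}(\Omega_i,\Omega_j)>0$ from \eqref{a2} on the cross part, I obtain
\[
\|u_i\|_i^2\leq C\bigl(\|u_i\|_i^{2p}+\|u_i\|_i^p\|u_j\|_j^p\bigr),\quad j\neq i.
\]
Since $u_i\neq 0$, dividing by $\|u_i\|_i^2$ and combining the two resulting inequalities (for $i=1,2$) forces $\|u_i\|_i\geq\delta>0$ uniformly in $u\in\mathcal M$. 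The identity $I(u)=I(u)-\tfrac1{2p}I'(u)u=(\tfrac12-\tfrac1{2p})\|u\|^2$ and $\|u\|^2=\|u_1\|_1^2+\|u_2\|_2^2\geq 2\delta^2$ then give $c_0\geq\frac{p-1}{p}\delta^2>0$.

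The main obstacle is the boundary analysis in step (i): for $p>2$ the leading quadratic term in the expansion of $\psi$ near the one-dimensional boundary maximizers is unambiguously positive, but in the borderline case $p=2$ the $\beta_1^2$ and $\beta_1^p\beta_2^p=\beta_1^2\beta_2^2$ terms are of the same order. For $p=2$ I would pass to the substitution $(s,t)=(\beta_1^2,\beta_2^2)$, noting that $\tilde\psi(s,t)$ is a strictly concave quadratic whose Hessian is positive definite thanks to the strict Cauchy--Schwarz inequality $A_{12}^2<A_{11}A_{22}$ for the Riesz bilinear form (strict because $v_1,v_2$ have disjoint supports), and analyze the associated linear system to locate the critical point in $(0,\infty)^2$.
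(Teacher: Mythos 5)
Your argument for $p>2$ is sound and is essentially the paper's: both proofs obtain the point of $\mathcal M$ as an interior maximizer of the fibering map $(\beta_1,\beta_2)\mapsto I(\beta_1 v_1+\beta_2 v_2)$ on the closed quadrant, the only real difference being that the paper substitutes $s_j=\beta_j^{p}$ so that the map becomes strictly concave (concavity of $s\mapsto s^{2/p}$ against convexity of the squared Riesz potential), which yields uniqueness of the maximizer for free, while you keep the original variables and exclude axis maxima by the expansion whose leading term is $\tfrac12\beta_1^2\|v_1\|_1^2$. The conclusion $c_0>0$ is also fine in substance, but your intermediate claim that each $\|u_i\|_i$ is bounded below uniformly on $\mathcal M$ does not follow from your two displayed inequalities: if $\|u_1\|_1\to0$ they only force $\|u_2\|_2\to\infty$, which is not excluded a priori on $\mathcal M$. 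What you actually need (and what the paper uses) is obtained by summing the two Nehari identities to get $\|u\|^2\le C\|u\|^{2p}$, hence $\|u\|\ge\tau>0$ and $I(u)=\left(\tfrac12-\tfrac1{2p}\right)\|u\|^2\ge\left(\tfrac12-\tfrac1{2p}\right)\tau$.

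The genuine gap is the case $p=2$, which the statement includes. Strict concavity of $\tilde\psi(s,t)$, i.e.\ $A_{12}^2<A_{11}A_{22}$, guarantees a unique critical point in $\mathbb R^2$ and that it is the global maximum, but it does \emph{not} place that critical point in $(0,\infty)^2$: the linear system $A_{11}s+A_{12}t=\|v_1\|_1^2$, $A_{12}s+A_{22}t=\|v_2\|_2^2$ has $s=\big(\|v_1\|_1^2A_{22}-\|v_2\|_2^2A_{12}\big)/\big(A_{11}A_{22}-A_{12}^2\big)$, whose sign can be negative. For instance, $A_{12}\ge D^{-\mu}|v_1|_{L^2}^2|v_2|_{L^2}^2$ with $D=\sup\{|x-y|: x\in\Omega_1,\,y\in\Omega_2\}$, while by Proposition \ref{HLS} $A_{22}$ is controlled by $|v_2|_{L^{12/(6-\mu)}}^4$ alone; choosing $v_2$ highly oscillatory makes $\|v_2\|_2^2$ arbitrarily large with $A_{22}$ bounded, so $\|v_1\|_1^2A_{22}<\|v_2\|_2^2A_{12}$ and the maximum of $\tilde\psi$ over the quadrant sits on an axis, producing no element of $\mathcal M$ along that ray. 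This is precisely why the paper, for $p=2$, first imposes the (after rescaling, harmless) normalization \eqref{condition} on $v_1,v_2$ and then runs a contradiction argument with a directional derivative at the putative axis maximizer; your closing sentence, "analyze the associated linear system to locate the critical point in $(0,\infty)^2$," presumes the very fact that needs proof. You must either adopt the paper's extra normalization-plus-contradiction step or supply some other mechanism for $p=2$.
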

\begin{proof} Fix $v \in H^{1}_0(\Omega)$ with $v_i \not=0$ for $i=1,2$, and for the case $p=2$, without loss of generality, we may additionally assume that
\begin{equation} \label{condition}
\|v_i\|_i^{2}\not= \int_{\Omega_i}\Big(\int_{\Omega_i} \frac{ |v_i|^2}{|x-y|^{\mu}}dy\Big)|v_i|^{2}dx \quad \mbox{for} \quad i=1,2.
\end{equation}
Adapt  some ideas in \cite{GS} and \cite{GMS} by changing variables $t_j=s_j^{\frac{1}{p}}$, we define the function
$$
G(s_1,s_2)=I(s_1^{\frac{1}{p}}v_1+s_2^{\frac{1}{p}}v_2)=\frac{s_1^{\frac{2}{p}}}{2}\|v_1\|_1^{2}+\frac{s_2^{\frac{2}{p}}}{2}\|v_2\|_2^{2}-\frac{1}{2p}\int_{\Omega}\left(\frac{1}{|x|^{\frac{\mu}{2}}}*(s_1|v_1|^{p}+s_2|v_2|^{p})\right)^{2} dx.
$$
As $G$ is a continuous function and $G(s_1,s_2) \to 0$ as $|(s_1,s_2)| \to +\infty$, we have that $G$ possesses a global maximum point $(a,b) \in [0,+\infty)^{2}$. However, as $G$ is strictly concave function, it follows that  $(a,b) \in (0,+\infty)^{2}$, $(a,b)$ is the unique global maximum point and $\nabla G(a,b)=(0,0)$, which implies that  ${\mathcal M} \not= \emptyset$. Here, we would like to point out that if $p>2$, it is easy to check that $a,b \not =0$. While for the case $p=2$, we are able to show this fact only with the restriction (\ref{condition}). In fact, argue by contradiction that $a=0$, notice that $(0,b)$ is the maximum point of $G$ then there holds
$$
\|v_2\|_2^{2}=b^2\int_{\Omega_2}\Big(\int_{\Omega_2} \frac{ |v_2|^2}{|x-y|^{\mu}}dy\Big)|v_2|^{2}dx,
$$
therefore $b\neq 1$. Consider the function $g:[0,+\infty) \to \mathbb{R}$ given by
$$
g(t)=G(0,b+\alpha t),
$$
where $\alpha$ is to be determined later. A direct computation shows that
$$
g'(0)=\frac12\|v_1\|^2_1-\frac{b}{2}\int_{\Omega_1}\int_{\Omega_2}\frac{|v_2|^2(y)|v_1|^{2}(x)}{|x-y|^{\mu}}dydx+\frac{\alpha}{2}\Big(\|v_2\|_2^{2}- b\int_{\Omega_2}\Big(\int_{\Omega_2} \frac{ |v_2|^2}{|x-y|^{\mu}}dy\Big)|v_2|^{2}dx\Big).
$$
Consequently, if $\alpha$ is suitably chosen, we have
$$
g'(0)>0,
$$
which obviously is a contradiction.

Next, we will show that $c_0 >0$. To begin with, we recall that if $w \in {\mathcal M}$, then
$$
\|w\|^{2}=\int_{\Omega}\int_{\Omega}\frac{|w(y)|^p|w(x)|^p}{|x-y|^{\mu}}dydx.
$$
Using the Hardy-Littlewood-Sobolev inequality, there is $C>0$ such that
$$
\|w\|^{2} \leq C \|w\|^{4}.
$$
As $\|w\|\not= 0$, the last inequality yields there is $\tau >0$ satisfying
$$
\|w\|^{2} \geq \tau, \quad \forall w \in {\mathcal M}.
$$
From this,
$$
I(w)=I(w)-\frac{1}{2p}I'(w)w=\left(\frac{1}{2}-\frac{1}{2p} \right)\|w\|^{2}\geq \left(\frac{1}{2}-\frac{1}{2p} \right)\tau>0, \quad \forall w \in {\mathcal M}.
$$
and so,  $c_0 \geq \left(\frac{1}{2}-\frac{1}{2p} \right)\tau>0$.
\end{proof}

Let us state more a technical lemma.

 \begin{lemma}\label{lema2x}
Let $0<\mu<3$, $2 \leq p<6-\mu$ and $(w_n)$ be a bounded sequence in  ${\mathcal M}$ with $w_n \rightharpoonup w$ in $H^{1}_0(\Omega)$. If $\|w_{n,j}\| \not\to 0$, then $w_{j} \not= 0$ , where $w_{n,j}=w_n|_{\Omega_j}$ and $w_{j}=w|_{\Omega_j}$ for $j=1,2$.
\end{lemma}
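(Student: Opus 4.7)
The plan is to argue by contradiction: suppose $w_j=0$ even though $\|w_{n,j}\|$ does not tend to zero, so that, after passing to a subsequence, $\|w_{n,j}\|_j \geq \delta > 0$ for some $\delta>0$. Testing the weak convergence $w_n\rightharpoonup w$ against functions in $H^1_0(\Omega_j)$ extended by zero to the other component, one deduces $w_{n,j}\rightharpoonup 0$ in $H^1_0(\Omega_j)$. Rellich--Kondrachov then yields $w_{n,j}\to 0$ strongly in $L^q(\Omega_j)$ for every $q\in[1,6)$.

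Next, I would exploit the membership $w_n\in\mathcal{M}$, which by definition forces $I'(w_n)w_{n,j}=0$. Since $w_{n,j}$ is supported in $\Omega_j$ and coincides there with $w_n$, this identity simplifies to
$$
\|w_{n,j}\|_j^{2}=\int_{\Omega_j}|w_{n,j}(x)|^{p}\left(\int_{\Omega}\frac{|w_n(y)|^{p}}{|x-y|^{\mu}}\,dy\right)dx.
$$
Applying the Hardy--Littlewood--Sobolev inequality (Proposition~\ref{HLS}) with conjugate exponents $r=s=6/(6-\mu)$, the right-hand side is dominated by
$$
C\,\|w_{n,j}\|_{pr}^{p}\,\|w_n\|_{pr}^{p},\qquad \text{where } pr=\frac{6p}{6-\mu}.
$$
The hypothesis $p<6-\mu$ is precisely what guarantees $pr<6$, so the Sobolev embedding $H^1\hookrightarrow L^{pr}$ is compact on bounded domains. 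Hence $\|w_{n,j}\|_{pr}\to 0$, while $\|w_n\|_{pr}$ stays bounded by the assumed $H^1$-boundedness of $(w_n)$. Therefore $\|w_{n,j}\|_j^{2}\to 0$, contradicting $\|w_{n,j}\|_j\geq\delta>0$.

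The only point that needs care is the strict inequality $pr<6$, which is what unlocks the compactness of the embedding and hence the vanishing of $\|w_{n,j}\|_{pr}$; this is exactly what the subcritical assumption $p<6-\mu$ provides, so no additional machinery beyond HLS and Rellich--Kondrachov is required. I do not expect a serious obstacle here: the difficulty in the surrounding arguments lies in showing $\mathcal{M}\neq\emptyset$ and $c_0>0$ (already handled in Lemma~\ref{lema3}) and, later, in passing from weak convergence on $\mathcal{M}$ to strong convergence of a minimizer; this lemma is only the first compactness input used toward that end.
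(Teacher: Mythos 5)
Your proof is correct and follows essentially the same route as the paper: assume $w_j=0$, use the compact embedding $H^1_0(\Omega_j)\hookrightarrow L^{6p/(6-\mu)}(\Omega_j)$ (valid since $p<6-\mu$) together with Hardy--Littlewood--Sobolev to show the nonlocal term in $I'(w_n)w_{n,j}=0$ vanishes, and conclude $\|w_{n,j}\|_j\to 0$, a contradiction. The paper states this more tersely, but the exponent bookkeeping you supply ($r=s=6/(6-\mu)$, $pr=6p/(6-\mu)<6$) is exactly what its appeal to ``the Hardy-Littlewood-Sobolev inequality and the Sobolev embeddings'' implicitly relies on.
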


\begin{proof} Assume by contradiction that $w_1=0$.  By the Hardy-Littlewood-Sobolev inequality and the Sobolev embeddings, we see that
$$
\int_{\Omega_j}\left(\int_{\Omega}\frac{|w_n|^p}{|x-y|^{\mu}}dy\right)|w_{n,1}|^pdx \to 0.
$$
On the other hand, as $I'(w_n)(w_{n,j})=0$, or equivalently
$$
\|w_{n,1}\|_1^2=\int_{\Omega_1}\left(\int_{\Omega}\frac{|w_n|^p}{|x-y|^{\mu}}dy\right)|w_{n,1}|^pdx,
$$
we derive that
$$
\|w_{n,1}\|_1^2 \to 0
$$
which is an absurd. The case $w_2 \not= 0$ is made of similar way.
\end{proof}

Now, we are able to show the existence of least energy solution for $(C)_{\infty,\Gamma}$.

\subsection{ Proof of Theorem \ref{main1}}

From Lemma \ref{lema3}, $c_0>0$ and there is a sequence $(w_n) \subset \mathcal M$ such that
$$
\lim_{n} I(w_n)=c_0.
$$
It is easy to see that $(w_n)$ is a bounded
sequence. Hence, without loss of generality, we may suppose that $w_n \rightharpoonup w \,\,\, \mbox{in} \,\,\, H_0^{1}(\Omega)$ and $
w_n \to w \,\,\, \mbox{in} \,\,\,  L^q(\Omega) \,\,\ \forall \, q \in [1,6)$, as $n\to \infty$. By considering the function
$$
G(s_1,s_2)=I(s_1^{\frac{1}{p}}(w_n)_1+s_2^{\frac{1}{p}}(w_n)_2)=\frac{s_1^{\frac{2}{p}}}{2}\|(w_n)_1\|_1^{2}+\frac{s_2^{\frac{2}{p}}}{2}\|(w_n)_2\|_2^{2}-\frac{1}{2p}\int_{\Omega}\left(\frac{1}{|x|^{\frac{\mu}{2}}}*(s_1|(w_n)_1|^{p}+s_2|(w_n)_2|^{p})\right)^{2} dx.
$$
we know by the previous study that $\nabla G(1,1)=(0,0)$. As $G$ is strictly concave function, $(1,1)$ is its global maximum point. Thus,
$$
I(w_n)=I((w_n)_1+(w_n)_2)=\max_{t,s \geq 0}I(t(w_n)_1+s(w_n)_2).
$$
Using the above information, we also know that $w_j \not= 0$ for $j=1,2$. Then, by Lemma \ref{lema3} there are $t_1,t_2>0$ such that
$$
t_1w_1+t_2w_2 \in \mathcal{M},
$$
and so,
$$
c_0 \leq I(t_1w_1+t_2w_2).
$$
By using the fact that $w_n \rightharpoonup w$ in $H^{1}_{0}(\Omega)$ and the compact Sobolev embeddings, we get
$$
I(t_1w_1+t_2w_2) \leq \liminf_{n \to +\infty}I(t_1(w_n)_1+t_2(w_n)_2) \leq \liminf_{n \to +\infty}I(w_n)=c_0,
$$
from where it follows that
$$
c_0 = I(t_1w_1+t_2w_2) \quad \mbox{with} \quad t_1w_1+t_2w_2 \in \mathcal{M}.
$$

Now, we we will show that $w_*=t_1w_1+t_2w_2$ is a critical point for $I$. Assume by contradiction  that $\|I'(w_*)\|>0$ and fix $\alpha>0$ such that
$$
\|I'(w_*)\| \geq \alpha.
$$
Moreover, we will fix $r>0$ small enough such that if $(t,s) \in B={B}_{r}(1,1) \subset \mathbb{R}^2$, then there exists some $\epsilon_0 >0$ such that
\begin{equation} \label{ETA0}
I(t^{\frac{1}{p}}(w_*)_1+s^{\frac{1}{p}}(w_*)_2)<c_0 - 2\epsilon_0, \quad \forall (t,s) \in \partial B.
\end{equation}
In the sequel we fix $\epsilon \in (0,\epsilon_0)$ and $\delta>0$ small emough such that
$$
\|I'(u)\| \geq \frac{\alpha}{2} \geq \frac{4\epsilon}{\delta} \quad \forall u \in  I^{-1}\{[c_0-2\epsilon,c_0+2\epsilon]\} \cap S
$$
where
$$
S=\{t^{\frac{1}{p}}(w_*)_1+s^{\frac{1}{p}}(w_*)_2\,:\, (t,s) \in \overline{B} \}.
$$
By using the Deformation Lemma, there exists a continuous map $\eta : H^{1}_{0}(\Omega) \to H^{1}_{0}(\Omega)$,  such that
\begin{equation} \label{ETA1}
\eta(u)=u \quad \forall u \notin I^{-1}\{[c_0-2\epsilon,c_0+2\epsilon]\} \cap S
\end{equation}
and
\begin{equation} \label{ETA2}
\eta(I^{c_0+\epsilon} \cap S) \subset I^{c_0-\epsilon} \cap S_\delta
\end{equation}
where
$$
S_\delta=\{v \in H_{0}^{1}(\Omega) \,:\,dist(v,S) \leq \delta \}.
$$
In the sequel, we fix $\delta >0$ of a way that
\begin{equation} \label{ETA3}
v \in S_\delta \Rightarrow v_1,v_2 \not= 0.
\end{equation}
Now, setting $\gamma(t,s)=\eta(t^{\frac{1}{p}}(w_*)_1+s^{\frac{1}{p}}(w_*)_2)$, (\ref{ETA0}) and (\ref{ETA1}) imply that
\begin{equation} \label{ETA4}
\gamma(s,t)= t^{\frac{1}{p}}(w_*)_1+s^{\frac{1}{p}}(w_*)_2, \quad \forall (s,t) \in \partial B.
\end{equation}
Moreover, since
$$
\max_{t,s\geq 0}I(t^{\frac{1}{p}}(w_*)_1+s^{\frac{1}{p}}(w_*)_2)=I(t_1()w_*)_1+t_2(w_*)_2)=c_0,
$$
by (\ref{ETA2}), we know
$$
I(\gamma(t,s)) \leq c_0-\epsilon.
$$
\begin{claim} \label{CL1}
There is $(t_0,s_0) \in B$ such that
$$
\big(I'(\gamma(t_0,s_0))(\gamma(t_0,s_0)_1),I'(\gamma(t_0,s_0))(\gamma(t_0,s_0)_2)\big)=(0,0).
$$
\end{claim}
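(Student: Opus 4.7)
The plan is to reduce Claim \ref{CL1} to a two-dimensional Brouwer degree argument. Define
$$
\Phi\colon \overline{B} \to \R^2,\qquad \Phi(t,s) = \bigl( I'(\gamma(t,s))\,\gamma(t,s)_1,\; I'(\gamma(t,s))\,\gamma(t,s)_2 \bigr).
$$
Continuity of $\Phi$ on $\overline B$ is automatic: $\eta$ is continuous by the Deformation Lemma, $I \in C^{1}$, and the restriction $v \mapsto v|_{\Omega_i}$ is a bounded linear operator $H^{1}_0(\Omega) \to H^{1}_0(\Omega_i)$. It therefore suffices to exhibit $\deg(\Phi,B,0) \ne 0$, which produces the required $(t_0,s_0)\in B$ with $\Phi(t_0,s_0)=(0,0)$.

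The decisive step is to analyse $\Phi$ on $\partial B$. By property (\ref{ETA4}) the deformation acts there as the identity, so $\gamma(t,s) = t^{1/p}(w_*)_1 + s^{1/p}(w_*)_2$ and consequently $\gamma(t,s)_1 = t^{1/p}(w_*)_1$, $\gamma(t,s)_2 = s^{1/p}(w_*)_2$. Re-using the function
$$
G(t,s) = I\bigl(t^{1/p}(w_*)_1 + s^{1/p}(w_*)_2\bigr)
$$
already exploited in Lemma \ref{lema3} and in the proof of Theorem \ref{main1}, a direct chain-rule computation identifies
$$
\Phi(t,s)\big|_{\partial B} = \bigl( pt\,\partial_t G(t,s),\; ps\,\partial_s G(t,s)\bigr).
$$
Since $G$ is strictly concave with unique global maximum (and thus unique critical point) at $(1,1)$, choosing $r$ sufficiently small guarantees that $\overline B$ stays in the open positive quadrant and that $\nabla G$ does not vanish on $\partial B$. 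In particular $pt, ps>0$ throughout $\partial B$, so $\Phi\big|_{\partial B}$ is nowhere zero and $\deg(\Phi,B,0)$ is well-defined.

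To evaluate the degree I will use two nonvanishing homotopies on $\partial B$. First,
$$
\tau \longmapsto \bigl( ((1-\tau)+\tau pt)\,\partial_t G(t,s),\;((1-\tau)+\tau ps)\,\partial_s G(t,s)\bigr),\quad \tau\in[0,1],
$$
deforms $\nabla G$ into $\Phi|_{\partial B}$; for $r$ small the two scale factors are strictly positive, so the homotopy avoids $0$. Second, the straight-line homotopy
$$
(1-\tau)\nabla G(t,s) + \tau\bigl((1,1)-(t,s)\bigr)
$$
deforms $\nabla G$ into the radial-inward field $(1,1)-(t,s)$; it avoids $0$ because strict concavity of $G$ gives $\langle \nabla G(t,s),(1,1)-(t,s)\rangle>0$ whenever $(t,s)\ne(1,1)$, so both endpoints have strictly positive inner product with $(1,1)-(t,s)$. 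The radial-inward map has Brouwer degree $1$, so $\deg(\Phi,B,0)=1\ne 0$, producing $(t_0,s_0)\in B$ with $\Phi(t_0,s_0)=(0,0)$ as required.

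The main obstacle is cleanly reducing the claim to properties of the scalar function $G$: namely, strict concavity and the inward-pointing condition for $\nabla G$ on $\partial B$. For $p>2$ both properties follow easily from the strict concavity of $s \mapsto s^{2/p}$ together with the fact that the nonlocal term in $G$ is minus the square of a linear functional of $(s_1,s_2)$; the case $p=2$ would require the nondegeneracy hypothesis already used in Lemma \ref{lema3}, consistent with Remark \ref{Re2}. Once these properties are in hand, the rest is a routine Brouwer-degree homotopy argument in $\R^2$.
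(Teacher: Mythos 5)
Your proof is correct and follows essentially the same route as the paper: both arguments use (\ref{ETA4}) to identify the boundary values of the vector field with a positive componentwise rescaling of $\nabla G$, use strict concavity of $G$ to get the inward-pointing condition on $\partial B$, and then invoke a topological existence result in $\R^2$. The only cosmetic difference is that the paper normalizes the field by $1/t$ and $1/s$ and quotes Brouwer's fixed point theorem for an inward-pointing field, whereas you keep the unnormalized field and compute the Brouwer degree via two explicit homotopies.
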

Assuming for a moment the claim is true, we deduce that $\gamma(t_0,s_0) \in \mathcal{M}$, and so,
$$
c_0 \leq I(\gamma(t_0,s_0)) \leq c_0 - \epsilon
$$
which is absurd. Here,  (\ref{ETA3}) was  used to ensure that $\gamma(t_0,s_0)_j \not=0$ for $j=1,2$. Consequently, $w_*=t_1w_1+t_2w_2$ is a critical point for $I$.

\vspace{0.5 cm}

\noindent {\bf Proof of Claim \ref{CL1}:}\, First of all, note that
$$
\big(I'(\gamma(t,s))(\gamma(t,s)_1),I'(\gamma(t,s))(\gamma(t,s)_2)\big)=(0,0) \Leftrightarrow \big(\frac{1}{t}I'(\gamma(t,s))(\gamma(t,s)_1),\frac{1}{s}I'(\gamma(t,s))(\gamma(t,s)_2)\big)=(0,0)
$$
and by (\ref{ETA4})
$$
\big(\frac{1}{t}I'(\gamma(t,s))(\gamma(t,s)_1),\frac{1}{s}I'(\gamma(t,s))(\gamma(t,s)_2)\big)=\big(\frac{1}{t}I'(\gamma_0(t,s))(\gamma_0(t,s)_1),\frac{1}{s}I'(\gamma_0(t,s))(\gamma_0(t,s)_2)\big) \quad \forall (t,s) \in \partial B
$$
where
$$
\gamma_0(t,s)=t^{\frac{1}{p}}(w_*)_1+s^{\frac{1}{p}}(w_*)_2 \quad \forall (s,t) \in \overline{B}.
$$
Considering  the function
$$
G(t,s)=I(t^{\frac{1}{p}}(w_*)_1+s^{\frac{1}{p}}(w_*)_2),
$$
we deduce that
$$
(\frac{1}{t}I'(\gamma_0(t,s))(\gamma_0(t,s)_1),\frac{1}{s}I'(\gamma_0(t,s))(\gamma_0(t,s)_2))=\nabla G(t,s) \quad \forall (t,s) \in \overline{B}.
$$
Since $G$ is is strictly concave function and $\nabla G(1,1)=(0,0)$, it follows that
$$
0>\left\langle \nabla G(t,s)-  \nabla G(1,1),(t,s)-(1,1)\right\rangle = \left\langle \nabla G(t,s),(t,s)-(1,1)\right\rangle \quad \forall (s,t) \not= (1,1),
$$
and so,
$$
0>\left\langle \nabla G(t,s),(t,s)-(1,1)\right\rangle \quad \mbox{for} \quad |(s,t) -(1,1)|=r.
$$
Setting $H:\mathbb{R}^{2} \to \mathbb{R}^2$ by
$$
H(t,s)=\big(\frac{1}{t}I'(\gamma(t,s))(\gamma(t,s)_1),\frac{1}{s}I'(\gamma(t,s))(\gamma(t,s)_2)\big)
$$
and
$f(t,s)=H(t+1,s+1)$, we have that
$$
0>\left\langle f(t,s),(t,s)\right\rangle \quad \mbox{for} \quad |(s,t)|=r.
$$
By using the Brouwer's fixed point Theorem, we know there exists $(t_*,s_*) \in B_r(0,0)$ such that $f(t_*,s_*)=(0,0)$, that is,
$$
H(t_*+1,s_*+1)=(0,0),
$$
from where it follows that there is $(t_0,s_0) \in B$ such that
$$
\big(I'(\gamma(t,s))(\gamma(t,s)_1),I'(\gamma(t,s))(\gamma(t,s)_2)\big)=(0,0),
$$
which completes the proof of the claim.

\section{The $(PS)_c$ condition for $I_\lambda$}

In this section, we will prove some convergence properties for the  $(PS)$ sequences of the functional $I_\lambda$. Our main goal is to prove that, for given $c\geq 0$ independent of $\lambda$, the functional $I_{\lambda}$ satisfies the $(PS)_d$ condition for $d \in [0,c)$, provided that  $\lambda$ is large enough.
\begin{lemma}\label{al1}
	Let $(u_n) \subset E_{\lambda}$ be a $(PS)_c$ sequence for
	$I_{\lambda},$ then $(u_n)$ is bounded. Furthermore, $c\geq 0.$
\end{lemma}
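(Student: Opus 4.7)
The plan is to run the standard Ambrosetti--Rabinowitz-type computation, exploiting the fact that $p > 2$ so the constant $\tfrac{1}{2} - \tfrac{1}{2p}$ is strictly positive. For a $(PS)_c$ sequence $(u_n)$, I would compute the combination
\[
I_\lambda(u_n) - \frac{1}{2p}I_\lambda'(u_n)u_n = \left(\frac{1}{2} - \frac{1}{2p}\right)\|u_n\|_\lambda^2,
\]
since the nonlocal terms $\tfrac{1}{2p}\int (|x|^{-\mu}*|u_n|^p)|u_n|^p$ cancel exactly (this is the key algebraic identity that drives the whole argument).

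For boundedness, I would bound the left-hand side by $|I_\lambda(u_n)| + \tfrac{1}{2p}\|I_\lambda'(u_n)\|_{E_\lambda^*}\|u_n\|_\lambda$. Using $I_\lambda(u_n) \to c$ and $\|I_\lambda'(u_n)\|_{E_\lambda^*} \to 0$, for $n$ large we get an estimate of the form
\[
\left(\frac{1}{2} - \frac{1}{2p}\right)\|u_n\|_\lambda^2 \le |c| + 1 + \varepsilon_n \|u_n\|_\lambda,
\]
with $\varepsilon_n \to 0$. Since the coefficient of $\|u_n\|_\lambda^2$ is strictly positive (because $p > 2$), this quadratic inequality in $\|u_n\|_\lambda$ immediately forces $(\|u_n\|_\lambda)$ to be bounded.

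For $c \ge 0$, once boundedness is established, the quantity $I_\lambda'(u_n)u_n$ satisfies $|I_\lambda'(u_n)u_n| \le \|I_\lambda'(u_n)\|_{E_\lambda^*}\|u_n\|_\lambda \to 0$. Passing to the limit in the same identity yields
\[
c = \lim_{n\to\infty} I_\lambda(u_n) = \lim_{n\to\infty}\left(\frac{1}{2} - \frac{1}{2p}\right)\|u_n\|_\lambda^2 \ge 0,
\]
which finishes the proof. There is no real obstacle here: the argument is purely algebraic and uses nothing about the nonlocal nonlinearity beyond its $2p$-homogeneity; no Hardy--Littlewood--Sobolev estimate or compactness is required at this stage, and the parameter $\lambda$ plays no role in the proof itself (the constant in the bound will, of course, depend on $\lambda$ through $\|\cdot\|_\lambda$, but the boundedness is uniform in $n$ for each fixed $\lambda$).
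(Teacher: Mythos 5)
Your proof is correct and follows essentially the same route as the paper: both rest on the identity $I_\lambda(u_n)-\frac{1}{2p}I'_\lambda(u_n)u_n=\left(\frac{1}{2}-\frac{1}{2p}\right)\|u_n\|_\lambda^2$, derive boundedness from the resulting quadratic inequality, and then pass to the limit in the same identity to get $c\ge 0$. The only (harmless) cosmetic difference is that you write $|c|+1$ where the paper writes $c+1$, which is in fact slightly more careful since $c\ge 0$ is not yet known at that stage.
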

\begin{proof}
Since $(u_n)$ is a $(PS)_c$ sequence,
	$$
	I_{\lambda}(u_n)\rightarrow c\ \mbox{and}\ I'_{\lambda}(u_n) \rightarrow 0.
	$$
	Then, for $n$ large enough
	\begin{equation}\label{3e1}
	I_{\lambda}(u_n)-\frac{1}{2p}I'_{\lambda}(u_n)u_n \leq c+1+\|u_n\|_{\lambda}.
	\end{equation}
	On the other hand,
	\begin{equation}\label{3e2}
	I_{\lambda}(u_n)-\frac{1}{2p}I'_{\lambda}(u_n)u_n=\left( \frac{1}{2}-\frac{1}{2p}\right)\|u_n\|^2_{\lambda}.
	\end{equation}
	Therefore, from $(\ref{3e1})$ and $(\ref{3e2})$ we get the inequality below
	$$
	\left( \frac{1}{2}-\frac{1}{2p}\right)\|u_n\|^2 _{\lambda} \leq c+1+\|u_n\|_{\lambda},
	$$
	which shows the boundedness of $(u_n)$. Thereby,  by (\ref{3e2}),
	\begin{equation}\label{3e3}
		0\leq\left(
	\frac{1}{2}-\frac{1}{2p}\right)\|u_n\|^2 _{\lambda} \leq
	c+o_n(1),
	\end{equation}
and the lemma follows by taking the limit of $n \to +\infty$.
\end{proof}
\begin{corollary}\label{ac1}
	Let $(u_n) \subset E_{\lambda}$ be a $(PS)_0$ sequence for
	$I_{\lambda}.$ Then $u_n\rightarrow 0$ in $E_\lambda$.
\end{corollary}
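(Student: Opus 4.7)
The corollary is essentially immediate from the proof of Lemma \ref{al1}; the key point is that when $c=0$ the estimate \eqref{3e3} collapses. Here is how I would organize the argument.

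First I would invoke Lemma \ref{al1} to obtain that $(u_n)$ is bounded in $E_\lambda$. Since $(u_n)$ is bounded and $I_\lambda'(u_n) \to 0$ in $E_\lambda^{*}$, the Cauchy--Schwarz type estimate $|I_\lambda'(u_n)u_n| \leq \|I_\lambda'(u_n)\|_{E_\lambda^*}\,\|u_n\|_\lambda$ yields $I_\lambda'(u_n)u_n = o_n(1)$. Combined with the hypothesis $I_\lambda(u_n) \to 0$, this gives
$$
I_\lambda(u_n) - \tfrac{1}{2p}I_\lambda'(u_n)u_n = o_n(1).
$$

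Next I would invoke the identity \eqref{3e2} already established in the proof of Lemma \ref{al1}, namely
$$
I_\lambda(u_n) - \tfrac{1}{2p}I_\lambda'(u_n)u_n = \left(\tfrac{1}{2} - \tfrac{1}{2p}\right)\|u_n\|_\lambda^{2}.
$$
Since $p>1$, the constant $\tfrac{1}{2}-\tfrac{1}{2p}$ is strictly positive, so comparing the two displays gives $\|u_n\|_\lambda^{2} \to 0$, i.e. $u_n \to 0$ in $E_\lambda$.

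There is no genuine obstacle here: the whole point is that \eqref{3e2} is an \emph{equality}, not just an inequality, so the nonlocal term never needs to be analyzed directly. Strong convergence in $E_\lambda$ thus follows without any use of the Hardy--Littlewood--Sobolev inequality or of local compactness, and in particular without any restriction on $\lambda$.
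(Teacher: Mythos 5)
Your argument is correct and is exactly the one the paper intends: its proof of Corollary \ref{ac1} simply points back to Lemma \ref{al1}, where the identity \eqref{3e2} together with the resulting bound \eqref{3e3} gives $\left(\tfrac{1}{2}-\tfrac{1}{2p}\right)\|u_n\|_\lambda^2 \le c + o_n(1)$, which with $c=0$ yields $u_n \to 0$ in $E_\lambda$. You have merely spelled out the same computation in full detail.
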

\begin{proof}
	An immediate consequence of the arguments used in the proof of Lemma $\ref{al1}$.
\end{proof}

Next we prove a splitting property for the functional $I_{\lambda}$, which is related to the Brezis-Lieb type Lemma for nonlocal nonlinearities \cite{AC, MS2}.
\begin{lemma}\label{al2}
Let $c \geq 0$ and $(u_n)$ be a $(PS)_c$ sequence for
	$I_{\lambda}.$ If $u_n \rightharpoonup u$ in $E_{\lambda},$ then
	\begin{eqnarray}
	I_{\lambda}(v_n)-I_{\lambda}(u_n)+I_{\lambda}(u) &=&o_n(1) \label{3e4}\\
	I'_{\lambda}(v_n)-I'_{\lambda}(u_n)+I' _{\lambda}(u) &=&o_n(1),\label{3e5}
	\end{eqnarray}
	where $v_n=u_n-u.$ Furthermore, $(v_n)$ is a
	$(PS)_{c-I_{\lambda}(u)}$ sequence.
\end{lemma}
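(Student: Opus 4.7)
The strategy is to split $I_\lambda$ into its quadratic Hilbert part, which decomposes exactly modulo $o_n(1)$ by weak convergence, and the nonlocal Choquard term, for which I will invoke a Brezis--Lieb type splitting adapted to the Riesz potential. Since $E_\lambda$ is a Hilbert space with inner product $\langle u,v\rangle_\lambda=\int_{\R^3}(\nabla u\cdot\nabla v+(\lambda a(x)+1)uv)\,dx$ and $u_n\rightharpoonup u$, the polarization identity gives $\|u_n\|_\lambda^2=\|v_n\|_\lambda^2+\|u\|_\lambda^2+2\langle v_n,u\rangle_\lambda$ with $\langle v_n,u\rangle_\lambda\to 0$, so the quadratic part splits as required; and for every $\phi\in E_\lambda$ the identity $\langle u_n-v_n-u,\phi\rangle_\lambda=0$ holds exactly. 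Hence both \eqref{3e4} and \eqref{3e5} reduce to establishing the corresponding splitting for the Choquard term.

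For the nonlocal part, set $A(w)=\int_{\R^3}(|x|^{-\mu}\ast|w|^p)|w|^p\,dx$; the goal is $A(u_n)=A(v_n)+A(u)+o_n(1)$. Passing to a subsequence so that $u_n\to u$ a.e.\ (Rellich on balls plus a diagonal argument) and using the boundedness of $(u_n)$ in $L^{6p/(6-\mu)}(\R^3)$ granted by Sobolev embedding (our range $p\in(2,6-\mu)$ makes $6p/(6-\mu)\in(2,6)$), a refined Brezis--Lieb lemma yields $|u_n|^p-|v_n|^p\to|u|^p$ strongly in $L^{6/(6-\mu)}(\R^3)$. Expanding $A(u_n)$ by bilinearity of the Riesz form and applying Hardy--Littlewood--Sobolev term by term, the diagonal contributions produce exactly $A(v_n)+A(u)$, while the cross terms have the form $\int(|x|^{-\mu}\ast|v_n|^p)|u|^p\,dx$, which vanish because $|v_n|^p\rightharpoonup 0$ in $L^{6/(6-\mu)}(\R^3)$ (from a.e.\ convergence $v_n\to 0$ combined with uniform $L^{6p/(6-\mu)}$ bounds) and $|u|^p$ sits in the dual via HLS. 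This proves \eqref{3e4}. The same scheme applied to the derivative, now using the analogous strong convergence $|u_n|^{p-2}u_n-|v_n|^{p-2}v_n\to|u|^{p-2}u$ in the Lebesgue space dual (via HLS) to $\phi\mapsto(|x|^{-\mu}\ast|u|^p)\phi$, yields \eqref{3e5} uniformly in $\phi\in E_\lambda$ with $\|\phi\|_\lambda\le 1$.

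To conclude that $(v_n)$ is a $(PS)_{c-I_\lambda(u)}$ sequence, I first observe that $u$ is itself a critical point of $I_\lambda$: for $\phi\in C_c^\infty(\R^3)$, weak convergence in $E_\lambda$ combined with local compactness of the convolution integrand gives $I_\lambda'(u_n)\phi\to I_\lambda'(u)\phi$, and since $I_\lambda'(u_n)\to 0$, density yields $I_\lambda'(u)=0$. Then \eqref{3e4} gives $I_\lambda(v_n)\to c-I_\lambda(u)$ and \eqref{3e5} combined with $I_\lambda'(u)=0$ gives $\|I_\lambda'(v_n)\|_{E_\lambda^*}\to 0$. The main technical obstacle is the Brezis--Lieb splitting of $A$ in precisely the $L^{6/(6-\mu)}$ space dictated by HLS and the vanishing of the cross terms in the bilinear expansion; this is a Choquard-specific refinement of the classical Brezis--Lieb lemma in the spirit of \cite{AC, MS2}, so the proof amounts to carefully combining these known nonlocal ingredients with the Hilbert-space structure of $E_\lambda$.
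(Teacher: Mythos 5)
Your proposal is correct and follows essentially the same route as the paper: the quadratic part splits by the Hilbert structure of $E_\lambda$, and the Choquard term splits via the convergence $|u_n|^p-|v_n|^p-|u|^p\to 0$ in $L^{6/(6-\mu)}(\R^3)$ together with the vanishing of the cross term $\int(|x|^{-\mu}\ast|v_n|^p)|u|^p\,dx$ by weak convergence of $|v_n|^p$ and the Hardy--Littlewood--Sobolev continuity. The only cosmetic difference is that you invoke the refined Brezis--Lieb lemma as a known ingredient, whereas the paper proves that $L^{6/(6-\mu)}$ convergence by hand via the $B_R(0)$/complement splitting and dominated convergence.
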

\begin{proof}
	First of all, note that
\begin{align*}
	I_{\lambda}(v_n)-I_{\lambda}(u_n)+I_{\lambda}(u) &=\frac{1}{2}\left( \|v_n\|^2_{\lambda}-\|u_n\|^2_{\lambda}+\|u\|^2_{\lambda}\right)-&\\
	&-\frac{1}{2p}\int_{\mathbb{R}^3}\left(\int_{\mathbb{R}^3} \frac{\left|v_n(y) \right|^p \left|v_n(x) \right|^p-\left|u_n(y) \right|^p \left|u_n(x) \right|^p+\left|u(y) \right|^p \left|u(x) \right|^p}{\left|x-y \right|^{\mu} }dy\right) dx.&
\end{align*}
Since $u_n \rightharpoonup u$ in  $E_{\lambda}$, we have
\begin{eqnarray}
 I_{\lambda}(v_n)-I_{\lambda}(u_n)+I_{\lambda}(u) =& o_n(1)+\frac{1}{2p}\int_{\mathbb{R}^3}\left( \int_{\mathbb{R}^3} \frac{ \left|v_n(y) \right|^p\left( -\left|v_n(x) \right|^p +\left|u_n(x) \right|^p -\left|u(x) \right|^p \right)}{\left|x-y \right|^{\mu} }dy\right) dx\,\nonumber\\
 &+\frac{1}{2p}\int_{\mathbb{R}^3}\left(\int_{\mathbb{R}^3} \frac{\left|u_n(y) \right|^p\left(  -\left|v_n(x) \right|^p +\left|u_n(x) \right|^p -\left|u(x) \right|^p \right)}{\left|x-y \right|^{\mu} }dy\right)dx \label{3e6}\\
  &+\,\frac{1}{2p}\int_{\mathbb{R}^3}\left(\,\int_{\mathbb{R}^3} \frac{  \left|u(y) \right|^p\left(-\left|v_n(x) \right|^p +\left|u_n(x) \right|^p -\left|u(x) \right|^p \right)}{\left|x-y \right|^{\mu} }dy\,\right)dx\,\nonumber\\
  &+\frac{1}{p}\int_{\mathbb{R}^3}\,\,\,\left(\int_{\mathbb{R}^3}\frac{|v_n(y)|^p|u(x)|^p}{|x-y|^\mu}dy\right)dx.\,\,\,\,\,\,\,\,\,\,\,\,\,\,\,\,\,\,\,\,\,\,\,\,\,\,\,\,\,\,\,\,\,\,\,\,\,\,\,\,\,\,\,\,\,\,\,\,\,\,\,\,\,\,\,\,\,\,\,\,\,\,\,\, \nonumber
 \end{eqnarray}
By the Hardy-Litllewood-Sobolev inequality,
  \begin{eqnarray*}
  \int_{\mathbb{R}^3} \left(\int_{\mathbb{R}^3}\frac{\left|v_n(y) \right|^p\Big( \left|v_n(x) \right|^p -\left|u_n(x) \right|^p +\left|u(x) \right|^p \Big) }{\left|x-y \right|^{\mu}}dy\right)dx
  \leq C \left| v_n\right|^p_{\frac{6p}{6-\mu}} \left( \int_{\mathbb{R}^3}\big| |v_n(x) |^p -|u_n(x) |^p +|u(x) |^p\big|^{\frac{6}{6-\mu}}dx\right)^\frac{6-\mu}{6}.
  \end{eqnarray*}
Notice that
 \begin{eqnarray}
\int_{\mathbb{R}^3}\Big| |v_n(x) |^p -|u_n(x) |^p +|u(x) |^p\Big|^{\frac{6}{6-\mu}}dx=&\int_{B_R(0)}\Big| |v_n(x) |^p -|u_n(x) |^p +|u(x) |^p\Big|^{\frac{6}{6-\mu}}dx+\,\,\,\,\,\,\,\,\,\,\,\,\,\,\,\,\,\,\,\,\,\,\,\,\label{3e7}\\
&\int_{\mathbb{R}^3\setminus B_R(0)}\Big| |v_n(x) |^p -|u_n(x)|^p +|u(x) |^p\Big|^{\frac{6}{6-\mu}}dx.\,\,\,\,\,\,\,\,\,\,\,\,\,\,\,\,\,\,\,\,\,\,\,\,\nonumber
\end{eqnarray}
where, $R>0$  will be  fixed subsequently. As $u_n \rightharpoonup u$ in $E_\lambda$, we know that
 \begin{center}
 	\begin{itemize}
 		\item	$u_n \rightarrow u,\ \mbox{in}\ L^{\frac{6p}{6-\mu}}(B_R(0));$
 		\item $u_n(x) \rightarrow u(x)\ \mbox{a.e. in} \ \mathbb{R}^3,$
 	\end{itemize}
 \end{center}
and  there is $h_1 \in L^{\frac{6p}{6-\mu}}(B_R(0))$ such that
$$
\left|u_n(x)\right|\leq h_1(x) \quad \mbox{a.e. in} \ \mathbb{R}^3.
$$
From this,
$$
\left|v_n(x) \right|^p -\left|u_n(x) \right|^p +\left|u(x) \right|^p  \rightarrow 0\ a.e.\ \mbox{in}\ \mathbb{R}^3,
$$
and
$$\Big| \left|v_n(x) \right|^p -\left|u_n(x) \right|^p +\left|u(x) \right|^p\Big|^{\frac{6}{6-\mu}}\leq (2^p+1)^{\frac{6}{6-\mu}}\left( h_1(x)+\left|u(x) \right|\right)^\frac{6p}{6-\mu}\in L^1(B_R(0)).
$$
Thus, by the Lebesgue Dominated Convergence Theorem,
\begin{equation}\label{3e8}
\int_{B_R(0)}\Big| \left|v_n(x) \right|^p -\left|u_n(x) \right|^p +\left|u(x) \right|^p\Big|^{\frac{6}{6-\mu}}dx \rightarrow 0.
\end{equation}
Furthermore, we also have
$$
\Big|\ \left|u_n(x)-u(x) \right|^p -\left|u_n(x) \right|^p\ \Big| \leq p2^{p-1}(\left|u_n(x) \right|^{p-1}\left|u(x) \right|+\left|u(x) \right|^{p}),
$$
and so,
$$
\int_{\mathbb{R}^3\setminus B_R(0)}\Big| \left|u_n(x)-u(x) \right|^p -\left|u_n(x) \right|^p \Big|^{\frac{6}{6-\mu}}dx \leq C\int_{\mathbb{R}^3\setminus B_R(0)}\left|u_n(x) \right|^{\frac{6(p-1)}{6-\mu}}\left|u(x) \right|^{\frac{6}{6-\mu}}dx+ C\int_{\mathbb{R}^3\setminus B_R(0)}\left| u(x)\right|^{\frac{6p}{6-\mu}}dx.
$$
For $\varepsilon> 0$, we  can choose $R>0$  such that
$$
\int_{\mathbb{R}^3\setminus B_R(0)}\left| u(x)\right|^{\frac{6p}{6-\mu}}dx\leq \varepsilon,
$$
the H\"older inequality combined with the boundedness of $(u_n)$ implies that
\begin{equation}\label{3e9}
\int_{\mathbb{R}^3\setminus B_R(0)}\Big| \left|v_n(x) \right|^p -\left|u_n(x) \right|^p +\left|u(x) \right|^p\Big|^{\frac{6}{6-\mu}}dx\leq \varepsilon.
\end{equation}
Gathering together the boundedness of $(v_n)$ and (\ref{3e7})-(\ref{3e9}), we  deduce that
$$
\int_{\mathbb{R}^3}\Big| |v_n(x) |^p -|u_n(x) |^p +|u(x) |^p\Big|^{\frac{6}{6-\mu}}dx \to 0.
$$
To finish the proof, we need to prove that
$$
\int_{\mathbb{R}^3}\left(\int_{\mathbb{R}^3}\frac{|v_n(y)|^p|u(x)|^p}{|x-y|^\mu}dy\right)dx \to 0.
$$
Once $v_n \rightharpoonup 0$ in $E_{\lambda}$ and  $p \in (2, 6-\mu),$ the sequence $(|v_n|^p)$ is bounded in $ L^\frac{6}{6-\mu}(\R^3)$. As $v_n(x)\to 0$ a.e. in $\R^3$,  we ensure that $|v_n|^p$ converges weakly to $0$ in $ L^\frac{6}{6-\mu}(\R^3)$. Using again the Hardy-Littlewood-Sobolev inequality, we know that the linear functional $F:L^\frac{6}{6-\mu}(\R^3) \to \R$ defined by
$$
F(w)=\int_{\mathbb{R}^3}\big(\frac{1}{|x|^\mu}\ast w\big)|u(x)|^{p}dx
$$
is continuous. Consequently, $F(|v_n|^{p}) \to 0$, or equivalently,
$$
\int_{\mathbb{R}^3}\big(\frac{1}{|x|^\mu}\ast |v_n(x)|^p\big)|u(x)|^{p}dx\to0,
$$
and the proof is complete. \end{proof}

\begin{lemma}\label{al3}
	Let $(u_n)$ be a $(PS)_c$ sequence for $I_{\lambda}$. Then $c=0$, or there exists $c_*>0$, independent of $\lambda,$  such that  $c \geq c_*,$ for all $\lambda >0.$
\end{lemma}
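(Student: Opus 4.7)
The plan is to show that if the $(PS)_c$ limit $c$ is strictly positive, then the sequence $(u_n)$ is bounded away from zero in $E_\lambda$ by a constant independent of $\lambda$, and hence $c$ itself is bounded below by a $\lambda$-independent positive constant $c_*$.

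First I would combine the two defining properties of a $(PS)_c$ sequence. Since $(u_n)$ is bounded in $E_\lambda$ by Lemma \ref{al1}, the relation $I'_\lambda(u_n)u_n = o_n(1)$ yields
\[
\|u_n\|_\lambda^{2} = \int_{\R^3}\Big(\frac{1}{|x|^\mu}\ast |u_n|^p\Big)|u_n|^p\,dx + o_n(1).
\]
Combining with $I_\lambda(u_n) = c + o_n(1)$ gives, as in (\ref{3e3}), that $\big(\tfrac12-\tfrac{1}{2p}\big)\|u_n\|_\lambda^2 = c + o_n(1)$. Hence if $c=0$ then $u_n\to 0$ (recovering Corollary \ref{ac1}), while if $c>0$ then $\|u_n\|_\lambda^2\to \tfrac{2pc}{p-1}>0$.

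Next I would bound the nonlocal term from above in a way that does not see $\lambda$. Applying the Hardy--Littlewood--Sobolev inequality (Proposition \ref{HLS}) with $s=r=\tfrac{6}{6-\mu}$, which is admissible since $p\in(2,6-\mu)\subset\big(\tfrac{6-\mu}{3},6-\mu\big)$ ensures $\tfrac{6p}{6-\mu}\in(2,6)$, gives
\[
\int_{\R^3}\Big(\frac{1}{|x|^\mu}\ast |u_n|^p\Big)|u_n|^p\,dx \le C(\mu)\,|u_n|_{\frac{6p}{6-\mu}}^{2p}.
\]
The crucial observation is that $a(x)\ge 0$ and $\lambda\ge 0$ imply $\|u\|_{H^1(\R^3)}\le \|u\|_\lambda$, so the Sobolev embedding $H^1(\R^3)\hookrightarrow L^{6p/(6-\mu)}(\R^3)$ yields a constant $C_S>0$, independent of $\lambda$, with $|u_n|_{6p/(6-\mu)} \le C_S\|u_n\|_\lambda$. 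Hence
\[
\|u_n\|_\lambda^2 \le C\,\|u_n\|_\lambda^{2p} + o_n(1),
\]
for some $C=C(\mu,p,S)$ independent of $\lambda$.

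Finally, assume $c>0$. Since then $\|u_n\|_\lambda^2$ has a positive limit, dividing the previous inequality by $\|u_n\|_\lambda^2$ and passing to the limit gives $1\le C\lim_n \|u_n\|_\lambda^{2(p-1)}$, whence
\[
\lim_{n\to\infty}\|u_n\|_\lambda^{2} \;\ge\; C^{-1/(p-1)}.
\]
Multiplying by $\tfrac12-\tfrac{1}{2p}$ and using $c=(\tfrac12-\tfrac{1}{2p})\lim_n\|u_n\|_\lambda^{2}$ produces
\[
c \;\ge\; \Big(\tfrac12-\tfrac{1}{2p}\Big)C^{-1/(p-1)} \;=:\; c_* \;>\; 0,
\]
with $c_*$ depending only on $\mu,p$ and the Sobolev constant. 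No step here is subtle, as the argument is just the standard Nehari-type lower bound; the only point that must be checked carefully is that the constants $C(\mu)$ in Hardy--Littlewood--Sobolev and $C_S$ in the Sobolev embedding are genuinely independent of $\lambda$, which follows because the norm inequality $\|\cdot\|_{H^1}\le\|\cdot\|_\lambda$ holds uniformly in $\lambda\ge 0$ thanks to $a\ge 0$.
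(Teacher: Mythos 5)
Your proof is correct and follows essentially the same route as the paper: both arguments rest on the identity $c=\bigl(\tfrac12-\tfrac{1}{2p}\bigr)\lim_n\|u_n\|_\lambda^2$ together with the Hardy--Littlewood--Sobolev and Sobolev bounds, whose constants are $\lambda$-independent because $\|\cdot\|_{H^1}\le\|\cdot\|_\lambda$. The paper packages the final step as a contradiction with an explicit threshold $\delta$, whereas you extract the lower bound $\lim_n\|u_n\|_\lambda^2\ge C^{-1/(p-1)}$ directly, but the substance is identical.
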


\begin{proof}
	By Lemma $\ref{al1}$, we know  $c \geq 0$. Suppose that $c>0$. On one hand, we know
	\begin{align*}
	c+o_n(1)\|u_n\|_{\lambda}&=I_{\lambda}(u_n)-\frac{1}{2p}I'_{\lambda}(u_n)u_n
	\geq \left( \frac{p-1}{2p}\right)\|u_n\|^{2}_{\lambda},&
	\end{align*}
	equivalently,
	\begin{equation}\label{3e10}
	\limsup_{n \rightarrow + \infty} \|u_n\|_{\lambda}^2 \leq \frac{2pc}{p-1}.
	\end{equation}
	On the other hand, the Hardy-Littlewood-Sobolev inequality together with the Sobolev embedding theorems imply that
	$$
	I'_{\lambda}(u_n)u_n \geq \frac{1}{2}\|u_n\|_{\lambda}^2-K\|u_n\|_{\lambda}^{2p},
	$$
	where $K$ is a positive constant. Thus,  there exists $\delta>0$ such that
	\begin{equation}\label{3e11}
	I'_{\lambda}(u_n)u_n \geq
	\frac{1}{4}\left|\left|u_n\right|\right|_{\lambda}^2,\ \mbox{for}\
	\left|\left|u_n\right|\right|_{\lambda} < \delta.
	\end{equation}
	Consider $c_*= \delta^2\frac{p-1}{2p}$ and
	$c<c_*$. Then it follows that
	\begin{equation}\label{3e12}
	\|u_n\|_{\lambda}\leq \delta
	\end{equation}
	for $n $ large enough. Hence,
	$$
	I'_{\lambda}(u_n)u_n \geq
	\frac{1}{4}\|u_n\|_{\lambda}^2,
	$$
	and thus
	$$
	\|u_n\|_{\lambda}^2 \rightarrow 	0.
	$$
Thereby,
	$$
	I_{\lambda}(u_n) \rightarrow I_{\lambda}(0)=0,
	$$
which contradicts the fact that  $ (u_n) $ is a $ (PS) _c $ sequence with
	$c>0$. Therefore, $c \geq c_*.$
\end{proof}

\begin{lemma}\label{al4}
	Let $(u_n)$ be a $(PS)_c$ sequence for $I_{\lambda}.$ Then,
	there exists $\delta_0 >0$ independent
	of $\lambda,$ such that
	$$
	\liminf_{n
		\rightarrow + \infty}
	\left|u_n\right|_{\frac{6p}{6-\mu}}^{2p} \geq
	\delta_0c.
	$$
\end{lemma}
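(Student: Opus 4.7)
The plan is to combine two easy identities to push the entire ``energy'' into the nonlocal term, then bound that term from above by the $L^{\frac{6p}{6-\mu}}$ norm via Hardy--Littlewood--Sobolev.

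First, I would use the boundedness of $(u_n)$ furnished by Lemma \ref{al1}, together with $I'_\lambda(u_n)\to 0$, to obtain $I'_\lambda(u_n)u_n = o_n(1)$. Combining this with $I_\lambda(u_n)=c+o_n(1)$ gives
$$
I_\lambda(u_n) - \tfrac{1}{2}I'_\lambda(u_n)u_n = \tfrac{p-1}{2p}\int_{\R^3}\Big(\tfrac{1}{|x|^\mu}\ast|u_n|^p\Big)|u_n|^p\,dx = c + o_n(1),
$$
so the nonlocal term converges to $\tfrac{2p}{p-1}c$.

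Next, I would apply the Hardy--Littlewood--Sobolev inequality (Proposition \ref{HLS}) with $s=r=\tfrac{6}{6-\mu}$ (for which $\tfrac{1}{s}+\tfrac{\mu}{3}+\tfrac{1}{r}=2$) to $f=h=|u_n|^p$, obtaining
$$
\int_{\R^3}\Big(\tfrac{1}{|x|^\mu}\ast|u_n|^p\Big)|u_n|^p\,dx \;\le\; C(3,\mu,3)\,\bigl||u_n|^p\bigr|_{\frac{6}{6-\mu}}^{2} \;=\; C(3,\mu,3)\,|u_n|_{\frac{6p}{6-\mu}}^{2p}.
$$
Putting the two displays together yields
$$
|u_n|_{\frac{6p}{6-\mu}}^{2p} \;\ge\; \frac{1}{C(3,\mu,3)}\cdot\frac{2p}{p-1}\,c + o_n(1),
$$
and taking $\liminf$ as $n\to\infty$ gives the desired bound with $\delta_0 := \frac{2p}{(p-1)\,C(3,\mu,3)}$. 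Since the Hardy--Littlewood--Sobolev constant $C(3,\mu,3)$ depends only on $\mu$ (not on $\lambda$), the constant $\delta_0$ is independent of $\lambda$, as required.

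There is no real obstacle here: the main step is simply the correct choice of HLS exponents, and the identification of the limit of the nonlocal term from the Pohozaev-type combination $I_\lambda(u_n)-\tfrac{1}{2}I'_\lambda(u_n)u_n$. The one detail worth verifying is that the ``$o_n(1)$'' absorption works for $I'_\lambda(u_n)u_n$, which is immediate from boundedness of $(u_n)_\lambda$.
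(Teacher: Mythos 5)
Your proof is correct and follows essentially the same route as the paper: both isolate the nonlocal term via the combination $I_{\lambda}(u_n)-\tfrac{1}{2}I'_{\lambda}(u_n)u_n=\bigl(\tfrac{1}{2}-\tfrac{1}{2p}\bigr)\int_{\R^3}\bigl(\tfrac{1}{|x|^{\mu}}\ast|u_n|^{p}\bigr)|u_n|^{p}dx$ and then bound it by $|u_n|_{\frac{6p}{6-\mu}}^{2p}$ via Hardy--Littlewood--Sobolev with $s=r=\tfrac{6}{6-\mu}$. The only cosmetic difference is the explicit value of $\delta_0$ (yours is the sharp one the computation yields; the paper's smaller choice also works since $c\ge 0$), and your label $C(3,\mu,3)$ for the HLS constant should read $C\bigl(\tfrac{6}{6-\mu},\mu,\tfrac{6}{6-\mu}\bigr)$, which is immaterial since it is independent of $\lambda$ either way.
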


\begin{proof}
Note that
	\begin{equation*}
	c=\lim_{n \rightarrow +\infty} \left( I_{\lambda}(u_n)-\frac{1}{2}I'_{\lambda}(u_n)u_n\right)= \left( \frac{1}{2}-\frac{1}{2p}\right) \lim_{n \rightarrow +\infty}\int_{\R^3}\Big(\frac{1}{|x|^{\mu}}\ast|u_n|^{p}\Big)|u_n|^{p}dx,
	\end{equation*}
by the Hardy-Littlewood-Sobolev inequality, we obtain
	$$
	c \le \left( \frac{1}{2}-\frac{1}{2p}\right)K\liminf_{n \rightarrow +\infty}\left|u_n \right|_{\frac{6p}{6-\mu}}^{2p}.
	$$
	Therefore,	the conclusion follows by setting
	$$
	\delta_0=\left( \frac{p-1}{2p}\right)K^{-1}>0.
	$$
	\end{proof}

\begin{lemma}\label{al5}
	Let $c_1>0$ be a constant independent of $\lambda$. Given $\varepsilon>0$,
	there exist $\Lambda=\Lambda(\varepsilon)$ and $R=R(\varepsilon,c_1)$ such that,
	if $(u_n)$ is a $(PS)_c$ sequence for $I_\lambda$ with $c \in [0,c_1],$ then
	$$
	\limsup_{n \rightarrow +\infty}\left|u_n\right|_{\frac{6p}{6-\mu},B_R^c(0)}^{2p} \leq \varepsilon,\ \forall \lambda \geq \Lambda.
	$$
\end{lemma}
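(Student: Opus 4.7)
The plan is to exploit the key hypothesis $|\{x \in \R^3 : a(x) \leq M_0\}| < \infty$ combined with the fact, provided by Lemma \ref{al1}, that $\|u_n\|_\lambda^2$ is bounded by a constant depending only on $c_1$ (and not on $\lambda$). More precisely, from (\ref{3e3}) one has $\limsup_{n} \|u_n\|_\lambda^2 \leq \frac{2p c_1}{p-1} =: M(c_1)$ uniformly in $\lambda$. The strategy is to first control $u_n$ in $L^2(B_R^c(0))$ by splitting the integration domain with respect to the level set of $a$, and then upgrade to the desired $L^{6p/(6-\mu)}$ estimate by interpolation between $L^2$ and $L^6$.

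First I would write $A := \{x \in \R^3 : a(x) \leq M_0\}$ and decompose $B_R^c(0) = (B_R^c(0) \cap A) \cup (B_R^c(0) \cap A^c)$. On $B_R^c(0) \cap A^c$ one has $a(x) > M_0$, so
$$\int_{B_R^c(0) \cap A^c} u_n^2 \, dx \leq \frac{1}{\lambda M_0}\int_{\R^3} \lambda a(x) u_n^2 \, dx \leq \frac{M(c_1)}{\lambda M_0} + o_n(1).$$
On $B_R^c(0) \cap A$, since $|A| < \infty$ we have $|A \cap B_R^c(0)| \to 0$ as $R \to \infty$; H\"older's inequality together with the Sobolev embedding $E_\lambda \hookrightarrow L^6(\R^3)$, whose constant $C_S$ is independent of $\lambda$, yields
$$\int_{B_R^c(0) \cap A} u_n^2 \, dx \leq |A \cap B_R^c(0)|^{2/3} |u_n|_6^2 \leq C_S^2 \, |A \cap B_R^c(0)|^{2/3} \, \|u_n\|_\lambda^2.$$
Choosing first $R = R(\varepsilon, c_1)$ so large that $C_S^2 M(c_1) |A \cap B_R^c(0)|^{2/3}$ is as small as we wish, and then $\Lambda = \Lambda(\varepsilon)$ so large that $M(c_1)/(\lambda M_0)$ is also as small as we wish for $\lambda \geq \Lambda$, we make $\limsup_n |u_n|_{2, B_R^c(0)}^2$ arbitrarily small.

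To pass to the $L^{6p/(6-\mu)}$ norm, set $q = \frac{6p}{6-\mu}$. Since $p \in (2, 6-\mu)$ one has $q \in (2,6)$, so the interpolation inequality
$$|u_n|_{q, B_R^c(0)} \leq |u_n|_{2, B_R^c(0)}^{\theta} \, |u_n|_{6, B_R^c(0)}^{1-\theta}, \qquad \theta = \frac{6-q}{2q} \in (0,1),$$
is available. Raising to the power $2p$ and using $|u_n|_6 \leq C_S \|u_n\|_\lambda \leq C_S \sqrt{M(c_1)}$ gives
$$\limsup_n |u_n|_{q, B_R^c(0)}^{2p} \leq \bigl(\limsup_n |u_n|_{2, B_R^c(0)}^{2}\bigr)^{\theta p} \cdot C_S^{2p(1-\theta)} M(c_1)^{p(1-\theta)}.$$
Hence choosing the $L^2$ mass small enough (in a way depending on $\varepsilon$ and $c_1$) produces the desired inequality.

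The main point, rather than an obstacle, is the uniform-in-$\lambda$ nature of the two ingredients: the a priori bound on $\|u_n\|_\lambda$ provided by Lemma \ref{al1} depends only on $c_1$, and the Sobolev constant of $E_\lambda \hookrightarrow L^6(\R^3)$ is dominated by that of $H^1(\R^3)$. Once these are in place, the argument reduces to carefully tracking the dependencies so that both the large-$\lambda$ gain on $A^c$ and the vanishing-measure gain on $A$ combine to control the tail of $u_n$ uniformly.
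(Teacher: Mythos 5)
Your proposal is correct and follows essentially the same route as the paper: split $B_R^c(0)$ according to whether $a(x)$ exceeds $M_0$, use the $\lambda$-weight to kill the part where $a\geq M_0$, use the finite measure of $\{a\leq M_0\}$ together with H\"older and the $\lambda$-uniform Sobolev embedding on the other part, and then interpolate between $L^2$ and $L^6$ to reach the $L^{6p/(6-\mu)}$ norm. Your write-up is in fact more explicit than the paper's on the final interpolation step, which the paper only sketches.
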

\begin{proof}
	
	For $R >0$, consider
	$$A(R)=\{x \in \mathbb{R}^3/ \left|x\right|> R\ \mbox{and}\ a(x) \geq M_0\}$$
	and
	$$B(R)=\{x \in \mathbb{R}^3/ \left|x\right|> R\ \mbox{and}\ a(x) < M_0\}.$$
	Then,
	\begin{align}\label{3e13}
		\int_{A(R)}u_n^2dx &\leq \frac{1}{(\lambda M_0 +1)}\int_{\mathbb{R}^3}(\lambda a(x) +1)u_n^2dx&\nonumber\\
		&\leq \frac{1}{(\lambda M_0 +1)}\left|\left|u_n\right|\right|_{\lambda}^2&\\
		&\leq \frac{1}{(\lambda M_0 +1)}\left[ \left( \frac{1}{2}-\frac{1}{2p}\right)^{-1}c+o_n(1)\right]\nonumber&\\
		&\leq \frac{1}{(\lambda M_0 +1)}\left[ \left( \frac{1}{2}-\frac{1}{2p}\right)^{-1}c_1+o_n(1)\right].\nonumber&
	\end{align}
	Once $ c_1 $  is independent of $ \lambda, $  by $(\ref{3e13})$ there is  $\Lambda>0$ such that
	\begin{equation}\label{3e14}
		\limsup_{n\rightarrow +\infty}\int_{A(R)}u_n^2dx <\frac{\varepsilon}{2},\ \forall \lambda \ge \Lambda.
	\end{equation}
	On the other hand, using the H\"older inequality for $s\in \left[1,3\right]$ and the continuous embedding $E_{\lambda}\hookrightarrow L^{2s}(\mathbb{R}^3)$, we see that
	\begin{align*}
		\int_{B(R)}u_n^2dx&\leq \beta \left|\left|u_n\right|\right|_{\lambda}^2\left| B(R)\right|^{\frac{1}{s'}} \leq c_1\left( \frac{1}{2}-\frac{1}{2p}\right)^{-1}\left|
		B(R)\right|^{\frac{1}{s'}}+o_n(1),&
	\end{align*}
	where $\beta$ is a positive constant. Now, by assumption (\ref{a3}) on the potential $a(x)$, we know that
	$$
	\left|B(R)\right|\rightarrow 0,\ \mbox{when}\ R \rightarrow +\infty,
	$$
	then we can choose $R$ large enough such that
	\begin{equation}\label{3e15}
		\limsup_{n \rightarrow +\infty}\int_{B(R)}u_n^2dx< \frac{\varepsilon}{2}.
	\end{equation}
Using (\ref{3e14}) and (\ref{3e15}), we obtain that
	\begin{equation*}
	\limsup_{n \rightarrow +\infty}\int_{\mathbb{R}^3}u_n^2dx< \varepsilon.
	\end{equation*}
	The last inequality combined with interpolation implies that
	\begin{equation*}
	\limsup_{n \rightarrow +\infty}\int_{\mathbb{R}^3\setminus B_R(0)}|u_n|^{\frac{6p}{6-\mu}}dx<\varepsilon,\ \lambda>\Lambda,
	\end{equation*}
	by increasing $R$ and $\Lambda$ if necessary.
\end{proof}
\begin{proposition}\label{p1}
	Given $c_1>0,$ independent of $\lambda$, there exists $\Lambda=\Lambda(c_1)>0$ such that if  $\lambda
	\geq \Lambda$, then $I_{\lambda}$
	verifies the  $(PS)_c$ condition for all $c \in [0,c_1]$.
\end{proposition}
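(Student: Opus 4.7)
The plan is to take an arbitrary $(PS)_c$ sequence $(u_n)\subset E_\lambda$ with $c\in[0,c_1]$ and show that, for $\lambda$ large, it converges strongly along a subsequence. By Lemma~\ref{al1}, $(u_n)$ is bounded, so up to extracting a subsequence $u_n\rightharpoonup u$ in $E_\lambda$ and $u_n\to u$ in $L^s_{loc}(\R^3)$ for every $s\in[1,6)$. I would first verify that $u$ is a critical point of $I_\lambda$ by passing to the limit in $I_\lambda'(u_n)\varphi\to 0$ for $\varphi\in C_c^\infty(\R^3)$; the only non-routine passage is the Hartree term, for which the Hardy-Littlewood-Sobolev inequality together with local strong convergence in $L^{6p/(6-\mu)}$ (valid because $p<6-\mu$ forces $6p/(6-\mu)<6$) does the job. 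Since $I_\lambda'(u)=0$, one has $I_\lambda(u)=(\tfrac12-\tfrac{1}{2p})\|u\|_\lambda^2\geq 0$.

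Next, the splitting Lemma~\ref{al2} gives that $v_n:=u_n-u$ is a $(PS)_d$ sequence with
$$d:=c-I_\lambda(u)\in[0,c_1].$$
Lemma~\ref{al3} applied to $(v_n)$ produces a sharp dichotomy: either $d=0$, in which case Corollary~\ref{ac1} immediately yields $v_n\to 0$ in $E_\lambda$ and we are done; or $d\geq c_*$ for the universal constant $c_*>0$ independent of $\lambda$. The whole content of the proposition is to rule out this second alternative when $\lambda$ is sufficiently large; this is the only place where the depth of the potential well is used.

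Assuming $d\geq c_*$, Lemma~\ref{al4} provides
$$\liminf_{n\to\infty}|v_n|_{6p/(6-\mu)}^{2p}\geq \delta_0 d\geq \delta_0 c_*.$$
I would then fix $\varepsilon:=\tfrac{1}{2}\delta_0 c_*$ and invoke Lemma~\ref{al5} (which is applicable because $d\in[0,c_1]$) to obtain $\Lambda=\Lambda(\varepsilon)$ and $R=R(\varepsilon,c_1)$ such that, for every $\lambda\geq\Lambda$,
$$\limsup_{n\to\infty}|v_n|_{6p/(6-\mu),B_R^c(0)}^{2p}\leq\varepsilon.$$
Since $6p/(6-\mu)<6$, the compact embedding $E_\lambda\hookrightarrow L^{6p/(6-\mu)}(B_R(0))$ combined with $v_n\rightharpoonup 0$ forces $|v_n|_{6p/(6-\mu),B_R(0)}\to 0$. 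Using additivity of $\int|\cdot|^{6p/(6-\mu)}$ on $B_R(0)\cup B_R^c(0)$ and the monotonicity of the continuous map $t\mapsto t^{(6-\mu)/3}$ one concludes
$$\limsup_{n\to\infty}|v_n|_{6p/(6-\mu)}^{2p}\leq \varepsilon=\tfrac{1}{2}\delta_0 c_*,$$
contradicting the previous lower bound. Hence only $d=0$ is possible, and taking $\Lambda(c_1):=\Lambda(\tfrac{1}{2}\delta_0 c_*)$ finishes the argument.

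The point that requires the most care is the orchestration of the universal constants so that the threshold $\Lambda$ depends only on $c_1$ and not on the particular level $c$ or weak limit $u$. This rests on the observation $d=c-I_\lambda(u)\in[0,c_1]$ (using $I_\lambda(u)\geq 0$), which allows Lemma~\ref{al5} to be applied to $(v_n)$ with the same uniform ceiling $c_1$, and on the fact that $\delta_0$ and $c_*$ furnished by Lemmas~\ref{al3}--\ref{al4} are both independent of $\lambda$. Everything else---the Brezis--Lieb-type splitting, the passage to the limit in the nonlocal nonlinearity, and local compactness on $B_R$---is routine.
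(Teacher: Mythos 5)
Your argument is correct and follows essentially the same route as the paper: boundedness via Lemma \ref{al1}, the splitting Lemma \ref{al2} to reduce to the $(PS)_d$ sequence $v_n=u_n-u$ with $d=c-I_\lambda(u)\in[0,c_1]$, the dichotomy of Lemma \ref{al3}, and the contradiction between the lower bound of Lemma \ref{al4} and the uniform tail estimate of Lemma \ref{al5} combined with local compactness. The only cosmetic difference is that you phrase the final contradiction as an upper bound on the full $L^{6p/(6-\mu)}$ norm rather than a positive lower bound on the ball $B_R(0)$, which is equivalent.
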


\begin{proof}
	Let $(u_n)$ be a $(PS)_c$ sequence. Lemma \ref{al1} implies that $(u_n)$ is bounded. Passing to a subsequence if necessary,
	$$\left\{ \begin{array}{c}
	u_n\rightharpoonup u,\ \ \mbox{in}\ E_{\lambda};\\
	u_n(x)\rightarrow u(x) , \ \mbox{a.e. in}\ \mathbb{R}^3;\\
	u_n\rightarrow u,\ \mbox{in}\ L^{s}_{loc}(\mathbb{R}^3), 1 \leq s <6.
	\end{array}
	\right.$$
	Then, $I_{\lambda}'(u)=0$ and $I_{\lambda}(u)\geq 0$.
Setting $v_n=u_n-u,$  Lemma \ref{al2} ensures that $(v_n)$ is a $(PS)_{d}$ sequence with $d=c-I_{\lambda}(u)$.
	Furthermore,
	$$0\leq d=c-I_{\lambda}(u)\leq c \leq c_1$$
	We claim that $d=0$. Otherwise, suppose that $d>0$. By Lemma $\ref{al3}$ and Lemma $\ref{al4}$,  we know  $d \geq c_*$
	and
	\begin{equation}\label{3e16}
		\liminf_{n \rightarrow +\infty} \left| v_n\right|_{\frac{6p}{6-\mu}}^{2p} \geq
		\delta_0c_*>0.
	\end{equation}
	Applying Lemma \ref{al5} with $\varepsilon=\frac{\delta_0c_*}{2}>0,$
	there exist $\Lambda,R>0$ such that
	\begin{equation}\label{3e17}
		\limsup_{n \rightarrow +\infty} \left| v_n\right|_{\frac{6p}{6-\mu},B_R^C(0)}^{2p} \leq \frac{\delta_0c_*}{2},\ \mbox{for}\ \lambda \ge \Lambda.
	\end{equation}
	Combining $(\ref{3e16})$ and $(\ref{3e17}),$ we obtain $$\liminf_{n \rightarrow
		+\infty} \left| v_n\right|_{\frac{6p}{6-\mu},B_R(0)}^{2p} \geq
	\frac{\delta_0c_*}{2}>0,$$ which is absurd, because as $v_n
	\rightharpoonup 0$ in $E_{\lambda},$ the compact embedding
	$E_{\lambda}\hookrightarrow L^{\frac{6p}{6-\mu}}(B_R(0))$ implies that
	$$
	\liminf_{n
		\rightarrow +\infty} \left| v_n\right|_{\frac{6p}{6-\mu},B_r(0)}^{2p}=0.
	$$
	Thereby, $d=0$ and $(v_n)$ is a $(PS)_0$ sequence. Hence, by Corollary
	$\ref{ac1}$, $v_n \rightarrow 0$ in $E_\lambda$. Thus, $I_{\lambda}$ 	satisfies the $(PS)_c$ condition for $c \in [0,c_1]$ if $\lambda$ is  large enough.
\end{proof}

\section{The $ (PS)_\infty $ condition}


A sequence $ (u_n) \subset  H^{ 1} ( \mathbb R^3 ) $ is called a $ (PS)_\infty $ \emph{sequence for the family} $ \left( I_\lambda \right)_{\lambda \ge 1} $, if there exist $d \in [0,c_{\Gamma}]$ and a sequence $ ( \lambda_n ) \subset  [1, \infty) $ with $ \lambda_n \to \infty $,  such that
$$
   I_{ \lambda_n }(u_n) \to d \text{ and } \left\| I'_{ \lambda_n }(u_n) \right\|_{E^{*}_{\lambda_n}} \to 0, \text{ as } n \to \infty.
$$

\begin{proposition} \label{(PS) infty condition}
 Suppose that $0<\mu<3$,  $2 \leq  p<6-\mu$ and $ (u_n) \subset H^{ 1 } ( \mathbb R^3 ) $ is a $ (PS)_\infty $ sequence for $ \left( I_\lambda \right)_{\lambda \ge 1} $ with $0<d\leq c_{\Gamma}$. Then, up to subsequence, there exists $ u \in H^{ 1 } ( \mathbb R^3 ) $ such that $ u_n \rightharpoonup u $ in $ H^{ 1}( \mathbb R^3 ) $. Furthermore,
\begin{enumerate}
  \item[(i)] $ u_n \to u $ in $ H^{ 1} ( \mathbb R^3 ) $;
	\item[(ii)] $ u = 0 $ in $ \mathbb R^3 \setminus \Omega $ and $u \in H^{1}_0 (\Omega )$ is a solution for
   $$
		     - \Delta u + u  =  \displaystyle\Big(\int_{\Omega} \frac{ |u|^p}{|x-y|^{\mu}}dy\Big)|u|^{p-2}u\ \   \text{ in } \Omega;
	$$
	 \item[(iii)] $ \displaystyle \lambda_n\int_{\mathbb R^3}  a(x) | u_n |^{ 2 } \to 0 $;
	 \item[(iv)] $ \|u_n-u\|^{2}_{\lambda,\Omega}\to 0$;
   \item[(v)] $  \|u_n\|^{2}_{\lambda,\R^3 \setminus \Omega} \to 0 $;
	 \item[(vi)] $ I_{ \lambda_n } (u_n) \to \displaystyle \frac{1}{2}\int_{ \Omega } (| \nabla u |^{ 2 } +  | u |^{2} )dx- \frac{1}{2p}\int_{ \Omega} \Big(\int_{\Omega} \frac{ |u|^p}{|x-y|^{\mu}}dy\Big)|u|^{p}dx $.
\end{enumerate}
\end{proposition}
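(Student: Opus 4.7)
The plan is to carry out a standard $(PS)_\infty$ analysis in the spirit of Ding--Tanaka and Alves--N\'obrega, adapted to the nonlocal Choquard setting: (a) establish boundedness and extract a weak limit; (b) show the limit is supported in $\overline{\Omega}$; (c) pass to the limit in the Euler--Lagrange equation via tightness and the Hardy--Littlewood--Sobolev inequality; and (d) upgrade all weak convergences to strong ones, from which items (iii)--(vi) follow at once.

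For (a), arguing as in Lemma~\ref{al1} but with $\lambda=\lambda_n$, the identity
\[
I_{\lambda_n}(u_n)-\tfrac{1}{2p}I'_{\lambda_n}(u_n)u_n = \tfrac{p-1}{2p}\|u_n\|_{\lambda_n}^{2}
\]
together with the $(PS)_\infty$ hypothesis yields $\|u_n\|_{\lambda_n}\leq C$ uniformly in $n$. Since $\lambda_n\geq 1$, also $\|u_n\|_{H^1(\R^3)}\leq C$, so along a subsequence $u_n\rightharpoonup u$ in $H^1(\R^3)$, $u_n\to u$ a.e., and $u_n\to u$ in $L^{s}_{\mathrm{loc}}(\R^3)$ for $s\in[1,6)$. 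For (b), the estimate $\int_{\R^3} a\,u_n^{2}\,dx \leq \|u_n\|_{\lambda_n}^{2}/\lambda_n \to 0$ combined with Fatou's lemma gives $\int a\,u^{2}=0$, hence $u=0$ a.e.\ on $\{a>0\}$; the structural assumption $\Omega=\mathrm{int}(a^{-1}(0))$ together with (\ref{a3}) then forces $u$ to vanish a.e.\ outside $\overline{\Omega}$, so $u\in H^{1}_{0}(\Omega)$, which is the first part of (ii).

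Step (c), the convergence of the nonlocal term, is the critical one. I would adapt Lemma~\ref{al5} to the varying parameter $\lambda_n$: splitting $\{|x|>R\}$ into $A(R)=\{|x|>R,\,a\geq M_0\}$ and $B(R)=\{|x|>R,\,a<M_0\}$, the first contributes at most $\|u_n\|_{\lambda_n}^{2}/(\lambda_n M_0+1)\to 0$, while assumption~(\ref{a3}) with H\"older makes the second piece arbitrarily small for $R$ large. Combined with $L^{s}_{\mathrm{loc}}$-compactness and interpolation, this delivers strong convergence $u_n\to u$ in $L^{6p/(6-\mu)}(\R^3)$. Hardy--Littlewood--Sobolev then yields
\[
\int_{\R^3}\Big(\frac{1}{|x|^{\mu}}\ast|u_n|^{p}\Big)|u_n|^{p}\,dx \longrightarrow \int_{\R^3}\Big(\frac{1}{|x|^{\mu}}\ast|u|^{p}\Big)|u|^{p}\,dx,
\]
and testing $I'_{\lambda_n}(u_n)$ against arbitrary $\varphi\in C_c^{\infty}(\Omega)$ (where $a\equiv 0$) and passing to the limit completes (ii).

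For (d), since $u\in H^{1}_{0}(\Omega)$ solves the limit equation, $\int_\Omega(|\nabla u|^{2}+u^{2})\,dx=\int_\Omega(\frac{1}{|x|^{\mu}}\ast|u|^{p})|u|^{p}\,dx=\|u\|_{H^1(\R^3)}^{2}$, while $I'_{\lambda_n}(u_n)u_n=o(1)$ and the nonlocal convergence above force $\|u_n\|_{\lambda_n}^{2}\to \|u\|_{H^1(\R^3)}^{2}$. Decomposing
\[
\|u_n\|_{\lambda_n}^{2} = \int_{\R^3}(|\nabla u_n|^{2}+u_n^{2})\,dx + \lambda_n\int_{\R^3} a\,u_n^{2}\,dx
\]
and applying weak lower semicontinuity to the first nonnegative summand forces both summands to converge individually to the anticipated limits: $\lambda_n\int a\,u_n^{2}\to 0$ gives (iii), and $\int(|\nabla u_n|^{2}+u_n^{2})\to\|u\|_{H^1(\R^3)}^{2}$ together with weak convergence gives (i). Items (iv), (v), (vi) then follow immediately by splitting integrals over $\Omega$ and its complement and using $a\equiv 0$ on $\Omega$. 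I expect the \emph{tightness step} above to be the main obstacle: unlike the local case, convergence of the nonlocal term requires genuine global $L^{6p/(6-\mu)}(\R^3)$-control on the tails, so the interplay between $\lambda_n\to\infty$ and the finite-measure condition~(\ref{a3}) must be exploited in tandem rather than relying on purely local compactness.
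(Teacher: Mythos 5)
Your proposal is correct and its skeleton coincides with the paper's: uniform boundedness of $\|u_n\|_{\lambda_n}$ from the $(PS)_\infty$ identity, the bound $\lambda_n\int_{\mathbb R^3}a\,u_n^2\,dx\le\|u_n\|_{\lambda_n}^2$ plus Fatou to force the weak limit to vanish where $a>0$ (the paper phrases this through the sets $C_m=\{a\ge 1/m\}$, which is the same estimate), and testing $I'_{\lambda_n}(u_n)$ against $\varphi\in C_c^\infty(\Omega)$ to obtain the limit equation. Where you diverge is in extracting (i) and (iii). The paper computes $\|u_n-u\|_{\lambda_n}^2=\int_{\mathbb R^3}\big(\tfrac{1}{|x|^\mu}\ast|u_n|^p\big)|u_n|^{p-2}u_n(u_n-u)\,dx+o_n(1)$ directly from $I'_{\lambda_n}(u_n)(u_n-u)\to 0$, kills the nonlocal term ``as in Lemma~\ref{al2}'', and only then deduces (iii) from (i) via $\lambda_n\int a\,u_n^2=\lambda_n\int a\,|u_n-u|^2\le\|u_n-u\|_{\lambda_n}^2$. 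You instead show $\|u_n\|_{\lambda_n}^2\to\|u\|_{H^1}^2$ from $I'_{\lambda_n}(u_n)u_n=o_n(1)$ together with convergence of the Choquard term, and then let weak lower semicontinuity distribute the limit between the two nonnegative summands, which yields (i) and (iii) in one stroke. Both routes rest on the same critical fact --- convergence of the nonlocal term, which requires genuine $L^{6p/(6-\mu)}(\mathbb R^3)$ convergence of $u_n$ rather than local compactness alone --- and your write-up has the merit of making explicit that the tail estimate of Lemma~\ref{al5} must be rerun with the varying parameter $\lambda_n\to\infty$ (where it is in fact easier, since $(\lambda_n M_0+1)^{-1}\to 0$); the paper compresses this into the phrase ``as in the proof of Lemma~\ref{al2}''. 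Items (iv)--(vi) then follow by the same bookkeeping in both versions, so your argument is complete.
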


\begin{proof}
   By hypothesis,
   $$
   I_{\lambda_n}(u_n)\rightarrow d\ \mbox{and}\ \|I'_{\lambda_n}(u_n)\|_{E'_{\lambda_n}} \rightarrow 0.
   $$
   Then, the same arguments employed in the proof of Lemma \ref{al1}  imply that $( \| u_n \|_{ \lambda_n } ) $ and $ (u_n) $ are  bounded in $ \mathbb R $ and  $ H^{1}( \mathbb R^3 ) $ respectively. And so, up to subsequence, there exists $ u \in H^{1}(\mathbb R^3) $ such that
$$
   u_n \rightharpoonup u  \text{ in } H^{1} ( \mathbb R^3) \, \text{ and } \, u_n(x) \to u(x) \text{ for a.e. } x \in \mathbb R^3.
$$
Now, for each $ m \in \mathbb N $, we define $ C_m = \left\{ x \in \mathbb R^3 \, ; \, a(x) \ge \dfrac{1}{m} \right\} $. Without loss of generality, we may assume that $ \lambda_n < 2 ( \lambda_n-1 ), \, \forall n \in \mathbb N $. Thus
	       $$
		       \int_{ C_m } | u_n |^{ 2 }dx \le \frac{2m}{\lambda_n} \int_{ C_m } \big( \lambda_n a(x)+1) | u_n |^{ 2 }dx \leq \frac{C}{\lambda_n}.
	       $$
By Fatou's lemma, we derive that
	       $$
			     \int_{ C_m } | u |^{ 2 }dx = 0,
	       $$
	       which implies that $ u = 0 $ in $ C_m $, and so, $ u = 0 $ in $ \mathbb R^3 \setminus \overline{\Omega} $. From this, we are able to prove $(i)-(vi)$.
	
$(i)$ \, By a simple computation, we see that
$$
\|u_n-u\|^2_{ \lambda_n}=I'_{\lambda_n}(u_n)u_n-I'_{\lambda_n}(u_n)u+ \int_{\R^3}\big(\frac{1}{|x|^\mu}\ast |u_n|^p\big)|u_n|^{p-2}u_n(u_n-u)dx+o_n(1),
$$
then,
$$
\|u_n-u\|^2_{ \lambda_n}=\int_{\R^3}\big(\frac{1}{|x|^\mu}\ast |u_n|^p\big)|u_n|^{p-2}u_n(u_n-u)dx+o_n(1).
$$
As in the proof of Lemma \ref{al2},
$$
\|u_n-u \|_{\lambda_n}^{2}\rightarrow 0,
$$
which means that $u_n \to u$ in $H^1(\mathbb{R}^3).$

$(ii)$ \, Since $ u \in H^{ 1}(\mathbb R^3) $ and $ u = 0 $  in $ \mathbb R^3 \setminus \overline{\Omega} $, we know $ u \in H^{1}_0( \Omega ) $  and $ u{ |_{\Omega_j} } \in H^{1}_0( \Omega_j) $, for $ j \in \{1,2...,k\}  $. Moreover, taking into account that $u_n \to u$ in $H^{1}(\mathbb R^3)$ and $I'_{\lambda_n}(u_n)\varphi \to 0$  for $\varphi \in C^{\infty}_0 ( \Omega)$, we get
\begin{equation} \label{u is solution}	
	          \int_{\Omega}( \nabla u  \nabla \varphi +  u \varphi )dx - \int_{\Omega} \Big(\int_{\Omega} \frac{ |u|^p}{|x-y|^{\mu}}dy\Big)|u|^{p-2}u \varphi dx = 0,
         \end{equation}
which shows that $ u{ |_{\Omega} }  $ is a solution for the nonlocal problem
         $$
		     - \Delta u + u  =  \displaystyle\Big(\int_{\Omega} \frac{ |u|^p}{|x-y|^{\mu}}dy\Big)|u|^{p-2}u\ \   \text{ in } \Omega.
	$$	
			
$(iii)$ \,  In view of (i),
	       $$
		        \lambda_n \int_{ \mathbb R^3 }  a(x) | u_n |^{2 } dx= \int_{ \mathbb R^3 } \lambda_n a(x) | u_n-u |^{ 2 }dx \leq \|u_n-u\|^{2}_{\lambda_n}.
	       $$
Then
				$$
				\lambda_n \int_{ \mathbb R^3 }  a(x) | u_n |^{2 } dx \to 0.
				$$

$(iv)$ \, For each $ j \in \{1,2...,k\}  $,
	       $$
				    |u_n-u|^{2}_{2, \Omega_j }, |\nabla u_n - \nabla u|^{2}_{2, \Omega_j } \to 0. \quad ( \mbox{see} \quad (i)).
				 $$
				 Therefore,
				 $$
				 \int_{ \Omega } ( | \nabla u_n |^{ 2 } - | \nabla u |^{ 2 } )dx \to 0 \quad \mbox{and} \quad \int_{ \Omega} ( | u_n |^{ 2 } - | u |^{ 2 } )dx \to 0.
				 $$
In view of (iii), we know
				 $$
				    \int_{ \Omega} \lambda_n a(x) | u_n |^{2}  dx \to 0,
			   $$
then
				 $$
				  \|u_n\|^{2}_{ \lambda_n, \Omega } \to \int_{ \Omega } ( | \nabla u |^{ 2 } + | u |^{ 2} )dx.
				 $$

$(v)$ \, Summarizing (i) and $ \|u_n-u\|^{2}_{ \lambda_n } \to 0 $, we obtain
	              $$
								  \|u_n\|^{2}_{ \lambda_n, \mathbb R^3 \setminus \Omega} \to 0.
								$$

$(vi)$ \, We can write the functional  $I_{\lambda_n}$ in the following way
	      $$
\aligned
	          I_{ \lambda_n } (u_n) &=\frac{1}{2} \int_{ \Omega}(| \nabla u_n |^{2} + ( \lambda_n a(x) + 1) | u_n |^{2})dx+ \frac{1}{2} \int_{ \mathbb R^3\setminus \Omega } ( | \nabla u_n |^{ 2 } + ( \lambda_n a(x) +1 ) | u_n |^{ 2 } ) dx\\
&\hspace{1cm}-\frac{1}{2p}\int_{\R^3\backslash  \Omega}\big(\frac{1}{|x|^\mu}\ast |u_n|^p\big)|u_n|^pdx-\frac{1}{2p}\int_{ \Omega}\big(\frac{1}{|x|^\mu}\ast |u_n|^p)|u_n|^pdx.
\endaligned				
$$
				Using $(i)-(v)$, we get
				 $$
					  \frac{1}{2}\int_{ \Omega} ( | \nabla u_n |^{ 2 } + ( \lambda_n a(x) + 1 ) | u_n |^{2 } )dx \to \frac{1}{2}\int_{ \Omega}(| \nabla u|^{2} + | u |^{2})dx,
				 $$

				 $$
					 \frac{1}{2}\int_{ \mathbb R^3 \setminus \Omega} (| \nabla u_n |^{2} + ( \lambda_n a(x) + 1) | u_n |^{2} ) dx\to 0,
				$$
				$$
\int_{\Omega}\big(\frac{1}{|x|^\mu}\ast |u_n|^p\big)|u_n|^pdx\to \int_{\Omega } \Big(\int_{\Omega} \frac{ |u|^p}{|x-y|^{\mu}}dy\Big)|u|^{p}dx,
				$$
				$$
\int_{\R^3\backslash  \Omega}\big(\frac{1}{|x|^\mu}\ast |u_n|^p\big)|u_n|^pdx \to 0.
				$$
				 Therefore, we can conclude that
				$$ I_{ \lambda_n } (u_n) \to \displaystyle \frac{1}{2}\int_{ \Omega } (| \nabla u |^{ 2 } +  | u |^{2} )dx- \frac{1}{2p}\int_{ \Omega } \Big(\int_{\Omega} \frac{ |u|^p}{|x-y|^{\mu}}dy\Big)|u|^{p}dx.$$
\end{proof}

\section{ Further propositions for $c_{\Gamma}$}


In the sequel, without loss of generality, we consider $\Gamma=\{1,\cdots,l\},$ with $l \le k$.  Moreover, let us denote by $\Omega'_\Gamma=\cup_{j \in \Gamma}\Omega'_j$, where $\Omega'_j$ is an open neighborhood of $\Omega_j$ with $\Omega'_j \cap \Omega'_i=\emptyset$ if $j \not=i$. Using this notion, we introduce the functional
$$
   I_{ \lambda,\Gamma}(u) = \frac{1}{2}\int_{ \Omega_{\Gamma}' } (| \nabla u |^{2} +( \lambda a(x) + 1) | u |^{2} )dx-\frac{1}{2p} \int_{ \Omega_{\Gamma}' } \Big(\int_{\Omega_{\Gamma}'} \frac{ |u|^p}{|x-y|^{\mu}}dy\Big)|u|^{p}dx,
$$
which is the energy functional associated to the Choquard equation with Neumann boundary condition
$$
\left\{
\begin{array}{l}
- \Delta u + ( \lambda a(x) + 1) u =\displaystyle \Big(\int_{\Omega'_\Gamma} \frac{ |u|^p}{|x-y|^{\mu}}dy\Big)|u|^{p-2}u, \text{ in } \Omega'_\Gamma,\\
        \frac{\partial u}{\partial \eta} = 0, \text{ on } \partial \Omega'_\Gamma.
\end{array}
\right.
\eqno{(CN_\lambda)}
$$
In what follows, we denote by $c_{\Gamma}$ the number given by
$$
c_{\Gamma}=\inf_{u \in \mathcal{M}_{\Gamma}}I_{\Gamma}(u)
$$
where
$$
\mathcal {M}_{\Gamma}=\{u\in \mathcal N_{\Gamma}: I_{\Gamma}'(u)u_j=0 \mbox { and }
u_{j}\neq 0, \,\,\, \forall j \in \Gamma \}
$$
with $u_{j}=u{|_{ \Omega_j}}$ and
$$
\mathcal{N}_{\Gamma} = \{ u\in
H^1(\Omega_{\Gamma})\setminus\{0\}\, :\,I'_{\Gamma}(u)u=0\}.
$$
Similarly, we denote by $c_{\lambda,\Gamma}$ the number given by
$$
c_{\lambda,\Gamma}=\inf_{u \in \mathcal {M}_{\Gamma}'}I_{ \lambda,\Gamma}(u)
$$
where
$$
\mathcal {M}_{\Gamma}'=\{u\in \mathcal {N}_{\Gamma}': I_{ \lambda,\Gamma}'(u)u_j=0 \mbox { and }
u_{j}\neq 0, \,\,\, \forall j \in \Gamma \}
$$
with $u_{j}=u{|_{ \Omega_j'}}$ and
$$
\mathcal{N}_{\Gamma}' = \{ u\in
H^1(\Omega_{\Gamma}')\setminus\{0\}\, :\,I_{ \lambda,\Gamma}'(u)u=0\}.
$$
Repeating the same arguments in Section 2,  we know that there exist $ w_{\Gamma } \in H_0^{1}( \Omega_{\Gamma} ) $ and $ w_{\lambda,\Gamma } \in H^{1}( \Omega'_{\Gamma} ) $ such that
$$
I_{\Gamma}( w_{\Gamma }) = c_{ \Gamma}	\, \text{ and } \, I'_{\Gamma}( w_{ \Gamma}) = 0
$$
and
$$
I_{ \lambda,\Gamma}( w_{ \lambda,\Gamma }) = c_{ \lambda,\Gamma}	\, \text{ and } \, I'_{ \lambda,\Gamma}( w_{ \lambda,\Gamma}) = 0.
$$

 We have the following proposition, which describes an important relation between $c_{\Gamma}$ and $c_{\lambda,\Gamma}$.
\begin{lemma} \label{CONVER}
   There holds that
\begin{enumerate}
   \item[(i)] $ 0 < c_{\lambda,\Gamma} \le c_{\Gamma} , \, \forall \lambda \geq 0 $;
   \item[(ii)] $ c_{\lambda,\Gamma} \to c_{\Gamma} , \text{ as } \lambda \to \infty$.
\end{enumerate}
\end{lemma}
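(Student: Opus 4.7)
My plan is to handle the two claims separately. For \emph{(i)}, the inequality $c_{\lambda,\Gamma}\le c_\Gamma$ I would obtain by ``extension by zero'': given any $u\in\mathcal{M}_\Gamma\subset H^1_0(\Omega_\Gamma)$, let $\tilde u\in H^1(\Omega_\Gamma')$ denote the trivial extension. Because $a\equiv 0$ on $\Omega_\Gamma$ and $\tilde u$ vanishes off $\Omega_\Gamma$, the Neumann and Dirichlet energies agree, $I_{\lambda,\Gamma}(\tilde u)=I_\Gamma(u)$, and the componentwise Nehari identities for $u$ survive the extension, so $\tilde u\in\mathcal{M}_\Gamma'$; taking the infimum gives the bound. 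The positivity $c_{\lambda,\Gamma}>0$ I would establish by repeating the end of Lemma \ref{lema3}: the Nehari identity together with the Hardy--Littlewood--Sobolev inequality and the Sobolev embedding yields $\|u\|_{\lambda,\Omega_\Gamma'}^2\le C\|u\|_{\lambda,\Omega_\Gamma'}^{2p}$ with $C$ independent of $\lambda$, hence a uniform positive lower bound on $\|u\|_{\lambda,\Omega_\Gamma'}$ over $\mathcal{M}_\Gamma'$, which in turn bounds $I_{\lambda,\Gamma}(u)=\bigl(\tfrac{1}{2}-\tfrac{1}{2p}\bigr)\|u\|_{\lambda,\Omega_\Gamma'}^2$ away from zero.

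For \emph{(ii)}, given \emph{(i)} it suffices to show $\liminf_{n\to\infty}c_{\lambda_n,\Gamma}\ge c_\Gamma$ for any sequence $\lambda_n\to\infty$. I would take the minimizers $w_n=w_{\lambda_n,\Gamma}$, and first observe that the Nehari-type identity combined with \emph{(i)} gives $\|w_n\|_{\lambda_n,\Omega_\Gamma'}^2\le \frac{2p}{p-1}c_\Gamma$; in particular $(w_n)$ is bounded in $H^1(\Omega_\Gamma')$ and $\lambda_n\int_{\Omega_\Gamma'} a|w_n|^2\,dx$ is bounded. Extracting a subsequence with $w_n\rightharpoonup w$ in $H^1(\Omega_\Gamma')$ and $w_n\to w$ in $L^q_{\mathrm{loc}}$ for $q\in[1,6)$, Fatou's lemma forces $\int_{\Omega_\Gamma'} a\,w^2\,dx=0$, so $w$ vanishes on $\Omega_\Gamma'\setminus\Omega_\Gamma$ (choosing $\Omega_\Gamma'$ so that $a>0$ there a.e.); hence $w\in H^1_0(\Omega_\Gamma)$, and testing $I'_{\lambda_n,\Gamma}(w_n)=0$ against $\varphi\in C_c^\infty(\Omega_\Gamma)$ and passing to the limit using HLS shows $w$ is a weak solution of the limit problem on $\Omega_\Gamma$.

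The main obstacle I foresee is verifying $w\in\mathcal{M}_\Gamma$, i.e.\ that $w|_{\Omega_j}\not\equiv 0$ for every $j\in\Gamma$ (a priori mass could be lost on some component in the limit). To rule this out, I would apply HLS and Sobolev to the componentwise Nehari identity
$$
\|(w_n)|_{\Omega_j'}\|_{\lambda_n,\Omega_j'}^2=\int_{\Omega_j'}\Bigl(\int_{\Omega_\Gamma'}\frac{|w_n|^p}{|x-y|^\mu}\,dy\Bigr)|w_n|^p\,dx\le C\,\|w_n\|_{\lambda_n,\Omega_j'}^p\,\|w_n\|_{\lambda_n,\Omega_\Gamma'}^p.
$$
Since $\|w_n\|_{\lambda_n,\Omega_\Gamma'}$ is bounded and $(w_n)|_{\Omega_j'}\not\equiv 0$, dividing yields a uniform positive bound $\|(w_n)|_{\Omega_j'}\|_{\lambda_n,\Omega_j'}\ge\tau_j>0$. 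Strong $L^{6p/(6-\mu)}_{\mathrm{loc}}$ convergence then lets me pass to the limit in the right-hand side of the Nehari identity, transferring the lower bound onto the nonlocal quantity $\int_{\Omega_j}\bigl(\int_{\Omega_\Gamma}|w|^p/|x-y|^\mu\,dy\bigr)|w|^p\,dx\ge \tau_j^2>0$, which forces $w|_{\Omega_j}\neq 0$.

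Once $w\in\mathcal{M}_\Gamma$ is secured, the argument closes by standard weak lower semicontinuity: the Dirichlet form is l.s.c.\ under weak $H^1$ convergence, the term $\lambda_n a|w_n|^2$ is nonnegative, and the nonlocal term passes to the limit by HLS plus $L^{6p/(6-\mu)}_{\mathrm{loc}}$ convergence, so
$$
c_\Gamma\le I_\Gamma(w)\le \liminf_{n\to\infty} I_{\lambda_n,\Gamma}(w_n)=\liminf_{n\to\infty} c_{\lambda_n,\Gamma}.
$$
Combined with $c_{\lambda_n,\Gamma}\le c_\Gamma$ from \emph{(i)}, this gives $c_{\lambda_n,\Gamma}\to c_\Gamma$, completing the plan.
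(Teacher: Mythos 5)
Your plan is correct and follows the same two-step skeleton as the paper: part (i) comes from the inclusion $H^{1}_0(\Omega_\Gamma)\subset H^{1}(\Omega'_\Gamma)$ (your extension-by-zero computation simply makes explicit why the componentwise Nehari constraints and the energy are preserved, and your positivity argument is the one from Lemma \ref{lema3}), and part (ii) is obtained by sending $\lambda_n\to\infty$ along the minimizers $w_{\lambda_n,\Gamma}$ and showing that the limit lies in $\mathcal{M}_\Gamma$, so that $\liminf_n c_{\lambda_n,\Gamma}\ge c_\Gamma$. The only real difference is how the limit is controlled: the paper invokes the $(PS)_\infty$ analysis of Proposition \ref{(PS) infty condition} to get strong $H^{1}(\Omega'_\Gamma)$ convergence $w_{n_i}\to w$, hence $c_{\lambda_{n_i},\Gamma}\to I_\Gamma(w)$ and $I'_\Gamma(w)=0$ exactly, whereas you work with weak convergence only, recovering the energy inequality from weak lower semicontinuity of the quadratic part plus compactness of the nonlocal term, and you reprove the nonvanishing $w|_{\Omega_j}\neq0$ by hand via the componentwise Nehari identity and Hardy--Littlewood--Sobolev (the same device as Lemma \ref{lema2x}). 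Your route is a bit more self-contained and needs only the one-sided bound, which suffices; just note that your uniform componentwise lower bound $\|w_n\|_{\lambda_n,\Omega'_j}\ge\tau_j$ uses $p>2$ in the division step (consistent with the standing assumption of Section 5), and that the implication ``$\int_{\Omega'_\Gamma}a\,w^{2}\,dx=0$ forces $w=0$ off $\Omega_\Gamma$'' rests on the same implicit hypothesis on $a^{-1}(0)$ that the paper itself uses in Proposition \ref{(PS) infty condition}.
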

\begin{proof}
$(i)$ \,  Since $ H^{1}_0(\Omega_{\Gamma}) \subset H^{1}(\Omega'_{\Gamma} )$, it is easy to see that
				 $$
				0 <   c_{\lambda,\Gamma} \leq  c_{\Gamma}.
				 $$

$(ii)$ \,  Let $\lambda_n\to \infty$. From the above commentaries, for each $\lambda_n$ there exists $ w_n \in H^{1}( \Omega') $ with
		     $$
					  I_{ \lambda_n,\Gamma  }(w_n) = c_{\lambda_n,\Gamma}	\, \text{ and } \, I'_{ \lambda_n,\Gamma  } ( w_n) = 0.
				 $$
As $ \big(c_{\lambda_n,\Gamma} \big) $ is bounded, there exists $( w_{ n_i }) $, subsequence of $( w_n) $, such that $ (I_{ \lambda_{ n_i },\Gamma }( w_{ n_i }) )$ converges and  $I'_{ \lambda_{ n_i }, \Gamma} ( w_{ n_i }) = 0 $. Repeating  the same ideas  explored in the proof of  Proposition \ref{(PS) infty condition}, we know that  there exists $ w \in H^{1}_0(\Omega_{\Gamma} ) \setminus \{0\} \subset H^{1}(\Omega_{\Gamma}' ) $ such that
$$
w_j =w|_{\Omega_j} \not=0,  \,\,\, j \in \Gamma
$$	
and 				
				
				$$
				    w_{ n_i } \to w \text{ in } H^{1}( \Omega_{\Gamma}' ), \text{ as }\ n_i \to \infty.
				 $$
				 Furthermore, we also have that
				 $$
				    c_{ \lambda_{ n_i },\Gamma } = I_{ \lambda_{ n_i },\Gamma }( w_{ n_i }) \to I_{\Gamma}(w)
				 $$
				and
				 $$
				    0 = I'_{ \lambda_{ n_i },\Omega' }( w_{ n_i }) \to I_{\Gamma}'(w).
				 $$
By the definition of $c_{\Gamma}$,
				 $$
				    \lim_i c_{ \lambda_{ n_i },\Gamma } \geq c_{\Gamma}.
				 $$
Then, combining the last limit with conclusion (i), we can guarantee that
				 $$
				    c_{ \lambda_{ n_i },\Gamma } \to c_{\Gamma} , \text{ as } n_i \to \infty.
				 $$
				 This establishes the asserted result.
\end{proof}

In the sequel, we denote by $w \in H^{1}_0(\Omega_\Gamma)$ the least energy solution obtained in Section 2, that is,
\begin{equation}\label{GSS}
w \in \mathcal{M}_\Gamma, \quad I_{\Gamma}(w)=c_{\Gamma} \quad \mbox{and} \quad I_{\Gamma}'(w)=0.
\end{equation}
Changing variables by $t_j=s^{\frac{1}{p}}_j$,  it is obvious that
$$
\aligned
I_{\Gamma}\big(t_1w_1+\cdots+t_lw_l\big)&=\sum_{j=1}^{l}\frac{t^2_j}{2}||w_j||^2_j- \frac{1}{2p}\int_{\Omega_{\Gamma}}\left(\displaystyle \int_{\Omega_{\Gamma}}\frac{\big|\sum_{j=1}^{l} t_jw_j\big|^p}{|x-y|^{\mu}}dy\right)\Big| \sum_{j=1}^{l} t_jw_j\Big|^pdx\\
&=\sum_{j=1}^{l}\frac{s^{\frac{2}{p}}_j}{2}||w_j||^2_j- \frac{1}{2p}\int_{\Omega_{\Gamma}}\left( \int_{\Omega_{\Gamma}}\frac{\sum_{j=1}^{l} s_j|w_j|^p}{|x-y|^{\mu}}dy\right)\Big(\sum_{j=1}^{l} s_j|w_j|^p\Big)dx.
\endaligned
$$
Arguing as in \cite{GS},
$$
\int_{\Omega_{\Gamma}}\left(\displaystyle \int_{\Omega_{\Gamma}}\frac{ \sum_{j=1}^{l} s_j|w_j|^p}{|x-y|^{\mu}}dy\right)\Big(\sum_{j=1}^{l} s_j|w_j|^p\Big)dx=\int_{\Omega_{\Gamma}}\left[ \frac{1}{|x|^{\mu/2}}*\Big( \sum_{j=1}^{l} s_j|w_j|^p\Big)\right]^2 dx.
$$
As $s\mapsto s^{2/p}$ is concave and $s\mapsto s^{2}$ is strictly convex, we concluded that the function
$$
G(s_1,s_2,\cdots,s_l)=I_{\Gamma}(s_1^{\frac{1}{p}}w_1+\cdots+s_l^{\frac{1}{p}}w_l)
$$
is strictly concave with $\nabla G(1,\cdots,1)=0$. Hence, $(1,\cdots,1)$ is the unique global maximum point of $G$ on $[0,+\infty)^l$ with $G(1,\cdots,1)=c_{\Gamma}$.
In the sequel, we denote by $w \in H^{1}_0(\Omega_\Gamma)$ the least energy solution obtained in Section 2, that is,
$$
w \in \mathcal{M}_\Gamma, \quad I_{\Gamma}(w)=c_{\Gamma} \quad \mbox{and} \quad I_{\Gamma}'(w)=0.
$$
Assuming $p>2$, there are $r>0$ small enough and $R>0$ large enough such that
\begin{equation} \label{ML1}
	I_{\Gamma}'(\sum_{j=1, j\neq i}^{l}t_j w_j(x)+Rw_i)(Rw_i)<0, \,\,\,\hbox{ for } i \in \Gamma, \forall t_j\in[r,R]\ \  \hbox{and}\ \  j\neq i,
\end{equation}
\begin{equation} \label{ML2}
	I_{\Gamma}'(\sum_{j=1, j\neq i}^{l}t_j w_j(x)+rw_i)(rw_i)>0,\,\,\,\hbox{ for } i \in \Gamma, \forall t_j\in[r,R]\ \  \hbox{and}\ \  j\neq i.
\end{equation}
and
\begin{equation} \label{ML3}
I_\Gamma \big( \sum_{j=1}^{l}t_j w_j(x)\big) < c_{\Gamma}, \ \  \forall (t_1,\cdots, t_l ) \in \partial [r,R]^l, 	
\end{equation}
where $w_j:=w|_{{\Omega}_j }$, $ j \in \Gamma$.
Using these  information, we can define
$$
   \gamma_0 (t_1, \cdots, t_l)(x) = \sum_{j=1}^{l}t_j w_j(x) \in H^{1}_0(\Omega_{\Gamma}), \, \forall ( t_1,\cdots, t_l )\in [r,R]^l.
$$
and denote by $\Gamma_*$ the class of continuous pathes $\gamma \in C\big( [r,R]^l, E_\lambda \setminus \{ 0 \} \big)$ which satisfies the following conditions:
   $$
   \gamma=\gamma_0\ \mbox{on}\ \partial[r,R]^l, \leqno{(a)}
   $$
  and
   $$
   \Phi_{\Gamma}(\gamma)=\frac{1}{2}\int_{\mathbb{R}^3\setminus \Omega_{\Gamma}'}\big(|\nabla \gamma|^2+(\lambda a(x)+1)|\gamma|^2\big)dx-\frac{1}{p}\int_{\mathbb{R}^3\setminus \Omega_{\Gamma}'}\bigg(\frac{1}{|x|^{\mu}}\ast |\gamma|^p\bigg)|\gamma|^pdx\ge 0, \leqno{(b)}
   $$
   where $R>1>r>0$ are the positive constants obtained in \eqref{ML1} and  \eqref{ML2}. Since $\gamma_0 \in \Gamma^*,$ we know  that $\Gamma_* \neq \emptyset$. And by (a) for the path $\gamma$ and \eqref{ML3}, we have
\begin{equation} \label{blambdagamma}
I_\lambda \big( \gamma(t_1,\cdots, t_l) \big) < c_{\Gamma}, \ \  \forall (t_1,\cdots, t_l ) \in \partial [r,R]^l, \forall \gamma \in \Gamma_\ast.
\end{equation}

The following lemma will be used to describe the intersection property of the paths and the set $\mathcal {M}_{\Gamma}$ in the final section.
\begin{lemma} \label{intersection}
   For all $ \gamma \in \Gamma_\ast $, there exists $ (t_1, \ldots, t_l ) \in (r,R)^l $ such that
$$
 I'_{\lambda, \Gamma} ({\gamma} (t_1, \ldots, t_l ) ){\gamma}_j(t_1, \ldots, t_l ) =0,
$$
where ${\gamma}_j(t_1, \ldots, t_l )={\gamma}(t_1, \ldots, t_l ){|_{ \Omega_j'}}$, $ j \in \Gamma$.
\end{lemma}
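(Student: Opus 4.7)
The plan is to apply Brouwer degree theory to the continuous vector field $\Psi:[r,R]^l\to\mathbb{R}^l$ with components
$$
\Psi_j(t)=I'_{\lambda,\Gamma}(\gamma(t))\,\gamma_j(t),\qquad j\in\Gamma,
$$
whose interior zeros are exactly the points sought. Continuity of $\Psi$ follows from that of $\gamma:[r,R]^l\to E_\lambda\setminus\{0\}$, the continuity of $I'_{\lambda,\Gamma}$ on $H^1(\Omega'_{\Gamma})$, and the continuity of the restrictions $u\mapsto u|_{\Omega'_j}$.

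The first step is to evaluate $\Psi$ on the boundary $\partial[r,R]^l$. By condition $(a)$, $\gamma(t)=\gamma_0(t)=\sum_{j=1}^{l}t_jw_j$ there, and since the neighborhoods $\Omega'_j$ are pairwise disjoint with $\Omega_j\subset\Omega'_j$, we have $\gamma_j(t)=t_jw_j$. Because $\gamma_0(t)$ is supported in $\Omega_{\Gamma}$, where $a\equiv 0$, a direct computation shows $I'_{\lambda,\Gamma}(\gamma_0(t))(t_jw_j)=I'_{\Gamma}(\gamma_0(t))(t_jw_j)$. Combining this identity with \eqref{ML1} and \eqref{ML2} yields the crucial sign information
$$
\Psi_k(t)<0 \ \text{on } \{t_k=R\},\qquad \Psi_k(t)>0 \ \text{on } \{t_k=r\},\qquad k\in\Gamma,
$$
so in particular $\Psi$ has no zeros on $\partial[r,R]^l$.

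Next, I would deform $\Psi$ to the affine field $\mathbf{1}-\mathrm{id}$, where $\mathbf{1}=(1,\dots,1)\in(r,R)^l$, via the linear homotopy
$$
H(t,s)=s\,\Psi(t)+(1-s)(\mathbf{1}-t),\qquad s\in[0,1].
$$
The key point is a sign match on each face: on $\{t_k=r\}$, both $1-r>0$ and $\Psi_k(t)>0$; on $\{t_k=R\}$, both are strictly negative. Hence $H(t,s)_k\neq 0$ on the respective face for every $s\in[0,1]$, and the Brouwer degree $\deg(H(\cdot,s),(r,R)^l,0)$ is well defined and constant in $s$. Since $\mathbf{1}-\mathrm{id}$ has the unique zero $\mathbf{1}$ in $(r,R)^l$ with Jacobian $-I$, I obtain
$$
\deg(\Psi,(r,R)^l,0)=\deg(\mathbf{1}-\mathrm{id},(r,R)^l,0)=(-1)^l\neq 0,
$$
producing the desired interior zero of $\Psi$.

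The principal obstacle is keeping the linear homotopy admissible on the whole boundary; this is precisely what the careful choice of $r<1<R$ in \eqref{ML1}--\eqref{ML2} secures, so that $(1-t_k)$ and $\Psi_k(t)$ carry matching signs on each face. Condition $(b)$ on the path $\gamma$ plays no role at this stage, since $I_{\lambda,\Gamma}$ depends only on the restriction of $\gamma(t)$ to $\Omega'_{\Gamma}$; that condition will be needed only later, when the path is tested against the full functional $I_\lambda$.
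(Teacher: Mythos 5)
Your proof is correct and follows essentially the same route as the paper: the paper simply invokes Miranda's theorem after noting that $\gamma=\gamma_0$ on $\partial[r,R]^l$ and that \eqref{ML1}--\eqref{ML2} give the required sign conditions on opposite faces, which is exactly the boundary information you establish. Your degree-theoretic homotopy to $\mathbf{1}-\mathrm{id}$ is just the standard proof of Miranda's theorem written out explicitly, so the two arguments coincide in substance.
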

\begin{proof}
Since $p>2$ and $\gamma = \gamma_0$ on $\partial [r,R]^{l}$, by using of \eqref{ML1} and  \eqref{ML2}, we see that the result follows by  Miranda's Theorem \cite{Miranda}.
\end{proof}

\section{Proof of Theorem \ref{main2}}


In this section, we are ready to find nonnegative solutions $ u_\lambda $ for large values of $ \lambda $, which converges to a least energy solution of $(C)_{\infty,\Gamma}$ as $ \lambda \to \infty $. To this end, we will prove two propositions which, together with  Propositions \ref{(PS) infty condition}, will help us to show the main result in Theorem \ref{main2}.

Henceforth, we denote by
$$
\Theta=\left\{u \in E_\lambda\,:\, \|u\|_{\lambda, \Omega'_j}> \frac{r\tau}{2} \,\,\,  j =1,\cdots, l \right\},
$$
where $r$ was fixed in (\ref{ML1}) and $\tau$ is the positive constant such that
$$
\|u_j\|_j > \tau, \quad \forall u \in \Upsilon_\Gamma=\{u \in \mathcal{M}_\Gamma \,:\, I_\Gamma(u)=c_\Gamma \} \quad \mbox{and} \quad \forall j \in \Gamma.
$$
Furthermore, $I_\lambda^{ c_{\Gamma} }$ denotes the set
$$
	I_\lambda^{ c_{\Gamma} } = \big\{ u \in E_\lambda \, ; \, I_\lambda(u) \le c_{\Gamma} \big\}.
$$
Fixing $\delta=\frac{r\tau}{8}$, for $ \xi >0 $ small enough, we set
\begin{equation} \label{A}
   {\cal A}_\xi^\lambda = \left\{ u \in \Theta_{2\delta} \,:\,\Phi_{\Gamma}(u)\ge 0,\, \left|\left|u\right|\right|_{\lambda,\mathbb{R}^3\setminus \Omega'_{\Gamma}}\leq \xi\, \mbox{and}\, | I_{ \lambda}(u)-c_{\Gamma} | \leq \xi\right\}.
\end{equation}
We observe that
$$
   w \in {\cal A}_\xi^\lambda \cap I_\lambda^{ c_{\Gamma} },
$$
showing that $ {\cal A}_\xi^\lambda \cap I_\lambda^{ c_{\Gamma} } \ne \emptyset $.
We have the following uniform estimate of $ \big\| I'_{ \lambda }(u) \big\|_{E^{*}_{\lambda}} $ on the region $ \left( {\cal A}_{ 2 \xi }^\lambda \setminus {\cal A}_\xi^\lambda \right) \cap I_\lambda^{ c_{\Gamma} } $.

\begin{proposition} \label{derivative estimate}
For each  $\xi> 0 $, there exist $ \Lambda_\ast \ge 1 $ and $ \sigma_0 >0   $ independent of $ \lambda $ such that
\begin{equation}
   \big\| I'_{ \lambda }(u) \big\|_{E^{*}_{\lambda}} \ge \sigma_0, \text{ for } \lambda \ge \Lambda_\ast \text{ and } u \in \left( {\cal A}_{ 2\xi }^\lambda \setminus {\cal A}_\xi^\lambda \right) \cap I_\lambda^{ c_{\Gamma} }.
\end{equation}
\end{proposition}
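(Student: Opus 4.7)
I would argue by contradiction. Suppose the conclusion fails: then there exist sequences $\lambda_n\to\infty$ and $u_n\in(\mathcal{A}^{\lambda_n}_{2\xi}\setminus\mathcal{A}^{\lambda_n}_\xi)\cap I_{\lambda_n}^{c_\Gamma}$ with $\|I'_{\lambda_n}(u_n)\|_{E^*_{\lambda_n}}\to 0$. Since membership in $\mathcal{A}^{\lambda_n}_{2\xi}$ forces $|I_{\lambda_n}(u_n)-c_\Gamma|\le 2\xi$, after passing to a subsequence $I_{\lambda_n}(u_n)\to d\in[c_\Gamma-2\xi,c_\Gamma]$, so $(u_n)$ is a $(PS)_\infty$ sequence at a level $d\le c_\Gamma$. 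Proposition \ref{(PS) infty condition} then produces $u\in H^1_0(\Omega)$ such that $u_n\to u$ strongly in $H^1(\mathbb{R}^3)$, $u\equiv 0$ outside $\Omega$, $u$ solves the nonlocal Dirichlet problem on $\Omega$, $\|u_n-u\|^2_{\lambda_n,\Omega}\to 0$, $\|u_n\|^2_{\lambda_n,\mathbb{R}^3\setminus\Omega}\to 0$, and $I_{\lambda_n}(u_n)$ converges to the Dirichlet energy of $u$ on $\Omega$.

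The plan is to show that $u\in\mathcal{M}_\Gamma$ with $I_\Gamma(u)=c_\Gamma$; the strong convergence will then place $u_n$ inside $\mathcal{A}^{\lambda_n}_\xi$ for $n$ large, contradicting the hypothesis $u_n\notin\mathcal{A}^{\lambda_n}_\xi$. This splits into two verifications: (A) $u|_{\Omega_j}\ne 0$ for every $j\in\Gamma$, and (B) $u|_{\Omega_i}\equiv 0$ for every $i\in\{1,\dots,k\}\setminus\Gamma$. For (A), the condition $u_n\in\Theta_{2\delta}$ with $\delta=r\tau/8$ combined with the triangle inequality yields $\|u_n\|_{\lambda_n,\Omega'_j}>r\tau/4$ for each $j\in\Gamma$; passing to the $H^1$-limit and using that $u$ vanishes outside $\Omega$ (so $u|_{\Omega'_j}=u|_{\Omega_j}$, the neighborhoods $\Omega'_j$ being chosen disjoint from the remaining components) gives $\|u|_{\Omega_j}\|_{H^1(\Omega_j)}\ge r\tau/4$.

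Step (B) is the main obstacle. From $\|u_n\|_{\lambda_n,\mathbb{R}^3\setminus\Omega'_\Gamma}\le 2\xi$ one obtains $\|u\|_{H^1(\Omega_i)}\le 2\xi$ for $i\notin\Gamma$. If $u|_{\Omega_i}$ were nontrivial, testing the equation $-\Delta u+u=(\int_\Omega |u|^p/|x-y|^\mu\,dy)|u|^{p-2}u$ against $u|_{\Omega_i}$ and applying the Hardy-Littlewood-Sobolev inequality together with the uniform bound $\|u\|_{H^1(\Omega)}\le C(c_\Gamma,p)$ (inherited from the boundedness of the $(PS)_\infty$ sequence) would produce, thanks to $p>2$, a universal lower bound $\|u\|_{H^1(\Omega_i)}\ge c_0>0$. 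Choosing $\xi$ small enough at the outset so that $2\xi<c_0$ forces $u|_{\Omega_i}\equiv 0$; this is precisely one of the points where the restriction $p>2$ is essential, as noted in Remark \ref{Re2}.

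With (A) and (B) in hand, $u\in H^1_0(\Omega_\Gamma)$ solves $(C)_{\infty,\Gamma}$ and is nonzero on each component, so $u\in\mathcal{M}_\Gamma$ and $I_\Gamma(u)\ge c_\Gamma$. Since $u$ is supported in $\Omega_\Gamma$, item $(vi)$ of Proposition \ref{(PS) infty condition} gives $I_{\lambda_n}(u_n)\to I_\Gamma(u)$, which combined with $I_{\lambda_n}(u_n)\le c_\Gamma$ forces $I_\Gamma(u)=c_\Gamma$. Finally, items $(iv)$-$(v)$ of Proposition \ref{(PS) infty condition} together with $u\equiv 0$ on $\Omega\setminus\Omega'_\Gamma$ yield $\|u_n\|_{\lambda_n,\mathbb{R}^3\setminus\Omega'_\Gamma}\to 0$ and $|I_{\lambda_n}(u_n)-c_\Gamma|\to 0$, while $u_n\in\Theta_{2\delta}$ and $\Phi_\Gamma(u_n)\ge 0$ are directly inherited from $u_n\in\mathcal{A}^{\lambda_n}_{2\xi}$. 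Hence for $n$ large all four defining conditions of $\mathcal{A}^{\lambda_n}_\xi$ hold, the desired contradiction.
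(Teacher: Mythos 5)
Your proof follows the same route as the paper's own argument: assume the estimate fails along $\lambda_n\to\infty$, feed the resulting sequence into Proposition \ref{(PS) infty condition}, use membership in $\Theta_{2\delta}$ to show the limit $u$ is nontrivial on each $\Omega_j$ with $j\in\Gamma$, conclude $I_{\Gamma}(u)=c_{\Gamma}$, and derive that $u_n\in\mathcal{A}_{\xi}^{\lambda_n}$ for large $n$, a contradiction. The one genuine difference is your step (B). The paper simply asserts that the limit produced by Proposition \ref{(PS) infty condition} lies in $H^{1}_0(\Omega_{\Gamma})$, whereas that proposition only yields $u=0$ outside the full well $\Omega=\cup_{j=1}^{k}\Omega_j$; the possibility that $u$ survives on some component $\Omega_i$ with $i\notin\Gamma$ therefore has to be excluded, and your Hardy--Littlewood--Sobolev lower bound for nontrivial components of solutions (valid precisely because $p>2$, consistent with Remark \ref{Re2}) does this cleanly, filling a step the paper leaves implicit. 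Note that this argument is also what legitimizes the conclusions $\|u_n\|_{\lambda_n,\mathbb{R}^3\setminus\Omega'_{\Gamma}}\to 0$ and $I_{\lambda_n}(u_n)\to I_{\Gamma}(u)$, which otherwise would only be known relative to the full $\Omega$. The only cost is that your step (B) requires $2\xi$ to be below a fixed universal threshold, so you prove the proposition for $\xi$ small rather than literally ``for each $\xi>0$''; since Proposition \ref{P} only invokes the estimate for $\xi\in(0,\xi^{*})$, this restriction is harmless, but it should be stated explicitly.
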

\begin{proof}
   We assume that there exist $ \lambda_n \to \infty $ and $ u_n \in \left( {\cal A}_{ 2 \xi }^{\lambda_n} \setminus {\cal A}_\xi^{\lambda_n} \right) \cap I_{\lambda_n}^{ c_{\Gamma}  } $ such that
$$
   \big\| I'_{ \lambda_n }(u_n) \big\|_{E^{*}_{\lambda_n}} \to 0.
$$
Since $ u_n \in {\cal A}_{ 2 \xi }^{ \lambda_n } $, we know $( \|u_n\|_{ \lambda_n }) $ and $ \big( I_{ \lambda_n }(u_n) \big) $ are both bounded. Passing to a subsequence if necessary, we may assume that $(I_{ \lambda_n }(u_n)) $ converges. Thus, from Proposition \ref{(PS) infty condition}, there exists $ u \in H^{1}_0( \Omega_{\Gamma}) $ such that $u$ is a solution for
$$
		     - \Delta u + u  =  \displaystyle\Big(\int_{\Omega_{\Gamma}} \frac{ |u|^p}{|x-y|^{\mu}}dy\Big)|u|^{p-2}u\ \   \text{ in } \Omega_{\Gamma}
	$$
with
$$
u_n \to u \,\, \text{in} \,\, H^{1}(\mathbb{R}^{3}), \,\,	 \|u_n\|_{ \lambda_n, \mathbb R^3 \setminus \Omega } \to 0 \, \text{ and } \,  I_{ \lambda_n} (u_n) \to I_{\Gamma}(u).
$$
As $(u_n) \subset \Theta_{2 \delta}$, we derive that
$$
\|u_n\|_{\lambda_n,\Omega'_j} > \frac{r\tau}{4}, \,\,\, j=1,\cdots, l.
$$
Letting  $n \to +\infty$, we get the inequality
$$
\|u\|_j \geq \frac{r\tau}{4}>0, \,\,\, j=1,\cdots, l,
$$
which yields $ u_{ |_{ \Omega_j } } \ne 0 $, $ j=1,\cdots, l$ and $I_{\Gamma}'(u)=0 $.
Consequently, $I_{\Gamma} (u) \geq c_{\Gamma}$. However, from the fact that $ I_{ \lambda_n } (u_n) \leq c_{\Gamma} $ and $I_{ \lambda_n } (u_n) \to I_{\Gamma}(u)$,  we derive that $I_{\Gamma}(u)=c_\Gamma$, and so, $u \in \Upsilon_\Gamma$.  Thus, for $n$ large enough
$$
\|u_n\|_j > \frac{r\tau}{2} \,\,\,  \text{ and } \, \left| I_{ \lambda_n}(u_n)-c_{\Gamma} \right| \leq \xi, \,\,\, j=1,\cdots, l.
$$
So $ u_n \in {\cal A}_\xi^{\lambda_n} $, which is a contradiction, finishing the proof.
\end{proof}

In the sequel, $\xi_1,\xi^*$ will be defined as
$$
\xi_{1}=\min_{(t_1,\cdots, t_l)\in \partial [r, R]^l}|I_{\Gamma}(\gamma_0 (t_1,\cdots, t_l))-c_{\Gamma}|>0
$$
and
$$
\xi^*=\min\{{\xi_1}/{2}, \delta, {\rho}/{2}\},
$$
where $\delta$ was given in(\ref{A}) and
$$
\rho= 4R^2c_{\Gamma},
$$
where $R$ was fixed in (\ref{ML1}). Moreover, for each $s>0$, ${ B}_{s}^\lambda$ denotes the set
$$
{ B}_{s}^\lambda = \big\{ u \in E_\lambda \, ; \, \|u\|^2_{\lambda} \leq s \big\} \,\,\, \text{for} \,\,\, s>0.
$$

\begin{proposition} \label{P}
Suppose that $0<\mu<3$ and  $2 < p<6-\mu$. Let $\xi \in (0, \xi^*)$ and $ \Lambda_\ast \ge 1 $ given in the previous proposition. Then, for $ \lambda \ge \Lambda_\ast $, there exists a solution $ u_\lambda $ of $ (C_\lambda) $ such that $ u_\lambda \in {\cal A}_\xi^\lambda \cap I_\lambda^{ c_{\Gamma} } \cap { B}_{2\rho+1}^\lambda$.

\end{proposition}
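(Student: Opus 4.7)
I would argue by contradiction, combining a minimax characterisation with a deformation argument tailored to the path class $\Gamma_\ast$.

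\emph{Step 1: a minimax characterisation of $c_\Gamma$.} Define
\[
b_\lambda = \inf_{\gamma \in \Gamma_\ast}\,\max_{(t_1,\ldots,t_l) \in [r,R]^l} I_\lambda\bigl(\gamma(t_1,\ldots,t_l)\bigr).
\]
Since $\gamma_0$ is supported in $\Omega_\Gamma \subset \Omega'_\Gamma$, $I_\lambda(\gamma_0(t)) = I_\Gamma(\gamma_0(t))$, and the strict concavity already exploited in Section~5 gives $\max_{[r,R]^l} I_\lambda(\gamma_0) = I_\Gamma(w) = c_\Gamma$, so $b_\lambda \le c_\Gamma$. Conversely, pick any $\gamma \in \Gamma_\ast$ and apply Lemma~\ref{intersection} to get $t^\ast \in (r,R)^l$ with $I'_{\lambda,\Gamma}(\gamma(t^\ast))\gamma_j(t^\ast)=0$ for every $j \in \Gamma$. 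Using that $\gamma = \gamma_0$ on $\partial[r,R]^l$ and a connectedness argument on each factor one checks $\gamma_j(t^\ast)\ne 0$, so $\gamma(t^\ast) \in \mathcal{M}'_\Gamma$ and $I_{\lambda,\Gamma}(\gamma(t^\ast)) \ge c_{\lambda,\Gamma}$. Decomposing
\[
I_\lambda(u) = I_{\lambda,\Gamma}(u) + \Phi_\Gamma(u) + \frac{1}{2p}\int_{\R^3\setminus\Omega'_\Gamma}\!\!\int_{\R^3\setminus\Omega'_\Gamma}\frac{|u(x)|^p|u(y)|^p}{|x-y|^\mu}\,dy\,dx,
\]
and using condition (b) of $\Gamma_\ast$, we conclude $I_\lambda(\gamma(t^\ast)) \ge c_{\lambda,\Gamma}$, hence $b_\lambda \ge c_{\lambda,\Gamma}$. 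Combined with Lemma~\ref{CONVER}(ii), $b_\lambda \to c_\Gamma$ as $\lambda \to \infty$.

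\emph{Step 2: gradient bound and pseudo-gradient flow.} Assume, for contradiction, that $I_\lambda$ has no critical point in $\mathcal{A}_\xi^\lambda \cap I_\lambda^{c_\Gamma} \cap B_{2\rho+1}^\lambda$. By Proposition~\ref{p1} the functional $I_\lambda$ satisfies $(PS)_c$ for all $c \in [0,c_\Gamma]$ (provided $\Lambda_\ast$ is enlarged if necessary), so combining this absence with Proposition~\ref{derivative estimate} yields $\sigma_\ast>0$ with
\[
\bigl\|I'_\lambda(u)\bigr\|_{E_\lambda^\ast} \ge \sigma_\ast \quad \text{for all } u \in \mathcal{A}_{2\xi}^\lambda \cap I_\lambda^{c_\Gamma}\cap B_{2\rho+1}^\lambda.
\]
Construct a locally Lipschitz pseudo-gradient vector field $Y$ on a neighbourhood of this set, multiply by a cut-off $\chi$ that vanishes outside $\mathcal{A}_{2\xi}^\lambda \cap I_\lambda^{c_\Gamma+\alpha} \cap B_{2\rho+1}^\lambda$ and equals $1$ on $\mathcal{A}_\xi^\lambda \cap I_\lambda^{c_\Gamma}\cap B_{2\rho+1/2}^\lambda$, and integrate the ODE $\dot\eta = -\chi(\eta)Y(\eta)$ to produce a continuous flow $\eta : [0,\infty)\times E_\lambda \to E_\lambda$ along which $I_\lambda$ is non-increasing and drops by at least $\frac{\sigma_\ast^2}{4}\,t$ per unit time on the active region.

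\emph{Step 3: deforming $\gamma_0$ and contradiction.} Set $\gamma_1(t) = \eta(T,\gamma_0(t))$ for $T$ large enough that any point that starts inside $\mathcal{A}_\xi^\lambda$ is pushed below energy $c_\Gamma - 3\xi$. On $\partial[r,R]^l$ we have $I_\lambda(\gamma_0) \le c_\Gamma - \xi_1 < c_\Gamma - 2\xi$, so these points are outside the support of $\chi$, $\gamma_1 = \gamma_0$ on $\partial[r,R]^l$, and condition (a) of $\Gamma_\ast$ holds. Assuming also condition (b) persists (see below), $\gamma_1 \in \Gamma_\ast$ with $\max_{[r,R]^l} I_\lambda(\gamma_1) \le c_\Gamma - \epsilon$ for some $\epsilon = \epsilon(\xi,\sigma_\ast)>0$. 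Taking $\Lambda_\ast$ so large that $c_{\lambda,\Gamma} > c_\Gamma - \epsilon$ for $\lambda \ge \Lambda_\ast$ (possible by Lemma~\ref{CONVER}(ii)), this contradicts the inequality $b_\lambda \ge c_{\lambda,\Gamma}$ proved in Step~1.

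\emph{Main anticipated obstacle.} The delicate point is to guarantee that the deformed path $\gamma_1$ still satisfies condition (b), i.e.\ $\Phi_\Gamma(\gamma_1(t)) \ge 0$ for every $t$, since the flow can a priori transport mass into $\R^3\setminus\Omega'_\Gamma$. I would address this by exploiting that throughout the relevant region $\mathcal{A}_{2\xi}^\lambda$ the norm $\|u\|_{\lambda,\R^3\setminus\Omega'_\Gamma}$ is $\le 2\xi$, so the nonlocal term of $\Phi_\Gamma$ is controlled by the quadratic term via Hardy--Littlewood--Sobolev; choosing $\xi^\ast$ sufficiently small at the outset (which is already permitted), and — if necessary — modifying the pseudo-gradient by a small locally Lipschitz correction that is $I'_\lambda$-compatible yet keeps the $\R^3\setminus\Omega'_\Gamma$ component of the flow small, preserves $\Phi_\Gamma \ge 0$ along the trajectory while retaining the energy decrease.
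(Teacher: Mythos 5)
Your proposal follows essentially the same route as the paper's proof: argue by contradiction, combine the $(PS)$ condition with Proposition \ref{derivative estimate} to get a uniform lower bound on $\|I_\lambda'\|$, deform $\gamma_0$ by a truncated pseudo-gradient flow to push its maximum below $c_\Gamma-\epsilon$, and then use Lemma \ref{intersection} together with the decomposition $I_\lambda(u)\ge I_{\lambda,\Gamma}(u)$ on $\{\Phi_\Gamma(u)\ge 0\}$ to force $c_{\lambda,\Gamma}\le c_\Gamma-\epsilon$, contradicting Lemma \ref{CONVER}(ii). Your anticipated obstacle about condition (b) surviving the deformation is handled automatically in the paper, since $\Phi_\Gamma\ge 0$ is part of the definition of $\mathcal{A}^\lambda_{2\xi}$ and the flow is frozen outside that set (so no correction of the pseudo-gradient is needed); similarly, the nonvanishing of the components at the intersection point is guaranteed for the deformed path by the $\Theta_{2\delta}$ part of the cut-off, which is a cleaner substitute for the connectedness argument you invoke for arbitrary $\gamma\in\Gamma_\ast$.
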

\begin{proof}
  Let $ \lambda \ge \Lambda_\ast $. Assume that there are no critical points of $ I_\lambda $ in $ {\cal A}_\xi^\lambda \cap I_\lambda^{ c_{\Gamma} } \cap { B}_{2\rho+1}^\lambda $. Since $ I_\lambda $ verifies the $ (PS)_d $ condition with $0<d\leq c_{\Gamma}$, there exists a constant $ \nu_\lambda > 0 $ such that
$$
   \big\| I'_\lambda(u) \big\|_{E^{*}_{\lambda}} \ge \nu_\lambda, \text{ for all } u \in {\cal A}_\xi^\lambda \cap I_\lambda^{ c_{\Gamma} } \cap { B}_{2\rho+1}^\lambda.
$$
From Proposition \ref{derivative estimate}, we have
$$
   \big\| I'_\lambda(u) \big\|_{E^{*}_{\lambda}} \ge \sigma_0, \text{ for all } u \in \left( {\cal A}_{ 2 \xi }^{\lambda} \setminus {\cal A}_\xi^{\lambda} \right) \cap I_{\lambda}^{ c_{\Gamma} },
$$
where $ \sigma_0 > 0 $ is small enough and it does not depend on $ \lambda $.
 In what follows,  $ \Psi \colon E_\lambda \to \mathbb R $ is a continuous functional verifying
$$
   \Psi(u) =  1, \text{ for } u \in {\cal A}_{\frac{3}{2} \xi}^\lambda \cap \Theta_\delta \cap B^{\lambda}_{2\rho},
$$
$$
\ \Psi(u) = 0, \text{ for } u \notin {\cal A}_{2 \xi}^\lambda \cap \Theta_{2\delta} \cap B^{\lambda}_{2\rho+1}
$$
and
$$
0 \le \Psi(u) \le 1, \, \forall u \in E_\lambda.
$$
We also consider $ H \colon I_\lambda^{ c_{\Gamma} } \to E_\lambda $ given by
$$
   H(u) =
\begin{cases}
   - \Psi(u) \big\| Y(u) \big\|^{ -1 } Y(u), \text{ for } u \in {\cal A}_{2 \xi}^\lambda \cap B^{\lambda}_{2\rho+1}, \\
   \phantom{- \Psi(u) \big\| Y(u) \big\|^{ -1 } Y(u)} 0, \text{ for } u \notin {\cal A}_{2 \xi}^\lambda \cap B^{\lambda}_{2\rho+1}, \\
\end{cases}
$$
where $ Y $ is a pseudo-gradient vector field for $ I_\lambda $ on $ {\cal K} = \left\{ u \in E_\lambda \, ; \, I'_\lambda(u) \ne 0 \right\} $. Observe that  $ H $ is well defined, once $ I'_\lambda(u) \ne 0 $, for $ u \in {\cal A}_{2 \xi}^\lambda \cap I_\lambda^{ c_{\Gamma} } $. The inequality
$$
   \big\| H(u) \big\| \le 1, \, \forall \lambda \ge \Lambda_* \text{ and } u \in I_\lambda^{ c_{\Gamma} },
$$
guarantees that the deformation flow $ \eta \colon [0, \infty) \times I_\lambda^{ c_{\Gamma} } \to I_\lambda^{ c_{\Gamma} } $ defined by
$$
   \frac{d \eta}{dt} = H(\eta), \ \eta(0,u) = u \in I_\lambda^{ c_{\Gamma} }
$$
verifies
\begin{equation} \label{Monotone}
   \frac{d}{dt} I_\lambda \big( \eta(t,u) \big) \le - \Psi \big( \eta(t,u) \big) \big\| I'_\lambda \big( \eta(t,u) \big) \big\| \le 0,
   \end{equation}
\begin{equation} \label{Less}
	 \left\| \frac{d \eta}{dt} \right\|_\lambda = \big\| H(\eta) \big\|_\lambda \le 1
   \end{equation}
and
\begin{equation} \label{eta}
   \eta(t,u) = u \text{ for all } t \ge 0 \text{ and } u \in I_\lambda^{ c_{\Gamma} } \setminus {\cal A}_{2 \xi}^\lambda \cap B^{\lambda}_{2\rho+1}.
\end{equation}
We study now two paths, which are relevant for what follows: \\

$ \noindent \bullet $ The path $ (t_1,\cdots, t_l) \mapsto \eta \big( t, \gamma_0 (t_1,\cdots, t_l) \big), \text{ where } (t_1,\cdots, t_l)\in [r,R]^l $.

\vspace{0.5 cm}

Since $\xi \in (0,\xi^*)$, we have that
$$
   \gamma_0(t_1,\cdots, t_l) \notin {\cal A}_{2 \xi}^\lambda, \, \forall (t_1,\cdots, t_l) \in \partial [r,R]^l.
$$
and
$$
   I_\lambda \big( \gamma_0(t_1,\cdots, t_l) \big) < c_{\Gamma}, \, \forall (t_1,\cdots, t_l) \in \partial [r,R]^l.
$$
Once $\gamma_0(t_1,\cdots, t_l) \in \Theta_{2\delta}$, $\text{for all}\, (t_1,\cdots, t_l) \in [r,R]^l$, \eqref{eta} gives that
$$
 \eta \big( t, \gamma_0(t_1,\cdots, t_l) \big)|_{\Omega'_j}\not=0, \ \ t\geq0.$$
 Moreover, 	it is also easy to see that
 $$
 \frac{1}{2}\int_{\mathbb{R}^3\setminus \Omega'}\big(|\nabla \eta \big( t, \gamma_0 \big)|^2+(\lambda a(x)+1)|\eta \big( t, \gamma_0 \big)|^2\big)dx-\frac{1}{p}\int_{\mathbb{R}^3\setminus \Omega'}\bigg(\frac{1}{|x|^{\mu}}\ast |\eta \big( t, \gamma_0 \big)|^p\bigg)|\eta \big( t, \gamma_0 \big)|^pdx\ge 0.
 $$
Consequently,
$$
\eta \big( t, \gamma_0(t_1,\cdots, t_l) \big) \in \Gamma_\ast,  \ \ t\geq0.
$$

\vspace{0.5 cm}

$ \noindent \bullet $ The path $ (t_1,\cdots, t_l)\mapsto \gamma_0(t_1,\cdots, t_l), \text{ where } (t_1,\cdots, t_l)  \in [r,R]^l $.

\vspace{0.5 cm}

We observe that
$$
   \text{supp} \big( \gamma_0 (t_1,\cdots, t_l) \big)\subset \overline{\Omega_{\Gamma}}
$$
and
$$
   I_\lambda \big( \gamma_0 (t_1,\cdots, t_l) \big) \text{ does not depend on }  \lambda \ge 1,
$$
for all  $(t_1,\cdots, t_l) \in [r,R]^l $. Moreover,
$$
   I_\lambda \big( \gamma_0 (t_1,\cdots, t_l) \big) \le c_{\Gamma}, \, \forall (t_1,\cdots, t_l) \in [r,R]^l
$$
and
$$
   I_\lambda \big( \gamma_0(t_1,\cdots, t_l) \big) = c_{\Gamma} \text{ if, and only if, } t_j = 1, \,  j=1,\cdots,l.
$$
Therefore
$$
   m_0 = \sup \left\{ I_\lambda(u) \, ; \, u \in \gamma_0 \big( [r,R]^l\big) \setminus A_\xi^\lambda \right\}
$$
is independent of $ \lambda $ and $ m_0 < c_{\Gamma} $. In the following, we suppose that there exists $ K_\ast > 0 $ such that
$$
   \big| I_{ \lambda }(u) - I_{ \lambda}(v) \big| \le K_* \| u-v \|_{ \lambda }, \, \forall u,v \in {\cal B}_{2\rho}^\lambda .
$$
Now, we will prove that
\begin{equation} \label{max estimate}
    \max_{(t_1,\cdots, t_l) \in [r,R]^l} I_\lambda \Big( \eta \big( T, \gamma_0 (t_1,\cdots, t_l) \big) \Big) \leq c_{\Gamma}-\frac{\sigma_0 \xi}{2 K_\ast} ,
\end{equation}
for $ T > 0 $ large. In fact, writing $ u = \gamma_0(t_1,\cdots, t_l) $, $ (t_1,\cdots, t_l) \in [r,R]^l $, if $ u \notin A_\xi^\lambda $, from \eqref{Monotone}, we deduce that
$$
   I_\lambda \big( \eta( t, u ) \big) \le I_\lambda (u) \le m_0, \, \forall t \ge 0,
$$
and we have nothing more to do. And so we assume that $ u \in A_\xi^\lambda $ and set
$$
   \widetilde{\eta}(t) = \eta (t,u),\ \  \widetilde{\nu_\lambda} = \min \left\{ \nu_\lambda, \sigma_0 \right\} \ \ \text{ and }\ \  T = \frac{\sigma_0 \xi}{K_\ast \widetilde{\nu_\lambda}}.
$$
Now, we will discuss two cases: \\

\noindent {\bf Case 1:} $ \widetilde{\eta}(t) \in {\cal A}_{\frac{3}{2} \xi}^\lambda \cap \Theta_\delta \cap B^{\lambda}_{2\rho}, \, \forall t \in [0,T] $.

\noindent {\bf Case 2:} $ \widetilde{\eta}(t_0) \notin {\cal A}_{\frac{3}{2} \xi}^\lambda \cap \Theta_\delta \cap B^{\lambda}_{2\rho}, \text{ for some } t_0 \in [0,T] $. \\

\noindent {\bf Analysis of  Case 1}

In this case, we have $ \Psi \big( \widetilde{\eta}(t) \big) = 1 $ and $ \big\| I'_\lambda \big( \widetilde{\eta}(t) \big) \big\| \ge \widetilde{\nu_\lambda} $ for all $ t \in [0,T] $. Hence, from \eqref{Monotone}, we know
$$
   I_\lambda \big( \widetilde{\eta}(T) \big) = I_\lambda (u) + \int_0^T \frac{d}{ds} I_\lambda \big( \widetilde{\eta}(s) \big) \, ds \le c_{\Gamma} -\int_0^T \widetilde{\nu_\lambda} \, ds,
$$
that is,
$$
   I_\lambda \big( \widetilde{\eta}(T) \big) \le c_{\Gamma} - \widetilde{\nu_\lambda}  T \leq c_{\Gamma} -\frac{\sigma_0 \xi}{2K_\ast},
$$
showing (\ref{max estimate}). \\

\noindent {\bf Analysis of Case 2}:  In this case we have the following situations:
\\

\noindent {\bf (a)}: There exists $T_2 \in [0,T]$ such that $\tilde{\eta}(t_2) \notin \Theta_\delta$.  Let $T_1=0$ it follows that
$$
\|\tilde{\eta}(T_2)-\tilde{\eta}(T_1)\| \geq \delta > \mu,
$$
because $\tilde{\eta}(T_1)=u \in \Theta$. \\
\vspace{0.3 cm}
\noindent {\bf (b)}: There exists $T_2 \in [0,T]$ such that $\tilde{\eta}(T_2) \notin B^{\lambda}_{2\rho}$. Let $T_1=0$, we get
$$
\|\tilde{\eta}(T_2)-\tilde{\eta}(T_1)\| \geq \rho > \mu,
$$
since $\tilde{\eta}(T_1)=u \in B^{\lambda}_\rho$. \\

\noindent {\bf (c)}: \, $\tilde{\eta}(t) \in \Theta_\delta \cap B^{\lambda}_{2\rho}$ for all $t \in [0,T]$, and there are $0 \leq T_1 \leq T_2 \leq T$ such that $\tilde{\eta}(t) \in  {\cal A}_{\frac{3}{2} \xi}^\lambda \setminus {\cal A}_\xi^\lambda$ for all $t \in [T_1,T_2]$ with
$$
|I_\lambda(\tilde{\eta}(T_1))-c_{\Gamma}|=\xi \,\,\, \mbox{and} \,\,\, |I_\lambda(\tilde{\eta}(T_2))-c_{\Gamma}|=\frac{3\xi}{2}
$$
From definition of $K_\ast$, we have
$$
   \|\tilde{\eta}(T_2)-\tilde{\eta}(T_1) \| \ge \frac{1}{K_\ast} \big| I_{ \lambda} (\tilde{\eta}(T_2)) - I_{ \lambda} (\tilde{\eta}(T_1)) \big| \ge \frac{1}{2 K_\ast} \xi,
$$
then the mean value theorem implies that $ T_2-T_1 \ge \frac{1}{2 K_\ast} \xi $. Notice that
$$
   I_\lambda \big( \widetilde{\eta}(T) \big) \le I_\lambda(u) -\int_0^T \Psi \big( \widetilde{\eta}(s) \big) \big\| I'_\lambda \big( \widetilde{\eta}(s) \big) \big\| \, ds,
$$
we can deduce that
$$
   I_\lambda \big( \widetilde{\eta}(T) \big) \le c_{\Gamma} -\int_{T_1}^{T_2} \sigma_0 \, ds = c_{\Gamma} - \sigma_0 (T_2-T_1) \le c_{\Gamma} - \frac{\sigma_0 \xi}{2 K_\ast} ,
$$
which proves (\ref{max estimate}).

So, defining $ \widehat{\eta} (t_1,\cdots, t_l) = \eta \big( T, \gamma_0 (t_1,\cdots, t_l) \big) $, $(t_1,\cdots, t_l) \in [r,R]^l $,  we have that $ \widehat{\eta} \in \Gamma_\ast $ and
$$
  \max_{ (t_1,\cdots, t_l) \in [r,R]^l } I_\lambda \big( \widehat{\eta} (t_1,\cdots, t_l) \big)  \leq c_{\Gamma}-\frac{\sigma_0 \xi}{2 K_\ast},  \,\,\, \forall \lambda \geq \Lambda_*.
$$
On the other hand, we can estimate
$$
\aligned
I_\lambda \big( \widehat{\eta} \big)&=\frac{1}{2} \int_{ \mathbb R^3 } ( \left| \nabla  \widehat{\eta}\right|^{ 2 } + ( \lambda a(x) + 1 ) |  \widehat{\eta}|^{ 2 } )dx - \frac{1}{2p}\int_{ \mathbb R^3}\Big(\frac{1}{|x|^\mu}\ast | \widehat{\eta}|^p \Big) | \widehat{\eta}|^pdx\\
&=\frac{1}{2}\int_{ \Omega_{\Gamma}' } (| \nabla \widehat{\eta} |^{2} +( \lambda a(x) + 1) |\widehat{\eta}|^{2} )dx-\frac{1}{2p} \int_{ \Omega_{\Gamma}' } \Big(\int_{\Omega_{\Gamma}'} \frac{ |\widehat{\eta}|^p}{|x-y|^{\mu}}dy\Big)|\widehat{\eta}|^{p}dx\\
&\hspace{5mm}+\frac{1}{2}\int_{ \mathbb R^3 \setminus \Omega_{\Gamma}' } (| \nabla \widehat{\eta} |^{2} +( \lambda a(x) + 1) |\widehat{\eta}|^{2} )dx-\frac{1}{2p}\int_{ \mathbb R^3 \setminus \Omega_{\Gamma}' }\Big(\frac{1}{|x|^\mu}\ast | \widehat{\eta}|^p\Big) | \widehat{\eta}|^pdx\\
&\hspace{1.5cm}-\frac{1}{2p}\int_{\Omega_{\Gamma}' }\Big(\int_{ \mathbb R^3 \setminus \Omega_{\Gamma}' }\frac{| \widehat{\eta}|^p}{|x-y|^\mu}dy \Big) | \widehat{\eta}|^pdx\\
&\geq I_{\lambda, \Gamma}(\widehat{\eta})+\frac{1}{2}\int_{ \mathbb R^3 \setminus \Omega_{\Gamma}' } (| \nabla \widehat{\eta} |^{2} +( \lambda a(x) + 1) |\widehat{\eta}|^{2} )dx-\frac{1}{p}\int_{ \mathbb R^3 \setminus \Omega_{\Gamma}' }\Big(\frac{1}{|x|^\mu}\ast | \widehat{\eta}|^p\Big) | \widehat{\eta}|^pdx
\endaligned
$$
Since $\widehat{\eta} \in \Gamma_*$, it follows that
$$
\aligned
\frac{1}{2}\int_{ \mathbb R^3 \setminus \Omega_{\Gamma}' } (| \nabla \widehat{\eta} |^{2}& +( \lambda a(x) + 1) |\widehat{\eta}|^{2} )dx-\frac{1}{p}\int_{ \mathbb R^3 \setminus \Omega_{\Gamma}' }\Big(\frac{1}{|x|^\mu}\ast | \widehat{\eta}|^p )\Big) | \widehat{\eta}|^pdx\geq0,
\endaligned
$$
and so,
\begin{equation} \label{Low estimate}
    I_\lambda \big( \widehat{\eta} \big)\geq I_{\lambda, \Gamma}(\widehat{\eta}).
\end{equation}
By \eqref{max estimate} and \eqref{Low estimate}, applying Lemma \ref{intersection}, we have
$$
c_{\lambda,  \Gamma}  \leq \max \left\{ m_0, c_{\Gamma} - \frac{\sigma_0\xi}{2 K_\ast}  \right\} \,\,\, \forall \lambda \geq \Lambda_*,
$$
which leads to
$$
\limsup_{\lambda \to +\infty} c_{\lambda,  \Gamma}  \leq  \max \left\{ m_0, c_{\Gamma} - \frac{\sigma_0\xi}{2 K_\ast} \right\} <  c_{\Gamma},
$$
this contradicts with the conclusion $(ii)$ of Lemma \ref{CONVER}.
\end{proof}

\vspace{.5cm}
\noindent {\bf [Proof of Theorem \ref{main2}: Conclusion]}
From the last Proposition, there exists $(u_{\lambda_n})$ with
$\lambda_n \rightarrow +\infty$ satisfying:
\begin{enumerate}
	\item[(a)] $ I'_{ \lambda_n }(u_{\lambda_n}) = 0, \, \forall n \in \Bbb N $;
	\item[(b)] $ I_{ \lambda_n } (u_{\lambda_n}) \to c_{\Gamma}. $
	\item[(c)] $\left|\left|u_{\lambda_n}\right|\right|_{\lambda_n,\mathbb{R}^N\setminus \Omega'_{\Gamma}}\to 0.$
\end{enumerate}	
Therefore, from of Proposition \ref{(PS) infty condition}, we derive that
$( u_{ \lambda_n } )$ converges in $ H^{1}(\Bbb R^3) $ to a function $ u \in H^{1}(\mathbb{R}^3) $, which satisfies $ u = 0 $ outside $ \Omega $ and $ u_{|_{\Omega_j}} \not= 0, \, j =1,\cdots, l$.
Now, we claim that $u=0$ in $\Omega_j$, for all $j \notin \Gamma$. Indeed, it is possible to prove that there is $\sigma_1>0$, which is independent of $j$, such that if $v$ is a  nontrivial solution of $(C)_{\infty,\Gamma}$, then
$$
\|v\|_{H_0^{1}(\Omega_\Gamma)} \geq \sigma_1.
$$
However, the solution $u$ verifies
$$
\|u\|_{H^{1}(\mathbb{R}^{N} \setminus \Omega_\Gamma)}=0,
$$
showing that $u=0$ in $\Omega_j$, for all $j \notin \Gamma$. This finishes the proof of Theorem \ref{main2}.
\qed
\\
\\
\\

\noindent {\bf ACKNOWLEDGMENTS}
The authors would like to thank  the  anonymous referee
for his/her  useful comments and suggestions which help to improve and clarify the paper greatly.

\vspace{0.2cm}

\end{document}